\newcommand{\R}{\textnormal{I\kern-0.21emR}}
\newcommand{\N}{\textnormal{I\kern-0.21emN}}
\renewcommand{\geq}{\geqslant}
\renewcommand{\leq}{\leqslant}
\def\e{{\varepsilon}}
\def\om{{\overline{m}}}
\def\YYint#1#2#3{{\setbox0=\hbox{$#1{#2#3}{\iint}$}
    \vcenter{\hbox{$#2#3$}}\kern-.51\wd0}}
\newcommand{\corr}[1]{{{\color{black}#1}}}
\newtheorem*{theorem*}{Theorem}
\newtheorem{theorem}{Theorem}
\newtheorem{material}{material}
\newtheorem{proposition}[material]{Proposition}
\newtheorem{corollary}[material]{Corollary}
\newtheorem{definition}[material]{Definition}
\newtheorem{lemma}[material]{Lemma}
\newtheorem{example}[material]{Example}
\newtheorem{remark}[material]{Remark}
\def\O{{\Omega}}
\def\n{{\nabla}}
\def\p{{\varphi}}
\def\TT{{(0,T)\times \O}}
\def\TeT{{(0,T-\varepsilon)\times \T}}
\def\T{{\mathbb T}}
\def\TT{{(0,T)\times \T}}
\def\OT{{(0,T)\times \O}}
 \newcommandx{\unsure}[2][1=]{\todo[linecolor=red,backgroundcolor=red!25,bordercolor=red,#1]{#2}}
 \newcommandx{\change}[2][1=]{\todo[linecolor=blue,backgroundcolor=blue!25,bordercolor=blue,#1]{#2}}
 \newcommandx{\info}[2][1=]{\todo[linecolor=green,backgroundcolor=green!25,bordercolor=green,#1]{#2}}
 \newcommandx{\improvement}[2][1=]{\todo[linecolor=yellow,backgroundcolor=yellow!25,bordercolor=yellow,#1]{#2}}
  \newcommandx{\biblio}[2][1=]{\todo[linecolor=blue,backgroundcolor=magenta!25,bordercolor=blue,#1]{#2}}
 \numberwithin{equation}{section}
\begin{document}

\title{The bang-bang property in some parabolic  bilinear optimal control problems \emph{via}  two-scale asymptotic expansions }


\author{Idriss Mazari }
\date{\today}

\maketitle

\begin{abstract}
We investigate the bang-bang property for fairly general classes of $L^\infty-L^1$ constrained bilinear optimal control problems in {the case of parabolic equations set in the one-dimensional torus.}
 Such a study is motivated by several applications in applied mathematics, most importantly in the study of reaction-diffusion models. 
The main equation {writes} $\partial_t u_m-{\partial^2_{xx}} u_m=mu_m+f(t,x,u_m)$, where $m=m(x)$ is the control, which must satisfy some $L^\infty$ bounds ($0\leq m\leq 1$ a.e.) and an $L^1$ constraint ($\int m=m_0$ is fixed), and where $f$ is a non-linearity that must only satisfy that any solution of this equation is positive at any given time. The functionals we seek to optimise are rather general; they write $\mathcal J(m)=\iint_\TT j_1(t,x,u_m)+\int_\T j_2(x,u_m(T,\cdot))$. Roughly speaking we prove in this article that, if $j_1$ and $j_2$ are increasing, then any maximiser $m^*$ of $\mathcal J$ is bang-bang in the sense that it writes $m^*=\mathds 1_E$ for some subset $E$ of the torus. It should be noted that such a result rewrites as an existence property for a shape optimisation problem. Our proof relies on second order optimality conditions, combined with a fine study of two-scale asymptotic expansions. In the conclusion of this article, we offer several possible generalisations of our results to more involved situations (for instance for controls of the form $m\p(u_m)$ or for some time-dependent controls), and we discuss the limits of our methods by explaining which difficulties may arise in other settings.
\end{abstract}

\noindent\textbf{Keywords:} Bilinear optimal control, reaction-diffusion equations, shape optimisation, qualitative properties of optimisation problems, bang-bang property, shape optimisation.

\medskip

\noindent\textbf{AMS classification:} 35K55,35K57, 49J30, 49K20, 49N99, 49Q10.
\paragraph{Acknowledgment.} This work was partially supported by the French ANR Project ANR-18-CE40-0013 - SHAPO on Shape Optimization and by the Project "Analysis and simulation of optimal shapes - application to life sciences" of the Paris City Hall.

The author warmly thanks G. Allaire, B. Geshkovski, G. Nadin and Y. Privat for scientific exchanges, {as well as the anonymous referee of the paper for his/her insightful comments.}

\paragraph{Contact Information:}CEREMADE, UMR CNRS 7534, Universit\'e Paris-Dauphine, Universit\'e PSL, Place du Mar\'echal De Lattre De Tassigny, 75775 Paris cedex 16, France.

\texttt{mazari@ceremade.dauphine.fr}

\newpage
\tableofcontents
\newpage
\section{Introduction}
\subsection{Scope of the paper, informal presentation of our results}

In this paper, we offer a theoretical analysis of a {class of optimal control problems}, in which one aims at optimising a criteria by acting in a bilinear way on the state of the PDE. Prototypically, the model under consideration reads as follows: for a given non-linearity $f=f(t,x,u)$ and a control $m=m(t,x)$, we let $u_m$ be the solution of 
$$\partial_t u_m-\Delta u_m=mu_m+f(t,x,u_m)$$  with variables $x\in \O$ and $t\in [0,T]$ under certain boundary conditions. For a certain time horizon $T>0$, we aim at optimising criteria of the form 
$$\mathcal J(m)=\iint_\OT j_1(t,x,u_m)+\int_\O j_2(x,u_m(T,\cdot))$$ under some constraints on the control $m$.  Throughout the paper, the constraints on $m$ will be of $L^1$ and $L^\infty$ type; in other words, one constraint takes the form {
\[ \forall t\in[0,T]\,, \int_\O m(t,x)dx=V_0\text{ fixed}\]  or \[\int_\O m(x)dx=V_0 \text{ (if $m$ does not depend on time)},\]}
while the other is 
\[{-\kappa_0}\leq m\leq {\kappa_1}\text{ a.e.}\]
In {this setting}, one of the salient qualitative features of optimisers is the \emph{bang-bang property}. In other words, is it true that any maximiser writes $m^*=\alpha+(\beta-\alpha)\mathds 1_E$ for some measurable subset $E$ of $\O$? This property is linked to (non-)existence results for shape optimisation problems.  There were, in recent years, several fine qualitative studies of this property in the elliptic case; we refer to Section \ref{Se:Bib}. However, in the context of parabolic models and despite the current activity in the study of parabolic bilinear optimal control problems, this property does not seem to be reachable by the available techniques; we refer to sections \ref{Se:Comment} and \ref{Se:Bib}. 

In {this paper}, we prove that, under reasonable assumptions on the non-linearity $f$ that ensure the well-posedness of the parabolic system, and on the cost functions $j_1\,, j_2$ (roughly speaking, they must both be non-decreasing, and one has to be increasing), the bang-bang property holds \emph{if we assume that admissible controls are  constant in time} and that \emph{the domain is one-dimensional}. This is the main contribution of this article. It hinges on the methods of \cite{MNPCPDE}, coupled with two-scale asymptotic techniques  previously used in \cite{MNT} in the context of the optimal control of initial conditions in reaction-diffusion equations. The reason why we tackle the one-dimensional periodic case will be explained later on. It should be noted that we explain in the conclusion how we may cover, with the same type of arguments, higher-dimensional orthotopes. The main explanation behind having to work with  time constant controls is a technical one; this allows to gain further regularity on the solutions of the parabolic PDE under consideration. For this reason, section \ref{Se:TDM} of the conclusion contains a discussion of possible generalisations and obstructions to generalisations; we explain, for instance, how to deal with controls $m$ writing $\sum_{i=1}^N \phi_i(t)m_i(x)$. As a first side comment, it should also be noted that our analysis covers the case of some \emph{tracking-type functionals}. This is not the main topic, and we refer to remark \ref{Re:Functional}. As a second side comment, our analysis can encompass more intricate interactions between the control and the state. For instance, we provide, in section \ref{Se:Interaction}  of the conclusion, a generalisation of our results to the case where the control and the state are coupled \emph{via} a term of the form $m\p(u_m)$ for a large class of $\p$.

\subsection{Main model and result}\label{Se:Model}
\subsubsection{The parabolic equation}
\paragraph{Admissible controls in parabolic models}
{We work} in the (one-dimensional) torus $\T$. In section \ref{ff}, we explain how our methods may extend to the case of higher dimensional tori. 

Regarding the time regularity of admissible resources distribution, we shall make a strong assumption: the admissible controls are  constant in time. The reason is that the method we introduce and develop hinges on fine regularity properties of solutions of the associated evolution equation that can not be obtained in the case where the control $m$ also depends on time. We refer to remark \ref{Re:TDDD} for further comments.

In this setting,  denoting by  $u_m$ the state of the equation and by $m$ the control, the only type of interaction we are interested in is bilinear; in other words, the control appears in the model \emph{via} the term $mu_m$ (see Remark \ref{Re:Interaction} and section \ref{Se:Interaction} for considerations on the case of interactions of the form $m\p(u_m)$). In terms of constraints, we impose two on the controls, an $L^\infty$ and an $L^1$ one.  Each of these constraints has a natural interpretation in different fields of applications. In spatial ecology for instance, one may think of $m$ as a resources distributions, in which case the $L^\infty$ constraint simply models the fact that, at any given point, there can only be a maximum amount of resources available, while the $L^1$ constraint accounts for the limitation of the global quantity of resources involved. For the $L^\infty$ constraints, without loss of generality (we also refer to remark \ref{Re:Constraints}), we shall consider controls satisfying 
\[0\leq m\leq 1\text{ a.e. in } \T.\]
For the $L^1$ constraint, we fix a volume constraint $V_0\in (0;{2\pi})$, and we shall consider controls satisfying 
\[\int_\T m=V_0.\]
This leads to considering the admissible class

\begin{equation}\label{Eq:Adm}\tag{$\bold{Adm}$}
\mathcal M(\T):=\left\{m\in L^\infty(\T):\, 0\leq m\leq 1\text{ a.e. in  }\T\,, \int_\T m=V_0\right\}.
\end{equation}
Of notable interest in $\mathcal M(\T)$ are \emph{bang-bang} functions; as they are the central theme of this paper we isolate their definition here.
\begin{definition}
A function $m\in \mathcal M(\T)$ is called \emph{bang-bang} if there exists $E\subset \T$ such that $m=\mathds 1_E$.
\end{definition} 
 
\paragraph{Nonlinearities under consideration}
Our choice of nonlinearity in the parabolic model also derives from considerations in mathematical biology or chemistry. Namely, we want the solutions not only to exist but to be uniformly bounded (in time) in the $L^\infty$ norm, as well as to enjoy a strong maximum property (in the sense that, starting from a non-zero initial condition, the solution is positive at any arbitrary positive time). The latter is not only important from a modelling point of view but also in the course of the proof, as it influences the monotonicity of the functional under consideration.

As the right hand side of the reaction-diffusion equation writes $mu+f(t,x,u)$ we shall make the following assumptions that guarantee the well-posedness of the ensuing system:
\begin{multline}\label{H1}\tag{$\bold H_1$} f \text{ is $\mathscr C^1$ in time, $L^\infty$ in $x$,  and $\mathscr C^2$ in $u$,}
\\\text{ and, for any $K\in \R$, $\sup_{x\in \T\,, u\in [0,K], t\in [0,K]}\left(\left\vert {\partial_t}f\right\vert+\left\vert {\partial_{u}}f\right\vert+\left\vert {\partial^2_{uu}}f\right\vert \right)<\infty.$}
\end{multline}
Assumption \eqref{H1} {serves to derive the regularity} of the solutions of the equation. The next assumption is used to obtain  upper and lower bounds on the solution:

\begin{multline}\label{H2}\tag{$\bold H_2$}
\text{ $f(\cdot,\cdot,0)\geq 0$, $f(\cdot,\cdot,0)\in L^\infty(\TT)$ and }
\\\text{there exists $\kappa>0$ such that for any  $u\geq \kappa$, for any $t\in \R_+$, for a.e. $x\in \T$,  $f(t,x,u)\leq -u$}.\end{multline}
In the first condition, if we had $f(\cdot,\cdot,0)=0$ this would simply model that when no individuals are present no reaction is happening. Assuming the general inequality allows to consider non-negative source terms (\emph{i.e.} one may take $f(t,x,u)=ug(t,x,u)+y(t,x)$ for a certain $g$ and a non-negative source term $y$). It should be noted that, had we taken $-\kappa_0\leq m\leq \kappa_1$ as $L^\infty$ constraints, the final inequality in \eqref{H2} would rewrite $f(t,x,u)\leq -\sup\{|\kappa_0|\,,|\kappa_1|\}u$. 

Finally, {we make a final assumption, which is seemingly the most restrictive one}, but we comment on it in Example \ref{Ex} and in Remark \ref{Cp}.
\begin{multline}\label{H3}\tag{$\bold H_3$}\text{ $f$ is, uniformly in $(t,x)\in \TT$, uniformly Lipschitz in $u\in \R$: there exists $A$ such that,}\\\text{ for any $(t,x)\in \TT\,, $ for any $u,u'\in \R$, $\left| f(t,x,u)-f(t,x,u')\right|\leq A|u-u'|.$}\end{multline}

\paragraph{Initial condition} We simply take an initial condition independent of $m$, say $u^0$, satisfying 
\begin{equation}\inf_\T u^0> 0\,,  u^0\in \mathscr C^2(\T).\end{equation} 
\paragraph{Parabolic model}
We define, for any $m\in \mathcal M(\T)$, $u_m$ as the unique solution of 
\begin{equation}\label{Eq:MainBil}\begin{cases}
{\partial_{t}}u_m-{\partial^2_{xx}} u_m=mu_m+ f(t,x,u_m)&\text{ in }\TT\,, 
\\ u_m(0,\cdot)=u^0&\text{ in }\T.\end{cases}
\end{equation} 
By \cite[Theorem 5.2, Chapter 1]{Pao} there exists a unique solution $u_m$ of \eqref{Eq:MainBil} (we refer to lemma \ref{Pr:Regularity} for  further regularity information about $u_m$). 
{
\begin{example}[A typical example of such a non-linearity]\label{Ex} While, taking $f\equiv 0$, we recover the standard case of the bilinearly controlled heat equation, let us point to a central example in spatial ecology, that of monostable equations; among them, the most standard one is the logistic-diffusive equation, which  would correspond to taking $f:u\mapsto -u^2$. However, while this non-linearity satisfies \eqref{H1}-\eqref{H2}, it violates the Lipschitz condition of \eqref{H3}. A way to overcome this difficulty is to extend $f$, outside of $(0,\max(||u^0||_{L^\infty},\kappa))$, to a smooth,  globally Lipschitz function that still satisfies \eqref{H2}. Indeed, \eqref{H2} ensures that any solution $u$ of \eqref{Eq:MainBil} remains bounded by  $\max\left(\Vert u^0\Vert_{L^\infty}\,, \kappa\right)$ (see lemma \ref{Le:ubdd}), so that this modification of $f$ outside of $(0;\max\left(\Vert u^0\Vert_{L^\infty}\,, \kappa\right))$ does not change the equation \eqref{Eq:MainBil}. We also refer to Remark \ref{Cp}.
\end{example}
}
\paragraph{Optimisation problem in the parabolic context: time-constant controls}
We consider fairly general functionals that we seek to optimise. To define this functional, we consider two functions $j_1\,, j_2$, a time horizon $T>0$ and we define 
\begin{equation}\mathcal J:\mathcal M(\T)\ni m\mapsto \iint_\TT j_1(t,x,u_m)+\int_\T j_2\left(x,u_m(T,\cdot)\right).\end{equation}
We mentioned earlier the crucial role of the monotonicity of the functional $\mathcal J$, which hinges on that of $j_1$ and $j_2$; we refer to section \ref{Se:Bib} for further comments. We thus assume that $j_1\,, j_2$ satisfy 
{\begin{multline}\tag{$\bold H_{\mathcal J}$}\label{Eq:HJ}
\text{ $j_1$ is $\mathscr C^1$ in $(t,x)$ and $\mathscr C^2$ in $u$,  $j_2$ is $\mathscr C^2$ in $(x,u)$, }
\\\text{$\partial_uj_1\,, \partial_u j_2\geq 0$ in $(0;T)\times \T\times \R_+$ and }\begin{cases}\text{ either $\partial_u j_1>0\text{ in }(0;T)\times \T\times (0;+\infty)$   }
\\\text{or $\partial_u j_2(t,x,\cdot)>0\text{ in }(0;T)\times \T\times (0;+\infty)$,}\end{cases}\\\text{ $ \forall K\in \R_+^*\,, \sup_{(t,x)\in \TT\,, u\in[0,K]}\underset{\alpha=0,1,2}{{\max}} \left|{\partial_t}j_1(t,x,u)\right|+ \left|{\partial^\alpha_{u^\alpha}} j_1(t,x,u)\right|+\left|{\partial^\alpha_{u^\alpha}}j_2(x,u)\right|<\infty, $}
\end{multline}} and we say (with a slight abuse of notation, identifying $\mathcal J$ with $(j_1,j_2)$) that $\mathcal J$ satisfies \eqref{Eq:HJ}.

In particular, we may choose 
\[j_1(t,x,u)=u^\alpha\text{ and }j_2(x,u)=u^\beta\] for $\alpha\,, \beta>0$, or $j_1=\p_1(u)\,, j_2=\p_2(u)$ with $\p_1\,, \p_2$ smooth and non-decreasing and at least one with positive derivative on $\R_+^*$, or $j_1(u)=\psi(x)u\,, j_2(u)=\psi_2(x)u$ with $\psi_1\,, \psi_2\geq 0$ and ${\max}\left(\inf |\psi_1|\,, \inf|\psi_2|\right)>0$.
The parabolic optimisation problem reads 

\begin{equation}\label{Eq:Pv}\tag{$\bold P_{\mathrm{parab}}$}\fbox{$\displaystyle
\max_{m\in \mathcal M(\T)}\mathcal J(m)$}\end{equation}
\begin{remark}[Existence of maximisers]
The existence of a solution of the variational problems \eqref{Eq:Pv} is a standard consequence of the direct method in the calculus of variations.
\end{remark}

\subsubsection{Main result}\label{Se:MainParab}
We state our main result:
\begin{theorem}\label{Th:Bgbg}
Assume $f$ satisfies \eqref{H1}-\eqref{H2}-\eqref{H3}. Assume $\mathcal J$ satisfies \eqref{Eq:HJ}. Any solution $m^*$ of \eqref{Eq:Pv} is bang-bang: there exists $E\subset \T$ such that 
\[m^*=\mathds 1_E.\]
\end{theorem}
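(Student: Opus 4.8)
The plan is to argue by contradiction: assuming a maximiser $m^*$ is not bang-bang, the set $\omega := \{0 < m^* < 1\}$ has positive measure, and I would produce an admissible perturbation along which $\mathcal J$ strictly increases to second order, contradicting optimality. First I would record the structural facts that make the scheme run. By the strong maximum principle and \eqref{H2}, the state $u_{m^*}$ is positive on $\TT$; and the Gâteaux differentiability of $m\mapsto u_m$, granted by the regularity of lemma \ref{Pr:Regularity} (itself the reason for working with time-constant controls), lets me introduce the linearised state $\dot u$ in a direction $h$, solving
\begin{equation*}
\partial_t \dot u - \partial^2_{xx}\dot u = \bigl(m^* + \partial_u f(t,x,u_{m^*})\bigr)\dot u + h\,u_{m^*}, \qquad \dot u(0,\cdot) = 0,
\end{equation*}
together with the adjoint state $p$ solving the backward equation
\begin{equation*}
-\partial_t p - \partial^2_{xx} p = \bigl(m^* + \partial_u f(t,x,u_{m^*})\bigr) p + \partial_u j_1(t,x,u_{m^*}), \qquad p(T,\cdot) = \partial_u j_2(\cdot,u_{m^*}(T,\cdot)).
\end{equation*}
Because $\partial_u j_1,\partial_u j_2\geq 0$ with at least one strictly positive, the maximum principle yields $p>0$ on $(0,T)\times\T$; this positivity, combined with $u_{m^*}>0$, is the engine of the whole argument.

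Next I would extract the first-order information. An integration by parts gives $\mathcal J'(m^*)\cdot h = \int_\T \Psi\, h$ with switch function $\Psi(x) := \int_0^T p(t,x) u_{m^*}(t,x)\,dt$, so by the bathtub principle the optimality of $m^*$ forces $\Psi$ to be constant (equal to the multiplier of the $L^1$ constraint) a.e. on $\omega$. Consequently, for any $h$ supported in $\omega$ with $\int_\T h = 0$ the first variation vanishes, and the necessary second-order condition $\mathcal J''(m^*)[h,h]\leq 0$ must hold for all such admissible $h$. Differentiating twice and using the adjoint to eliminate the second-order state $\ddot u$, I would reduce this quadratic form to
\begin{equation*}
\mathcal J''(m^*)[h,h] = \iint_\TT \bigl(\partial^2_{uu} j_1 + p\,\partial^2_{uu} f\bigr)\dot u^2 + \int_\T \partial^2_{uu} j_2\,\dot u(T,\cdot)^2 + 2\iint_\TT p\,h\,\dot u .
\end{equation*}

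The crux is to exhibit a perturbation making the last (indefinite-looking) cross term dominate with a positive sign. Fix $\delta>0$ small enough that $\omega_\delta := \{\delta < m^* < 1-\delta\}$ still has positive measure, choose a non-trivial profile $g\in L^\infty(\T)$ supported in $\omega_\delta$, and set $h_n(x) := g(x)\cos(nx)$ (corrected by subtracting its mean so that $\int_\T h_n = 0$); for $t$ small these are admissible in both signs, uniformly in $n$, since $|h_n|\leq \|g\|_{L^\infty}$ on $\omega_\delta$. The point is the two-scale behaviour of the linearised state: since $h_n$ oscillates at frequency $n$ while $-\partial^2_{xx}$ damps such frequencies by $n^2$, the ansatz $\dot u_n(t,x)\approx n^{-2}u_{m^*}(t,x)\,g(x)\cos(nx)$ solves the linearised equation to leading order, the leading balance being the cell problem $-\partial^2_{yy}u_1 = g(x)\cos(y)\,u_{m^*}$ in the fast variable $y=nx$, with the time-derivative and reaction terms pushed to lower order. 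Feeding this in, the two $\dot u_n^2$-terms are $O(n^{-4})$ while the cross term is $O(n^{-2})$, and using $\cos^2(nx)\rightharpoonup\tfrac12$,
\begin{equation*}
n^2\,\mathcal J''(m^*)[h_n,h_n] \xrightarrow[n\to\infty]{} \iint_\TT p\,u_{m^*}\,g^2 > 0,
\end{equation*}
the strict positivity coming from $p>0$, $u_{m^*}>0$ and $g\not\equiv 0$. For $n$ large this contradicts $\mathcal J''(m^*)[h_n,h_n]\leq 0$, whence $|\omega|=0$ and $m^*=\mathds 1_E$ with $E=\{m^*=1\}$.

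I expect the main obstacle to be the rigorous justification of the two-scale expansion rather than its formal derivation: one must show that $\dot u_n - n^{-2}u_{m^*}\,g\cos(n\cdot)$ is genuinely of lower order in the norm in which the quadratic form is continuous, and that the oscillating products converge to their two-scale averages. This is precisely where the sharp parabolic regularity of $u_{m^*}$ (hence the time-independence of the control) is needed: it supplies enough smoothness of the coefficients $u_{m^*}$ and $\partial_u f(\cdot,\cdot,u_{m^*})$ to build and estimate the correctors and to control the remainder by energy methods uniformly in $n$. Handling the small defect of the $L^1$ constraint for the mean-corrected $h_n$ and verifying uniform admissibility on $\omega_\delta$ are, by comparison, routine.
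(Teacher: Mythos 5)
Your overall strategy coincides with the paper's: argue by contradiction, use the vanishing of the first variation on $\omega$, pass to the second-order condition with the adjoint state, and defeat it with highly oscillating perturbations analysed by a two-scale expansion. However, two of your specific choices create genuine gaps. First, the perturbation $h_n=g(x)\cos(nx)$ with $g\in L^\infty$ supported in $\omega_\delta$ is \emph{not} a purely high-frequency function: $\omega_\delta$ is only a measurable set of positive measure (possibly with empty interior), so $g$ cannot be taken smooth, its Fourier coefficients decay with no rate, and $g\cos(nx)$ carries low-frequency content of size $o(1)$ but not $O(n^{-2})$ (a mode $\hat g(n-j)$ beats against $\cos(nx)$ to produce frequency $j$). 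Consequently the claimed scaling $\dot u_n=O(n^{-2})$, and hence the hierarchy ``cross term $O(n^{-2})$ versus quadratic terms $O(n^{-4})$'', is unjustified; the sign-indefinite terms $\iint_\TT(\partial^2_{uu}j_1+p\,\partial^2_{uu}f)\dot u_n^2$ may dominate. The mean-correction compounds this: subtracting a constant destroys the support condition in $\omega_\delta$, and localising the correction inside $\omega_\delta$ introduces a low-frequency source of size $|\hat g(n)|$, again without rate. The paper avoids all of this by choosing $h$ in the intersection of the kernels of the functionals $H\mapsto\int_\omega H\cos(kx)$, $H\mapsto\int_\omega H\sin(kx)$ for $k<K$ (a finite-codimension subspace of the infinite-dimensional $L^2(\omega)$), so that $h$ is supported in $\omega$, has zero mean, and has \emph{only} Fourier modes $\geq K$ by construction.

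Second, even granting a purely high-frequency $h$, your plan to read off positivity directly from the cross term $2\iint_\TT p\,h\,\dot u$ requires the remainder estimate $\Vert \dot u-Z\Vert_{L^2}=o(n^{-2})$ when paired against $h$ (which has $L^2$-norm of order one). With $u_{m^*}$ only $W^{2,p}$ in space, the rigorous two-scale expansion (the paper's proposition on $R_K$) only yields a remainder of order roughly $\Upsilon^{-1/2}K^{-2}+\Upsilon^{1/2}K^{-1}$ in $L^2_tW^{1,2}_x$ --- the same order as, or larger than, the main term --- so the error in the cross term is not negligible and its sign is uncontrolled. This is precisely why the paper does \emph{not} estimate the cross term directly: it substitutes $h=(\partial_t\dot u-\partial^2_{xx}\dot u-V_m\dot u)/u_m$ and integrates by parts to rewrite $2\iint_\TT p\,h\,\dot u$ as $2\iint_\TT (p_m/u_m)\vert\partial_x\dot u\vert^2$ plus terms quadratic in $\dot u$ with integrable coefficients (Proposition \ref{Pr:Cruise}). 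The resulting \emph{square} structure only requires $\Vert\partial_x(\dot u-Z_K)\Vert\ll\Vert\partial_x Z_K\Vert\sim K^{-1}$, which the available regularity does deliver. You should adopt both of these devices --- the kernel-intersection construction of $h$ and the integration-by-parts reformulation of the cross term --- for the argument to close.
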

The proof of this theorem is the core of this paper, and takes up the entire section \ref{Se:ProofTh1}.

\paragraph{An application to a spatial ecology problem}
We can apply Theorem \ref{Th:Bgbg} to the following spatial ecology problem. Consider, for any $m\in \mathcal M(\T)$, the logistic-diffusive equation 
\begin{equation}\begin{cases}\label{Eq:LDE}
{\partial_{t}} \theta_m-{\partial^2_{xx}} \theta_m=\theta_m\left(m-\theta_m\right)&\text{ in }\TT\,, 
\\ \theta_m(0,\cdot)=\theta^0\in \mathscr C^2(\T)&\text{ in }\T\,,
\\\inf_\T \theta^0>0. \end{cases}\end{equation} 
{
\begin{remark}\label{Cp}
As noted earlier, we need to ensure that the non-linearity $f:u\mapsto -u^2$ satisfies the assumptions \eqref{H1}-\eqref{H2}-\eqref{H3}. Here, using the maximum principle, we can obtain an \emph{a priori} bound: for any solution $\theta_m$ of \eqref{Eq:LDE}, we have $0\leq \theta_m\leq \max\left(\Vert u^0\Vert_{L^\infty}\,, 1\right)$. Consequently, we can take any smooth, Lipschitz extension $\tilde f$ of the map $f:u\mapsto -u^2$ outside of $(0; \max\left(\Vert u^0\Vert_{L^\infty}\,, 1\right))$, and \eqref{Eq:LDE} remains unaffected by this change of non-linearity.
\end{remark}
}In this context, the control $m$ accounts for a resources distribution available to a population, the density of which is the function $\theta_m$. A relevant query is to solve the optimisation problem 
\begin{equation}\label{Size}\sup_{m\in \mathcal M(\T)}\iint_\TT  \theta_m(t,\cdot)+\int_\T \theta_m(T,\cdot),\end{equation}for some time horizon $T>0$. This problem is the parabolic counterpart of a related elliptic optimisation problem that was intensively studied in the past few years, see  section \ref{Se:Bib} and \cite{DeAngelis2015,Heo2021,InoueKuto,LiangZhang,LouInfluence,Lou2008,Mazari2020,MNPCPDE,MRBSIAP,LouNagaharaYanagida,NagaharaYanagida}. In the elliptic case, the bang-bang property for optimisers was, in particular, a question that drew a lot of attention \cite{Mazari2020,MRBSIAP,NagaharaYanagida} and was only recently settled in \cite{MNPCPDE}. In the parabolic setting, \emph{i.e.} for problem \eqref{Size}, we refer, for instance, to the recent \cite{BINTZ2020} for the derivation of optimality conditions, as well as for some numerical simulations for a close variant of \eqref{Size}. Then, as a  corollary of Theorem \ref{Th:Bgbg} we obtain
\begin{corollary}
Any solution $m^*$ of \eqref{Size} is of bang-bang type.\end{corollary}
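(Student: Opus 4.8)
The plan is to obtain the corollary as an immediate application of Theorem~\ref{Th:Bgbg}, the only work being to recast \eqref{Eq:LDE}--\eqref{Size} in the general form \eqref{Eq:MainBil}--\eqref{Eq:Pv} and to verify the structural hypotheses on the data. First I would expand the reaction term $\theta_m(m-\theta_m)=m\theta_m-\theta_m^2$, identifying \eqref{Eq:LDE} with \eqref{Eq:MainBil} for the nonlinearity $f(t,x,u)=-u^2$ (independent of $(t,x)$), and identifying the objective in \eqref{Size} with $\mathcal J$ in \eqref{Eq:Pv} for the integrands $j_1(t,x,u)=u$ and $j_2(x,u)=u$.

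The single delicate point is that $u\mapsto -u^2$ is only locally Lipschitz, so \eqref{H3} is not satisfied as written. This is exactly the situation discussed in Remark~\ref{Cp}: the comparison principle yields the uniform a priori bound $0\leq\theta_m\leq\max(\Vert\theta^0\Vert_{L^\infty},1)$, so one may replace $f$ by a smooth, globally Lipschitz extension $\tilde f$ agreeing with $-u^2$ on the range actually attained by the solutions, without changing \eqref{Eq:LDE}. For this $\tilde f$, hypothesis \eqref{H1} holds since $\tilde f$ is smooth in $u$ and trivial in $(t,x)$; \eqref{H2} holds since $\tilde f(\cdot,\cdot,0)=0\geq 0$ and, with $\kappa=1$, one has $-u^2\leq -u$ for every $u\geq 1$ (a property the extension can be chosen to retain past the cutoff); and \eqref{H3} holds by construction.

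For the cost functions, I would check \eqref{Eq:HJ} for $(j_1,j_2)=(u,u)$. All regularity requirements are immediate, both integrands being linear in $u$ and constant in $(t,x)$; one has $\partial_u j_1=\partial_u j_2=1\geq 0$ and the required strict positivity holds for either factor since $1>0$ on $(0;T)\times\T\times(0;+\infty)$; the uniform derivative bounds are trivial because $\partial^2_{uu}j_i=\partial_t j_1=0$.

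With all hypotheses verified, Theorem~\ref{Th:Bgbg} applies verbatim and delivers that any maximiser $m^*$ of \eqref{Size} is bang-bang, that is $m^*=\mathds 1_E$ for some $E\subset\T$. I anticipate no genuine obstacle: the entire content is the hypothesis check, and the only non-automatic step, the globally Lipschitz truncation of the nonlinearity, is supplied by the uniform a priori bound of Remark~\ref{Cp}.
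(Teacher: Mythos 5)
Your proposal is correct and follows exactly the route the paper intends: the corollary is obtained by verifying the hypotheses of Theorem \ref{Th:Bgbg} for $f(u)=-u^2$ and $j_1=j_2=u$, with the only delicate point — the failure of the global Lipschitz condition \eqref{H3} — handled by the truncation argument of Remark \ref{Cp} based on the a priori bound $0\leq\theta_m\leq\max\left(\Vert\theta^0\Vert_{L^\infty},1\right)$. Nothing is missing.
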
 The same conclusion holds for the two related problems
\[
\sup_{m\in \mathcal M(\T)}\int_\T \theta_m(T,\cdot)\,, \sup_{m\in \mathcal M(\T)}\iint_\TT \theta_m.\]

 It should be noted that, in \cite{BINTZ2020}, the case $m=m(t,x)$ is also considered. Our results do not hold in this case, as already underlined. We nonetheless refer to section \ref{Se:TDM} for generalisation of Theorem \ref{Th:Bgbg} to some classes of time-dependent controls.

We conclude this section on parabolic models with some remarks .
\paragraph{Some remarks on theorem \ref{Th:Bgbg}}
\begin{remark}[{Two examples of functionals}]\label{Re:Functional}
{Let us discuss two types of functionals that are of particular interest in control theory. The first class corresponds to the situation where the goal is to minimise the distance to a reference configuration at a final time or, in other words, } to solve an optimisation problem of the form \begin{equation}\label{TF}\inf_{m\in \mathcal M(\T)}\int_\T \vert u_m(T,\cdot)-y_{\mathrm{ref}}\vert^2,\end{equation} where $y_{\mathrm{ref}}$ is a target state. We would like to underline that such problems fall in our framework provided $y_{\mathrm{ref}}$ is large enough to ensure that, for any $m\in \mathcal M(\T)$ and any $T>0$, $y_{\mathrm{ref}}>u_m(T,\cdot)$. Indeed, as will be clear throughout the proof, the assumption that $\partial_u j_1(t,x,u)>0$ on $\R_+^*$ can be restricted to \[\forall (t,x)\in \TT\,,\partial_u j_1(t,x,u)>0\text{ in }\left(0,\sup_{m\in \mathcal M(\T)\,, T\in \R_+^*}\Vert u_m(T,\cdot)\Vert_{L^\infty}\right).\] It is the minimum requirement we can make, as we know that the solutions of some tracking-type problems are not bang-bang.

{Similarly,  the second class of functionals, dubbed ``tracking-type" functionals, consists in driving the state of the equation as close as possible to a reference configuration, this time averaging the distance over time. For instance, if we } consider the case of \eqref{Eq:LDE}  with $\theta^0<1$ we want to solve the optimisation problem \[\min_{m\in \mathcal M(\T)}\iint_\TT| \theta_m-1|^2. \]  This optimisation problem is equivalent to 
\[\max_{m\in \mathcal M(\T)} \left(-\iint_\TT| \theta_m-1|^2 \right).\] As, from the maximum principle, $\theta_m<1$, the map $j_1(x,u)=-|1-u|^2$ is increasing and has a positive derivative in $(0;1)$, whence we can apply theorem \ref{Th:Bgbg} to conclude that any minimiser of the initial problem is of bang-bang type. This example immediately generalises to the case where the target $y_{\mathrm{ref}}=1$ is replaced with any target $y_{\mathrm{ref}}\geq 1$ a.e.

 \end{remark}
 
 \begin{remark}[Regarding the $L^\infty$ constraints on $m$]\label{Re:Constraints}
 It should be noted that we may consider constraints of the form 
\[{-\kappa_0}\leq m\leq {\kappa_1}\] for two parameters ${\kappa_0\,, \kappa_1}\in \R$ (possibly non-positive), without changing the conclusions of the theorems. The only difference would be that a bang-bang $m$ would in that case be of the form $m={-\kappa_0}+({\kappa_1}+{\kappa_0})\mathds 1_E$.  Indeed, the proof relies on a second-order derivative argument that still holds in this case, as the key point is that $u_m(t,\cdot)$ is positive whenever $t>0$. For an interaction of the type $mu_m$ with a sign-changing $m$, this is still the case.
 \end{remark}
 
 \begin{remark}[Regarding the bilinearity of the control]\label{Re:Interaction}
 It is also worth noting that our method is flexible enough to cover the case of other interactions, of the form $m\p(u_m)$, for suitable non-linearities $\p$. In the conclusion, see theorem \ref{Th:BgbgP}, section \ref{Se:Interaction}, we give a sketch of proof for a version of theorem \ref{Th:Bgbg} for this type of interactions. The main condition on $\p$ to ensure that the bang-bang property holds is that $\p'/\p>0$ in $\R_+^*$, which is still sufficient to cover the case of the optimisation of the carrying capacity, where one works with the equation $\partial_t y_m-{\partial^2_{xx}} y_m=y_m(1-my_m)$. This last example is motivated by \cite{DeAngelis2015}.
 \end{remark}

\begin{remark}[Regarding the time dependency of the control]\label{Re:TDDD}
Our method also allows to cover a form of time-dependency of the control. If we consider, rather than $m(x)u_m(t,x)$, an interaction of the form $ u_m(t,x)\sum_{i=1}^N \phi_i(t)m_i(x)$, where the $\phi_i$ are bounded, smooth functions of time, then the bang-bang property holds. We refer to theorem \ref{Th:BgbgN} , section \ref{Se:TDM}. In the conclusion, see section \ref{Se:TimeDependent}, we explain the difficulty in obtaining the same results for general time-dependent controls. 

\end{remark}

\begin{remark}[Regarding the one-dimensional setting]
The reason we work in the one-dimensional torus is a technical one; while the dimension of the torus is not problematic (see section \ref{ff} of the conclusion), the space periodicity enables us to carry out rigorous two-scale expansions in the following setting: a key part of the proof is the study of the solution $\dot u$ of an equation of the form 
\[ \partial_t \dot u-{\partial^2_{xx}} \dot u=V(t,x)\sum_{k\gg1}\alpha_k \cos(kx),\] and we study $\dot u$ by providing an explicit expansion  as all the Fourier {indices} $k\gg 1$ are very large. While, in more general domains, we may replace the right-hand side in the equation above with $\sum_{k\gg 1} \alpha_k \psi_k(x)$ for some eigenfunctions $\psi_k$ of the {Laplacian}, it is not yet clear how we may reach the necessary conclusion.
\end{remark}

\subsection{Comments on the proof of Theorem \ref{Th:Bgbg}}\label{Se:Comment}

{The} starting point of our method is an idea we introduced in \cite{MNPCPDE} in the elliptic context. 
{Let us briefly recap the context of \cite{MNPCPDE}. Consider an elliptic bilinear optimal control problem of the form \[\sup_{m\in \mathcal M(\T)}\int_\T j(z_m)(=J_{\mathrm{ell}}(m))\]} where $\mathcal M(\T)$ is defined in \eqref{Eq:Adm}, subject to {the elliptic equation \[-\Delta z_m=mz_m+g(x,z_m).\] }We proved that, under suitable assumptions on the non-linearity $g$ and provided $j$ was increasing, any optimiser $m^*_{\mathrm{ell}}$ was a bang-bang function. {In order to do so, we argued by contradiction, picking a non bang-bang maximiser $m^*_{\mathrm{ell}}$, so that the set $\omega^*:=\{0<m^*_{\mathrm{ell}}<1\}$ has positive measure. This implies that for any admissible perturbation $h$ at $m^*_{\mathrm{ell}}$ supported in $\omega^*$, the first-order Gateaux-derivative $\dot J_{\mathrm{ell}}(m^*_{\mathrm{ell}})[h]$ at $m^*_{\mathrm{ell}}$ in the direction $h$ satisfies 
\[ \dot J_{\mathrm{ell}}(m^*_{\mathrm{ell}})[h]=0.\] We then used a second-order argument. Structurally, if $z_m>0$ and if $j'>0$ is increasing we can prove the existence of two constants $\alpha\,, \beta>0$ such that the second order derivative $\ddot{J}_{{\mathrm{ell}}}(m)[h,h]$ of $J_{\mathrm{ell}}$ at an admissible control $m$ in the direction of a perturbation $h$ satisfies the lower estimate
\begin{equation}\label{eez}\ddot{J}_{{\mathrm{ell}}}(m)[h,h]\geq \alpha \int_\T |\n \dot z_m|^2-\beta\int_\T \dot z_m^2,\end{equation} where $\dot z_m$ is the derivative of the map $m\mapsto z_m$ at $m$ in the direction $h$. What matters is that $\dot z_m$ solves a linear equation of the form
\[ \mathcal L_m\dot z_m(:=-\Delta \dot z_m-V_mz_m)=hz_m\] where $V_m=\partial_z g(x,z_m)$ is a bounded potential. Finally, to conclude on \eqref{eez}, we note that elliptic regularity is crucial in deriving it, and is much easier in the elliptic case than in the parabolic one under consideration here.

Going back to the proof of the bang-bang property, and recalling that by contradiction we consider a non bang-bang optimiser $m^*_{\mathrm{ell}}$, since, for any perturbation $h$ supported in $\omega=\{0<m^*_{\mathrm{ell}}<1\}$, $\dot J(m^*_{\mathrm{ell}})[h]=0$, it suffices to build a perturbation $h$ supported in $\omega$ and such that 
\[ \ddot J_{\mathrm{ell}}(m^*)[h,h]>0.\]
 Given the bound \eqref{eez}, it is enough to find a suitable $h$ supported in $\omega$ such that $\int_\T|\n \dot z_{m_{\mathrm{ell}}^*}|^2\gg \int_\T \dot z_{m_{\mathrm{ell}}^*}^2$. We then use the {eigenpairs} $\{\lambda_k,\psi_k\}_{k\in \N}$ of the operator $\mathcal L_{m_{\mathrm{ell}}^*}$ to build an $h$ such that, in the eigenfunction basis, the only non-zero Fourier modes of $\dot z_{m_{\mathrm{ell}}^*}$ are high order Fourier modes:\[\dot z_m=\sum_{k\geq K}\alpha_k \psi_k\text{ for some large integer $K$}.\] This leads to 
 \[ \int_\T|\nabla \dot z_m|^2\geq \lambda_K \int_\T \dot z_m^2.\] Taking $K$ large enough (depending on $\alpha\,, \beta$) we can then reach a contradiction. In other words, the crux of the matter was the construction and analysis of highly oscillating perturbations.}

%

In the case of parabolic equations, obtaining a lower order estimate {akin to \eqref{eez}} on the second order derivative of the functional requires some time regularity on the controls $m$ (hence the assumption that $m$ does not depend on time in theorem \ref{Th:Bgbg}). However, even with such an estimate at hand, the problem is still not solved. Indeed, in the parabolic case, the structure of the equation on $\dot u_m$ (the Gateaux-derivative of the map $m\mapsto u_m$ at $m$ in an admissible direction $h$) is rather of the form \[\partial_t\dot u_m-{\partial^2_{xx}} \dot u_m=V_m(t,x)\dot u_m+hu_m(t,x),\] with a time-varying potential $V_m$, and the lower estimate assumes the form
\[ \ddot{\mathcal J}(m)[h,h]\geq \alpha \iint_\TT |{\partial_x} \dot u_m|^2-\beta\iint_\TT \dot u_m^2-\gamma \int_T \dot u_m(T,\cdot)^2.\]
We refer to estimate \eqref{Eq:EstDDJ}, proposition \ref{Pr:Cruise}, for a precise statement.{This is the first crucial step of the proof.} {Thus it is natural to try and argue by contradiction, considering a non bang-bang optimiser $m^*$, so that $\omega=\{0<m^*<1\}$ has positive measure, and finding a perturbation $h$ supported in $\omega$ and satisfying 
\[\iint_\TT |\partial_x \dot u_m|^2\gg \iint_\TT \dot u_m^2+\int_\T \dot u_m(T,\cdot)^2.\]
}
 But even in the one-dimensional case, finding a perturbation $h$ such that, for a fixed (and large) integer $K\in \N$, we have $\dot u_m=\sum_{k\geq K}\alpha_k\phi_k(t)\cos(kx)$ proves impossible because the potential $V_m$ varies in time. Thus was have to resort to some two-scale asymptotic expansions in order to attain an approximation \[\dot u_m\approx\sum_{k\geq K}\alpha_k\phi_k(t)\cos(kx)\] that is strong enough, see proposition \ref{Pr:Rosecroix}.{This is the second major part of the proof and takes up most of the article. It}  is in part inspired by \cite{MNT} and by seminal works on two-scale expansions \cite{Allaire}, but requires some fine improvements to be better suited to our needs. The need for these improvements essentially comes from the fact that the potential $V_m$ is merely $W^{2,p}$ in space, and not $\mathscr C^2$, as is customary in such queries.

\subsection{Relationship with some shape optimisation problems}\label{Se:SO}
There is another possible outlook on theorem \ref{Th:Bgbg}  that relates {its} conclusions to (non-)existence results for PDE constrained shape optimisation problems.  For any subset $E\subset \T$, we may define the shape functional 
\[ \mathcal F(E):=\mathcal J(\mathds 1_E),\] and investigate the shape optimisation problem 
\begin{equation}\label{Eq:PvSO}
\sup_{E\subset \T\,, \mathrm{Vol}(E)=V_0} \mathcal F(E).\end{equation}
For this type of optimisation problems, it is usually very difficult to obtain an existence property. The most general result is that of Buttazzo and {Dal Maso}, in the seminal {paper} \cite{BDM}, which states that, if the functional $\mathcal F$ is increasing with respect to the set inclusion, and is moreover upper semi-continuous for the $\gamma$-convergence of sets, then an optimal set $E^*$ exists. In theorem \ref{Th:Bgbg}, the monotonicity (which in turn hinges on that of $(j_1,j_2)$) plays a crucial role, and we prove that under assumption \eqref{Eq:HJ}, $\mathcal J$ is indeed increasing. However, $\mathcal F$ is not continuous for the $\gamma$-convergence of sets, which thus prevents using the result of \cite{BDM}. This is well-known, and usually leads to considering  the relaxation of the class of admissible sets $\mathcal A:=\{\mathds 1_E\,, E\subset \T\,, \mathrm{Vol}(E)=V_0\}$ in the weak $L^\infty-*$ topology; this relaxation exactly corresponds to \eqref{Eq:Adm}, and the relaxed version of \eqref{Eq:PvSO} is \eqref{Eq:Pv}. In this way, theorem \ref{Th:Bgbg} states that every solution of \eqref{Eq:Pv} belongs to $\mathcal A$, and, consequently, that \eqref{Eq:PvSO} has a solution. 

This remark was also one of the motivation for the present work, which continues a series of papers devoted to establishing existence results for some shape optimisation problems, see \cite{MPRobin,MNPCPDE}.

\subsection{Bibliographical references and comments}\label{Se:Bib}

Bilinear optimal control problems and the optimal control of semilinear parabolic models are present in a very large number of fields of applied mathematics. It is impossible to give an exhaustive list of contributions, but we single out a few that we think are closely related to our queries.
\subsubsection{Elliptic bilinear optimal control problem}

\paragraph{Spectral optimisation problems}Let us begin with a spectral optimisation problem. In this setting, one aims at minimising the first eigenvalue $\lambda(m)$ of the operator $-\Delta-m$ in a smooth bounded domain $\O\subset \R^d$, endowed with certain boundary conditions, with respect to admissible controls $m$ that satisfy $L^\infty$ and $L^1$ constraints. The reason this problem is bilinear is that the state equation assumes the form 
\[-\Delta z_m=mz_m+\lambda(m)z_m.\]
The study of the minimisation problem of $\lambda(m)$ with respect to $m$ originates in spatial ecology consideration see \cite{CC1,CantrellCosner1,MR1105497,CC4,CantrellCosner,31806255e7f648d5b65ff02c30c4f539,CaubetDeheuvelsPrivat,JhaPorru,KaoLouYanagida,LamboleyLaurainNadinPrivat,MazariThese,MNPARMA,Shigesada} and the references therein. For such problems, the bang-bang property is usually immediate \cite{KaoLouYanagida} and can be deduced from the concavity of the functional at hand or from classical tools such as the bathtub principle. Similarly, following \cite{BHR}, the geometric properties of optimisers have been thoroughly analysed, and are by now well understood; the main tool for this query is that of rearrangements, and a key point is that the functional is energetic. We refer to \cite{LamboleyLaurainNadinPrivat} for up to date results in this direction, as well as to the survey \cite{MNPChapter}.  

\paragraph{A non-energetic elliptic bilinear optimal control problem}
A problem which displays the rich behaviour of elliptic bilinear optimal control problems is that of the total population size in logistic-diffusive equations. In this setting, the PDE writes
\[-\mu\Delta \theta_m=\theta_m(m-\theta_m)\,, \theta_m\geq 0\,, \theta_m\neq 0\] with Neumann or Robin boundary conditions. The control $m$ is assumed to satisfy $L^\infty$ and $L^1$ constraints. The functional to optimise is $J:m\mapsto \int_\O \theta_m$. For modelling issues, we refer to \cite{LouInfluence} and the references therein. Obtaining the bang-bang property for its maximisers is surprisingly difficult. In \cite{Mazari2020,NagaharaYanagida} this bang-bang property is proved under several restrictive assumptions. In \cite{MNPCPDE}, we introduced a new method to prove this property without these {assumptions}. We refer to section \ref{Se:Comment} above to see why the method of \cite{MNPCPDE} does not apply in the parabolic context. Regarding the geometric features of optimisers, it was proved in \cite{Heo2021,MRBSIAP} that the $BV$-norm of optimisers blows up as $\mu \to 0^+$; in \cite{MNPCPDE}, this blow-up rate is quantified. It would be very interesting, in the context of parabolic models, to obtain such qualitative information about the geometry of maximisers.

\subsubsection{Optimal bilinear control of parabolic equations} Since we adopt, in the present paper, the point of view of optimal control, we merely indicate that there is a branch of research devoted to the question of \emph{bilinear controllability} (\emph{i.e.} is it possible to reach an exact state using a bilinear control ?); we refer the interested reader to  \cite{Alabau,Beauchard2010,Floridia} and the references therein.  In the field of bilinear optimal control problems, let us first point to \cite{Fister,GuillnGonzlez2020} for the study of bilinear control problems in connection with chemotaxis or chemorepulsion; another very interesting example of such a problem is studied in \cite{2021}. In it, the authors study an optimal control problem for brain tumor growth. Although their bilinear control only depends on time (\emph{i.e.} their function $m$ satisfies $m=m(t)$, which is exactly the type of case not covered in the present contribution), some emphasis is put, through numerical simulations, on the bang-bang property. 

A very relevant reference for the type of problems we are studying is the recent \cite{BINTZ2020}, in which the exact problem of optimisation of the total population size for parabolic logistic-diffusive equations is studied under the same type of constraints we have here. The optimality conditions are derived, and several numerical simulations are carried out.

\section{Proof of {Theorem} \ref{Th:Bgbg}}\label{Se:ProofTh1}
We break this section down in several parts: first, we give a basic positivity estimate on $u_m$. Then we compute the first and second order Gateaux derivative of the criterion by the use of an adjoint state. Moreover, we give all the regularity information that is needed, and we use them to obtain a lower estimate on the second-order derivative. Finally, we provide a fine analysis of this second order derivative using two scale asymptotic expansions.

We first have the following basic estimate on $u_m$:
\begin{lemma}\label{Le:ubdd}{Let $\kappa$ be given by Assumption \eqref{H2}}.
There holds
\[\forall m\in \mathcal M(\T)\,,  0<\inf_{ \TT}u_m\leq \Vert u_m\Vert_{L^\infty(\TT)}\leq \max\{\Vert u_0\Vert_{L^\infty},\kappa\}.\]
\end{lemma}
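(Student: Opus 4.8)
The plan is to derive both the upper and the lower bound from the parabolic comparison principle, which is available here because the full reaction term $F(t,x,u):=mu+f(t,x,u)$ is, thanks to $m\in L^\infty$ and \eqref{H3}, globally Lipschitz in $u$ uniformly in $(t,x)$; combined with the regularity of $u_m$ (the solution being at least a strong solution, cf. \cite[Theorem 5.2, Chapter 1]{Pao} and lemma \ref{Pr:Regularity}), this makes ordered sub/supersolutions comparable. The strategy is thus to produce an explicit constant supersolution and an explicit exponentially decaying subsolution.

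For the upper bound I would take the constant $M:=\max\left(\Vert u^0\Vert_{L^\infty},\kappa\right)$ as a supersolution. Since $M\geq \kappa$, the second part of \eqref{H2} gives $f(t,x,M)\leq -M$, while $m\leq 1$ gives $mM\leq M$; hence $F(t,x,M)=mM+f(t,x,M)\leq M-M=0=\partial_t M-\partial^2_{xx}M$, so $M$ is a supersolution. As $M\geq \Vert u^0\Vert_{L^\infty}\geq u_m(0,\cdot)$, the comparison principle yields $u_m\leq M$ on $\TT$.

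For the strict lower bound I would combine the first part of \eqref{H2} with \eqref{H3}: for every $u\geq 0$ one has $f(t,x,u)\geq f(t,x,0)-A|u|\geq -Au$. This suggests the (positive) subsolution $\underline u(t):=\left(\inf_\T u^0\right)e^{-At}$. Indeed $\partial_t\underline u-\partial^2_{xx}\underline u=-A\underline u$, whereas, since $m\geq 0$, $F(t,x,\underline u)=m\underline u+f(t,x,\underline u)\geq -A\underline u$, so $\underline u$ is a subsolution with $\underline u(0)=\inf_\T u^0\leq u^0=u_m(0,\cdot)$. The comparison principle then gives $u_m(t,x)\geq \left(\inf_\T u^0\right)e^{-At}\geq \left(\inf_\T u^0\right)e^{-AT}>0$ on $\TT$, whence $\inf_\TT u_m>0$. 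The middle inequality $\inf u_m\leq \Vert u_m\Vert_{L^\infty(\TT)}$ is trivial, which closes the chain.

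The only genuine point requiring care — the main, if modest, obstacle — is the rigorous justification of the comparison principle in this semilinear setting. I would reduce it to the linear case by the usual device: setting $w:=\underline u-u_m$ (respectively $u_m-M$) and writing the difference $f(t,x,\underline u)-f(t,x,u_m)=c(t,x)w$ with $\Vert c\Vert_{L^\infty}\leq A$ by \eqref{H3}, so that $w$ satisfies a linear parabolic inequality $\partial_t w-\partial^2_{xx}w\leq (m+c)w$ with $w(0,\cdot)\leq 0$. Grönwall's lemma — or the weak maximum principle applied to $e^{-\Lambda t}w$ for $\Lambda$ large enough — then forces $w\leq 0$, the periodicity of $\T$ removing any boundary contribution.
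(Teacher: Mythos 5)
Your proof is correct, and for the upper bound it is essentially the paper's argument in different clothing: the paper sets $z_m:=u_m-\max\{\Vert u^0\Vert_{L^\infty},\kappa\}$ and derives $\partial_t z_m-\partial^2_{xx}z_m\leq (m+A)|z_m|$ with $z_m(0,\cdot)\leq 0$, which is exactly the statement that your constant $M$ is a supersolution, concluded by the same linearise-and-Gr\"onwall device you sketch at the end. The genuine difference is in the lower bound. The paper proceeds in two steps: it first proves $u_m\geq 0$ by testing the inequality $\partial_t u_m-\partial^2_{xx}u_m\geq mu_m-A|u_m|$ against the negative part $u_m^-$ (a weak maximum principle in energy form), and then invokes the strong maximum principle to upgrade non-negativity to $\inf_\TT u_m>0$. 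You instead exhibit the explicit barrier $\underline u(t)=(\inf_\T u^0)e^{-At}$, which exploits the standing hypothesis $\inf_\T u^0>0$ together with $f(\cdot,\cdot,0)\geq 0$ and \eqref{H3}, and yields the quantitative bound $u_m\geq (\inf_\T u^0)e^{-AT}$ in one stroke, with no recourse to the strong maximum principle and with the uniformity over $\TT$ built in rather than deduced from continuity and compactness. Your route is slightly more elementary and more explicit; the paper's route has the (unneeded here) advantage of covering initial data that are merely non-negative and non-trivial, since positivity for $t>0$ then comes from the strong maximum principle alone. Your reduction of the semilinear comparison principle to a linear parabolic inequality with a coefficient $c(t,x)$ bounded by $A$ is precisely the justification the paper's computations rely on implicitly, so no gap remains.
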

As this lemma is a straightforward consequence of the maximum principle, we prove it in appendix \ref{Ap:ubdd}.

\subsection{Computation of first and second order Gateaux derivatives \emph{via} an adjoint state}
In this section we analyse the {first and second order} Gateaux derivatives of the {functional}, and then comment on its monotonicity.
It is standard to see that the map 
$$m\mapsto u_m$$ is  twice Gateaux differentiable. For a given $m\in \mathcal M(\T)$ and an admissible\footnote{\label{footnotehAdm}The wording ``admissible perturbation'' means that $h$ belongs to the tangent cone to the set {$\mathcal M(\T)$} at $\beta$.
It corresponds to the set of functions $h\in L^\infty(\O)$ such that, for any sequence of positive real numbers $\varepsilon_n$ decreasing to $0$, there exists a sequence of functions $h_n\in L^\infty(\O)$ converging in $L^2(\O)$ to $h$ as $n\rightarrow +\infty$, and $\beta+\varepsilon_nh_n\in{\mathcal M(\T)}$ for every $n\in\N$.} perturbation $h$ at $m$, we call $\dot u_m[h]$ (resp. $\ddot u_m[h]$) the first (resp. the second) order Gateaux derivative in the direction $h$. When no ambiguity is possible, we use the notation $\dot u_m$ (resp. $\ddot u_m$) for the first (resp. second) order Gateaux derivative at $m$ in the direction $h$.  It is straightforward to see that $\dot u_m$ solves 
\begin{equation}\label{Eq:Dotu}
\begin{cases}
{\partial_{t}}\dot u_m-{\partial^2_{xx}} \dot u_m-\dot u_m\left(m+\left.{\partial_u}f\right|_{u=u_m}\right)=h u_m&\text{ in }\TT,
\\ \dot u_m(0,\cdot)\equiv 0 &\text{ in }\T.
\end{cases}
\end{equation}
Similarly we obtain, for the first-order Gateaux derivative of $\mathcal J$ at $m$ in the direction $h$ the expression 
\begin{equation}\dot{\mathcal J}(m)[h]=\iint_\TT \dot u_m\left.{\partial_u}j_1\right|_{u=u_m}+\int_\T \dot u_m\left.{\partial_u}j_2\right|_{u=u_m(T,\cdot)}.\end{equation}
We define 
\begin{equation}V_m:=m+\left.{\partial_u}f\right|_{u=u_m}{\in L^\infty(\TT)\cap L^2(0,T;L^2(\T))}.\end{equation} and introduce the function $p_m$ as the solution of the backward parabolic equation 

\begin{equation}\label{Eq:Adjoint}\begin{cases}
{\partial_{t}}{ p_m}+{\partial^2_{xx}} p_m+V_m p_m=-\left.{\partial_u}j_1\right|_{u=u_m}&\text{ in }\TT\,, 
\\ p_m(T,\cdot)=\left.{\partial_u}j_2\right|_{u=u_m} &\text{ in }\T.
\end{cases}\end{equation}
 Multiplying \eqref{Eq:Adjoint} by $\dot u_m$ and integrating by parts, we obtain
 \begin{equation}\label{Eq:WDJ}
\dot{\mathcal J}(m)[h]= \int_\T\left.{\partial_u}j_2\right|_{u=u_m(T,\cdot)}\dot u_m+\iint_\TT \left.{\partial_u}j_1\right|_{u=u_m}\dot u_m=\iint_\TT h u_m p_m.\end{equation}
Let us now comment on the monotonicity of the functional, which shall play a crucial role in the forthcoming analysis. Of course, none of the computations above require that $h$ be admissible, and we may take, for $h$, a non-negative function, as the constraints (and hence the admissibility of $h$) only play a role in the derivation of optimality conditions. By monotonicity we mean the following property: 
\[\forall m\in \mathcal M(\T)\,, \forall h\in L^\infty(\O)\,, h\geq 0\Rightarrow \dot{\mathcal J}(m)[h]\geq0.\]
Given \eqref{Eq:WDJ} and lemma \ref{Le:ubdd}, this monotonicity property actually holds if $p_m$ itself is positive. This is where assumption \eqref{Eq:HJ} comes into play:
\begin{lemma}\label{Le:RegAdjoint}If $\mathcal J$ satisfies \eqref{Eq:HJ} then {the solution $p_m$ of \eqref{Eq:Adjoint} satisfies }
$p_m\in W^{2,2}(\TT)$ and, for any $\e>0$, 
$$\inf_{[0,T-\e]\times \T}p_m>0.$$
\end{lemma}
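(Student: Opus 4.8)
The plan is to prove Lemma \ref{Le:RegAdjoint} in two stages: first establishing the $W^{2,2}(\TT)$ regularity of $p_m$, and then deriving the strict positivity on $[0,T-\e]\times \T$ via the strong maximum principle.

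\paragraph{Regularity.} First I would observe that the adjoint equation \eqref{Eq:Adjoint} is a backward parabolic equation, which, after the time reversal $s = T-t$, becomes a standard forward parabolic equation of the form $\partial_s q - \partial^2_{xx} q - V_m q = \left.\partial_u j_1\right|_{u=u_m}$ with initial datum $q(0,\cdot) = \left.\partial_u j_2\right|_{u=u_m(T,\cdot)}$. The key points are that, by Lemma \ref{Le:ubdd}, $u_m$ is bounded in $L^\infty(\TT)$ with values in a compact interval $[0,K]$ where $K = \max\{\Vert u_0\Vert_{L^\infty},\kappa\}$; hence by \eqref{H1} the potential $V_m = m + \left.\partial_u f\right|_{u=u_m}$ lies in $L^\infty(\TT)$, and by \eqref{Eq:HJ} the right-hand side $\left.\partial_u j_1\right|_{u=u_m}$ is bounded. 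The terminal datum $\left.\partial_u j_2\right|_{u=u_m(T,\cdot)}$ is bounded since $j_2$ is $\mathscr C^2$ and $u_m(T,\cdot)$ takes values in $[0,K]$. With a bounded right-hand side, a bounded zeroth-order coefficient, and a bounded initial datum, standard $L^p$ parabolic regularity (Pao, or Ladyzhenskaya–Solonnikov–Uraltseva) yields $p_m \in W^{2,2}(\TT)$; one bootstraps by first obtaining $p_m \in L^2$, then absorbing $V_m p_m$ into the right-hand side to gain $W^{2,1}_2$ regularity in the parabolic sense, which on a bounded cylinder gives the stated $W^{2,2}(\TT)$ bound.

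\paragraph{Positivity.} For the strict positivity I would again work with the forward-in-time reformulation $q(s,x)=p_m(T-s,x)$. The assumption \eqref{Eq:HJ} guarantees $\partial_u j_1 \geq 0$ and $\partial_u j_2 \geq 0$, so $q$ solves a forward parabolic equation with a non-negative source term and a non-negative initial datum, whence $q \geq 0$ by the maximum principle. To upgrade this to the \emph{strict} inequality on $[0,T-\e]\times \T$, I would invoke the strong maximum principle for parabolic operators with bounded potential: since by the dichotomy in \eqref{Eq:HJ} either $\partial_u j_1 > 0$ on $(0,T)\times\T\times(0,\infty)$ or $\partial_u j_2 > 0$ there, and since $u_m(t,\cdot)>0$ for all $t>0$ by Lemma \ref{Le:ubdd}, at least one of the data forcing $q$ is strictly positive on a set of positive measure. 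The strong maximum principle then propagates this positivity instantaneously, forcing $q > 0$ in the open parabolic interior; the uniform lower bound $\inf_{[0,T-\e]\times\T} p_m > 0$ on the compact set away from the terminal time $t=T$ (equivalently away from $s=0$) follows because on $[\e, T]$ in the $s$-variable the solution is continuous and strictly positive on a compact set, hence bounded below by a positive constant.

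\paragraph{Main obstacle.} The technical heart is handling the two cases of the dichotomy in \eqref{Eq:HJ} cleanly, since the positivity mechanism differs: when $\partial_u j_1 > 0$ the positive source drives $q$ up for all $s>0$, whereas when only $\partial_u j_2 > 0$ the positivity enters solely through the initial datum $q(0,\cdot)$ and must be propagated by the parabolic smoothing. In the latter case one must be careful that $u_m(T,\cdot)>0$ (Lemma \ref{Le:ubdd}) makes $\partial_u j_2|_{u=u_m(T,\cdot)}$ strictly positive everywhere on $\T$, so $q(0,\cdot)>0$ uniformly and the strong maximum principle immediately gives $q>0$ for $s>0$. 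The only delicate regularity issue is that $V_m$ is merely $L^\infty$ in time (it is continuous in $x$ but inherits only the time regularity of $m$, which here is none beyond measurability—though since $m=m(x)$ is time-independent, $V_m$ is in fact as regular in time as $\partial_u f|_{u=u_m}$), so one cannot expect $p_m$ to be better than $W^{2,2}$, which is precisely the regularity claimed and which suffices for the subsequent second-order analysis.
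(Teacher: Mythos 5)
Your proposal is correct and follows essentially the same route as the paper: reverse time to obtain a forward parabolic equation for $q_m(s,\cdot)=p_m(T-s,\cdot)$ and apply the strong maximum principle, treating separately the case where the source $\left.\partial_u j_1\right|_{u=u_m}$ is strictly positive and the case where the initial datum $\left.\partial_u j_2\right|_{u=u_m(T,\cdot)}$ is strictly positive (using $u_m>0$ from Lemma \ref{Le:ubdd} in both). Your additional discussion of the $W^{2,2}$ regularity via $L^p$ parabolic estimates is consistent with what the paper establishes later in Proposition \ref{Pr:Regularity} and its appendix.
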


 \begin{proof}[Proof of lemma \ref{Le:RegAdjoint}]
 If we set $q_m(t,\cdot):=p_m(T-t,\cdot)$ it appears that $q_m$ solves
 \[\begin{cases}
{{\partial_{t}} q_m}-{\partial^2_{xx}} q_m-V_m q_m=\left.{\partial_u}j_1\right|_{u=u_m}\text{ in }\TT\,, 
\\ q_m(0,\cdot)=\left.{\partial_u}j_2\right|_{u=u_m} &\text{ in }\T.
\end{cases}\]
If $\partial_uj_2>0$ then, as $\partial_u j_1\geq 0$ the conclusion follows from the strong maximum principle. Likewise, if on the other hand we merely have $\partial_u j_2\geq 0$ then, as $\partial_uj_1>0$ in this case, we obtain the conclusion by the strong maximum principle.
 \end{proof}
 
 We now move on to the computation of the second order Gateaux derivative of the functional at hand.  The second order derivative of $m\mapsto u_m$ in the direction $h$ solves
 \begin{equation}\label{Eq:Ddotu}
 \begin{cases}
 {{\partial_{t}} \ddot u_m}-{\partial^2_{xx}} \ddot u_m-V_m\ddot u_m=2h \dot u_m+\left.{\partial^2_{uu}}f\right|_{u=u_m} \left(\dot u_m\right)^2&\text{ in }\TT,
\\ \ddot u_m(0,\cdot)\equiv 0&\text{ in }\T
 \end{cases}
 \end{equation}
We also have, for the second order Gateaux derivative of $J$ at $m$ in the direction $h$, the following expression:
 \begin{multline}\ddot{\mathcal J}(m)[h,h]=\int_\T \dot u_m^2(T,\cdot) \left.{\partial^2_{uu}}j_2\right|_{u=u_m(T,\cdot)}+\int_\T \ddot u_m(T,\cdot)\left.{\partial_u}j_2\right|_{u=u_m(T,\cdot)}
\\+\iint_\TT \dot u_m^2 \left.{\partial^2_{uu}}j_1\right|_{u=u_m}+\iint_\TT \ddot u_m\left.{\partial_u}j_1\right|_{u=u_m}.
 \end{multline}
 We use the adjoint state $p_m$ again: multiplying \eqref{Eq:Adjoint} by $\ddot u_m$ and integrating by parts, we obtain 
 \begin{multline}
 2\iint_\TT h\dot u_mp_m+\iint_\TT\left.{\partial^2_{uu}}f\right|_{u=u_m}(\dot u_m)^2p_m\\=\int_\O \ddot u_m\left.{\partial_u}j_2\right|_{u=u_m(T,\cdot)}+\iint_\TT \ddot u_m \left.{\partial_u}j_1\right|_{u=u_m}
 \end{multline}
 so that 
   \begin{multline}\ddot{\mathcal J}(m)[h,h]=\int_\T \dot u_m^2(T,\cdot) \left.{\partial^2_{uu}}j_2\right|_{u=u_m(T,\cdot)}
  \\+\iint_\TT \dot u_m^2 \left.{\partial^2_{uu}}j_1\right|_{u=u_m}+ 2\iint_\TT h\dot u_mp_m+\iint_\TT p_m\dot u_m^2\left.{\partial^2_{uu}}f\right|_{u=u_m}.
 \end{multline} Rearranging the terms, we get
 
   \begin{multline}\ddot{\mathcal J}(m)[h,h]=2\iint_\TT h\dot u_mp_m+\int_\T \dot u_m^2(T,\cdot) \left.{\partial^2_{uu}}j_2\right|_{u=u_m(T,\cdot)}
  \\+\iint_\TT \dot u_m^2\left( \left.{\partial^2_{uu}}j_1\right|_{u=u_m} +\left.{\partial^2_{uu}}f\right|_{u=u_m}p_m\right).
 \end{multline} 
Let us focus on the term 
\begin{equation}\label{Eq:RTS}\iint_\TT h\dot u_m p_m.\end{equation} {By Lemma \ref{Le:ubdd}, we know that $u_m>0$.} From \eqref{Eq:Dotu} we rewrite 
$$h=\frac{{\partial_{t}}\dot u_m-{\partial^2_{xx}} \dot u_m-\dot u_mV_m}{u_m}.$$ Let us define 
$$\Psi_m:=\frac{p_m}{u_m}.$$ Plugging this expression in \eqref{Eq:RTS} we obtain 
\begin{align*}
\iint_\TT h\dot u_m  p_m&=\iint_\TT \Psi_m \dot u_m \left({\partial_{t}} \dot u_m -{\partial^2_{xx}} \dot u_m -\dot u_m V_m\right)
\\&=\frac12\iint_\TT \Psi_m{\partial_{t}}\dot u_m^2
\\&+\iint_\TT \Psi_m \left|{\partial_x} \dot u_m \right|^2
\\&+\iint_\TT \dot u_m  \cdot {\partial_x} \Psi_m\cdot{\partial_x} \dot u_m 
\\&-\iint_\TT \Psi_mV_m\dot u_m ^2
\\&=-\frac12\iint_\TT( {\partial_{t}}{ \Psi_m})\dot u_m ^2+\frac12\int_\T \Psi_m(T,\cdot)\dot u_m ^2
\\&+\iint_\TT \Psi_m \left|{\partial_x} \dot u_m \right|^2-\frac12\iint_\TT \dot u_m ^2 {\partial^2_{xx}} \Psi_m -\iint_\TT \Psi_mV_m \dot u_m ^2
\\&=\frac12\int_\T \Psi_m(T,\cdot)\dot u_m ^2+\iint_\TT \Psi_m \left|{\partial_x} \dot u_m \right|^2
\\&-\frac12\iint_\TT ({\partial_{t}} \Psi_m)\dot u_m ^2-\frac12\iint_\TT \dot u_m ^2 {\partial^2_{xx}} \Psi_m -\iint_\TT \Psi_mV_m \dot u_m ^2
\\&=\frac12\int_\T \Psi_m(T,\cdot)\dot u_m ^2+\iint_\TT \Psi_m \left|{\partial_x} \dot u_m \right|^2
\\&+\iint_\TT\dot u_m ^2\left(-\frac12{\partial_{t}} \Psi_m-\frac12 {\partial^2_{xx}} \Psi_m - \Psi_mV_m \right).
\end{align*}
With$$\mathscr Z_m:=-\frac12{\partial_{t}} \Psi_m-\frac12 {\partial^2_{xx}} \Psi_m - \Psi_mV_m +\left.{\partial^2_{uu}}j_1\right|_{u=u_m}+\left.{\partial^2_{uu}}f\right|_{u=u_m}p_m,$$ 
the second order derivative writes
\begin{multline}\label{Eq:MB}\ddot{\mathcal J}(m)[h,h]=\int_\T \dot u_m^2(T,\cdot) \left.{\partial^2_{uu}}j_2\right|_{u=u_m(T,\cdot)}+\frac12\int_\T \Psi_m(T,\cdot)\dot u_m^2\\+\iint_\TT \Psi_m \left|{\partial_x}\dot u_m\right|^2+\iint_\TT \mathscr Z_m \dot u_m^2.\end{multline} We analyse this expression further in the next section.

\subsection{Lower estimate on the second order Gateaux derivative of $\mathcal J$}

We now prove the following lower estimate on this second order Gateaux derivative:

\begin{proposition}\label{Pr:Cruise}
Let $\e>0$ be arbitrarily small. There exist three positive constants $\alpha=\alpha(\e)\,,\beta\,, \gamma>0$ such that, for any admissible perturbation $h$ at $m$ \corr{and for any $\delta>0$}, there holds 
\begin{multline}\label{Eq:EstDDJ}
\ddot{\mathcal J}(m)[h,h]\geq  \alpha\corr{(1-\delta)}\iint_\TeT |{\partial_x} \dot u_m|^2-\corr{\alpha\delta\iint_{(T-\e;T)\times \T}|\partial_x\dot u_m|^2}
\\-\beta\corr{\left(1+\frac1\delta\right)}\iint_\TT \dot u_m^2-\gamma\int_\T \dot u_m^2(T,\cdot).\end{multline}\end{proposition}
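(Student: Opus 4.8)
The plan is to start from the exact expression \eqref{Eq:MB} for $\ddot{\mathcal J}(m)[h,h]$ and estimate its four summands separately, the only genuinely delicate one being the zeroth-order term $\iint_\TT \mathscr Z_m\dot u_m^2$. First I dispose of the two terminal terms at $t=T$. Since $j_2$ is $\mathscr C^2$ and $u_m(T,\cdot)$ is bounded by Lemma \ref{Le:ubdd}, assumption \eqref{Eq:HJ} gives a uniform bound on $\partial^2_{uu}j_2$, so $\int_\T\dot u_m^2(T,\cdot)\,\partial^2_{uu}j_2|_{u=u_m(T,\cdot)}\geq -\gamma\int_\T\dot u_m^2(T,\cdot)$ for a suitable $\gamma>0$, which produces the last term of \eqref{Eq:EstDDJ}. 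For the second terminal term I note that $\Psi_m(T,\cdot)=p_m(T,\cdot)/u_m(T,\cdot)=\partial_u j_2|_{u=u_m(T,\cdot)}/u_m(T,\cdot)\geq 0$, because $\partial_u j_2\geq0$ by \eqref{Eq:HJ} and $u_m>0$ by Lemma \ref{Le:ubdd}; hence $\tfrac12\int_\T\Psi_m(T,\cdot)\dot u_m^2\geq0$ and may simply be discarded.

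Next I treat the leading gradient term $\iint_\TT\Psi_m|\partial_x\dot u_m|^2$. Applying the strong maximum principle to $q_m(t,\cdot):=p_m(T-t,\cdot)$, exactly as in the proof of Lemma \ref{Le:RegAdjoint}, yields $p_m\geq0$ on all of $\TT$ together with $\inf_{[0,T-\e]\times\T}p_m>0$. Combined with the upper bound on $u_m$ from Lemma \ref{Le:ubdd}, this provides a constant $\alpha=\alpha(\e)>0$ such that $\Psi_m\geq\alpha$ on $\TeT$ and $\Psi_m\geq0$ on $(T-\e;T)\times\T$. Consequently $\iint_\TT\Psi_m|\partial_x\dot u_m|^2\geq\alpha\iint_\TeT|\partial_x\dot u_m|^2$.

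The heart of the argument is the bound on $\iint_\TT\mathscr Z_m\dot u_m^2$, where $\mathscr Z_m=-\tfrac12\partial_t\Psi_m-\tfrac12\partial^2_{xx}\Psi_m-\Psi_m V_m+\partial^2_{uu}j_1|_{u=u_m}+\partial^2_{uu}f|_{u=u_m}\,p_m$. The terms $\Psi_m V_m$, $\partial^2_{uu}j_1$ and $\partial^2_{uu}f\,p_m$ lie in $L^\infty(\TT)$ by \eqref{H1}, \eqref{Eq:HJ}, the $L^\infty$-bound on $p_m$ (maximum principle, bounded data) and the positivity of $u_m$, so they contribute at worst $-C\iint_\TT\dot u_m^2$. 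The two remaining terms require control of the derivatives of $\Psi_m=p_m/u_m$, and this is where the time-independence of $m$ enters. Differentiating \eqref{Eq:MainBil} and \eqref{Eq:Adjoint} in time — legitimate precisely because $m$ does not depend on $t$ — and using \eqref{H1}, the regularity of $u^0$ and of $(j_1,j_2)$, together with the parabolic regularity of Lemma \ref{Pr:Regularity}, I obtain $\partial_t u_m,\partial_t p_m,\partial_x u_m,\partial_x p_m\in L^\infty(\TT)$, whence $\partial_t\Psi_m,\partial_x\Psi_m\in L^\infty(\TT)$. Then $-\tfrac12\iint_\TT(\partial_t\Psi_m)\dot u_m^2\geq -C\iint_\TT\dot u_m^2$, and integrating by parts in space gives $-\tfrac12\iint_\TT(\partial^2_{xx}\Psi_m)\dot u_m^2=\iint_\TT(\partial_x\Psi_m)\,\dot u_m\,\partial_x\dot u_m$, which by Young's inequality with a free parameter is bounded below by $-\delta\iint_\TT|\partial_x\dot u_m|^2-\tfrac{C}{\delta}\iint_\TT\dot u_m^2$ (after absorbing $\|\partial_x\Psi_m\|_{L^\infty}$ into $\alpha$ and $\delta$).

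Finally I assemble the pieces. Splitting the $\delta$-term as $\delta\iint_\TeT+\delta\iint_{(T-\e;T)\times\T}$ and combining with $\alpha\iint_\TeT|\partial_x\dot u_m|^2$ gives exactly $\alpha(1-\delta)\iint_\TeT|\partial_x\dot u_m|^2-\alpha\delta\iint_{(T-\e;T)\times\T}|\partial_x\dot u_m|^2$ after a harmless rescaling of constants; all the $\dot u_m^2$ contributions collect into $-\beta(1+\tfrac1\delta)\iint_\TT\dot u_m^2$, and the terminal term is $-\gamma\int_\T\dot u_m^2(T,\cdot)$, which is \eqref{Eq:EstDDJ}. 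The main obstacle is not the bookkeeping but the regularity step, namely proving $\partial_t\Psi_m,\partial_x\Psi_m\in L^\infty(\TT)$: since $V_m$ is merely $L^\infty$ (and only $W^{2,p}$ in space), $\Psi_m$ cannot be $\mathscr C^2$, and it is precisely the time-independence of $m$ that upgrades the time-regularity of $u_m$ and $p_m$ enough to close the estimate — the structural reason the theorem is restricted to $t$-independent controls.
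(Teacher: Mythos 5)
Your overall skeleton is the same as the paper's — start from \eqref{Eq:MB}, keep the positive terms $\iint_\TT \Psi_m|\partial_x\dot u_m|^2$ and $\tfrac12\int_\T\Psi_m(T,\cdot)\dot u_m^2$, bound the $j_2''$ term by $-\gamma\int_\T\dot u_m^2(T,\cdot)$, and fight with the zeroth-order coefficient. Your treatment of the space derivatives is actually a clean alternative to the paper's: integrating by parts in $x$ trades $\partial^2_{xx}\Psi_m$ for $\partial_x\Psi_m$, which \emph{is} in $L^\infty(\TT)$ by the uniform $\mathscr C^1$ bounds of Proposition \ref{Pr:Regularity}, so you never need to touch second space derivatives of $\Psi_m$ at all. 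The paper instead computes the combination $\partial_t\Psi_m+\partial^2_{xx}\Psi_m$ explicitly, substitutes the equations for $u_m$ and $p_m$ so that only $2\partial^2_{xx}u_m\cdot p_m/u_m^2$ (plus bounded terms) survives, and then handles that merely-$L^p$ coefficient with the embedding $W^{1,2}(\T)\hookrightarrow\mathscr C^0(\T)$.

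The gap is your claim that $\partial_t\Psi_m\in L^\infty(\TT)$, which requires $\partial_t p_m\in L^\infty(\TT)$. Proposition \ref{Pr:Regularity} only gives $\sup_{t}\Vert\partial_t p_m(t,\cdot)\Vert_{L^p(\T)}<\infty$ for finite $p$, and the $L^\infty$ bound is not reachable under the paper's hypotheses: from \eqref{Eq:Adjoint}, $\partial_t p_m=-\partial^2_{xx}p_m-V_mp_m-\partial_uj_1|_{u=u_m}$, and $\partial^2_{xx}p_m(t,\cdot)$ is only controlled in $L^p(\T)$ because the potential $V_m$ contains the merely-$L^\infty$ control $m$ (so $p_m(t,\cdot)\in W^{2,p}$ but not $\mathscr C^2$); moreover the terminal value $\partial_tp_m(T,\cdot)$ involves $\partial^2_{xx}\left[\partial_uj_2(\cdot,u_m(T,\cdot))\right]$, which \eqref{Eq:HJ} ($j_2$ only $\mathscr C^2$) does not control pointwise. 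This is precisely the obstruction the paper flags ("$V_m$ is merely $W^{2,p}$ in space, and not $\mathscr C^2$"). The repair is exactly the paper's device applied to this one remaining term: use $\sup_t\Vert\partial_t\Psi_m(t,\cdot)\Vert_{L^2(\T)}<\infty$ (which does follow from Proposition \ref{Pr:Regularity} and the lower bound on $u_m$) together with $W^{1,2}(\T)\hookrightarrow\mathscr C^0(\T)$ to get
\begin{equation*}
\left|\iint_\TT (\partial_t\Psi_m)\,\dot u_m^2\right|\leq C\int_0^T\Vert\dot u_m(t,\cdot)\Vert_{W^{1,2}(\T)}\Vert\dot u_m(t,\cdot)\Vert_{L^2(\T)}\,dt,
\end{equation*}
which produces another cross term $\Vert\partial_x\dot u_m\Vert_{L^2}\Vert\dot u_m\Vert_{L^2}$ that is absorbed by the same Young-inequality step you already perform. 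With that replacement your argument closes and yields \eqref{Eq:EstDDJ}; as written, the $L^\infty$ assertion on the time derivative is unjustified.
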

 Proving this proposition requires some additional regularity on $u_m\,, p_m$. This is where the regularity of $m$ in time (here, $m$ is constant in time) is crucial. We gather these regularity properties in the following proposition:
 
\begin{proposition}\label{Pr:Regularity}
For any $m\in \mathcal M(\T)$, for any $p\in [1;+\infty)$, there exists a constant $\mathfrak M_{m,p}$ such that
\begin{multline}
\sup_{t\in [0,T]}\left\Vert \partial_t u_m(t,\cdot)\right\Vert_{L^p(\T)}+\sup_{t\in[0,T]}\left\Vert  u_m(t,\cdot)\right\Vert_{W^{2,p}(\T)}\\+\sup_{t\in [0,T]}\left\Vert \partial_t p_m(t,\cdot)\right\Vert_{L^p(\T)}+\sup_{t\in[0,T]}\left\Vert  p_m(t,\cdot)\right\Vert_{W^{2,p}(\T)}\leq \mathfrak M_{m,p}.
\end{multline}In particular, by Sobolev embeddings, there exists a constant $\mathfrak N$ such that 
\[\sup_{t\in (0,T)}\Vert u_m(t,\cdot)\Vert_{\mathscr C^1(\T)}\,, \sup_{t\in (0,T)}\Vert p_m(t,\cdot)\Vert_{\mathscr C^1(\T)}\leq \mathfrak N.\]\end{proposition}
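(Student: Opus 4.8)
The plan is to obtain all four uniform-in-time bounds by a bootstrap that alternates between differentiating the equations in the time variable — which is licit precisely because $m=m(x)$ carries no time dependence, so that no $(\partial_t m)u_m$ term is produced — and freezing the time variable to invoke one-dimensional elliptic regularity on $\T$. I would treat the state $u_m$ first and then transfer the information to the adjoint $p_m$. Throughout, the basic inputs are the $L^\infty$ bound $0<\inf u_m\le\|u_m\|_{L^\infty}\le\max(\|u^0\|_{L^\infty},\kappa)$ of Lemma~\ref{Le:ubdd} and the boundedness of $V_m=m+\partial_u f|_{u=u_m}$, which follows from \eqref{H1}.

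For $u_m$, set $v:=\partial_t u_m$. Differentiating \eqref{Eq:MainBil} in time and using $m=m(x)$, the function $v$ solves the linear parabolic equation $\partial_t v-\partial^2_{xx}v=V_m v+\partial_t f(t,x,u_m)$, in which both $V_m$ and $\partial_t f(\cdot,\cdot,u_m)$ are bounded by \eqref{H1}, and whose initial datum $v(0,\cdot)=\partial^2_{xx}u^0+mu^0+f(0,\cdot,u^0)$ lies in $L^\infty(\T)$ because $u^0\in\mathscr C^2(\T)$. A comparison argument with the ODE $\bar v'=\|V_m\|_\infty\bar v+\|\partial_t f\|_\infty$ then yields $v\in L^\infty(\TT)$, hence $\sup_t\|\partial_t u_m(t,\cdot)\|_{L^p}\le|\T|^{1/p}\|v\|_{L^\infty}<\infty$ for every $p$. (The differentiation is justified by first placing $u_m$ in the maximal-regularity class $\partial_t u_m,\partial^2_{xx}u_m\in L^p(\TT)$ and then working with time difference quotients.) Reading \eqref{Eq:MainBil} at a frozen time as $\partial^2_{xx}u_m(t,\cdot)=\partial_t u_m(t,\cdot)-mu_m(t,\cdot)-f(t,\cdot,u_m(t,\cdot))$, the right-hand side is bounded in $L^p(\T)$ uniformly in $t$ by the previous step, so the Calder\'on--Zygmund estimate $\|u_m(t,\cdot)\|_{W^{2,p}}\lesssim\|u_m(t,\cdot)\|_{L^p}+\|\partial^2_{xx}u_m(t,\cdot)\|_{L^p}$ on $\T$ gives $\sup_t\|u_m(t,\cdot)\|_{W^{2,p}}<\infty$, and the Sobolev embedding yields the stated $\mathscr C^1$ bound.

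For the adjoint $p_m$ I would run the same two moves. First, time-reversing \eqref{Eq:Adjoint} as in Lemma~\ref{Le:RegAdjoint} and comparing with an ODE gives $p_m\in L^\infty(\TT)$. The key step is to bound $\partial_t p_m$: differentiating \eqref{Eq:Adjoint} in time, $w:=\partial_t p_m$ solves a linear parabolic equation with potential $V_m$, source $-(\partial_t V_m)p_m-\partial_t(\partial_u j_1|_{u_m})$, and endpoint datum $\partial_t p_m(T,\cdot)$. Here I would use $\partial_t V_m=\partial_t\partial_u f|_{u_m}+\partial^2_{uu}f|_{u_m}\,\partial_t u_m$ together with the analogous identity for $\partial_t(\partial_u j_1|_{u_m})$: both are bounded thanks to $\partial_t u_m\in L^\infty$ from the first part, the boundedness of $\partial^2_{uu}f$ from \eqref{H1}, and the $u$-derivative bounds in \eqref{Eq:HJ}. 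Once $\partial_t p_m\in L^\infty(\TT)$ is established, freezing time in \eqref{Eq:Adjoint} as $\partial^2_{xx}p_m=-\partial_t p_m-V_m p_m-\partial_u j_1|_{u_m}$ and invoking elliptic regularity on $\T$ gives $\sup_t\|p_m(t,\cdot)\|_{W^{2,p}}<\infty$.

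I expect the main obstacle to lie entirely in the adjoint step, for two reasons. First, one must control the time derivatives of the potential $V_m$ and of the source $\partial_u j_1|_{u_m}$, and this is exactly where the $L^\infty$-bound on $\partial_t u_m$ obtained for the state, combined with the boundedness of the second $u$-derivatives of $f$ and $j_1$, is indispensable. Second, the endpoint $t=T$ is delicate: the datum $p_m(T,\cdot)=\partial_u j_2(\cdot,u_m(T,\cdot))$ must be shown to belong to $W^{2,p}(\T)$ in order to make the endpoint datum of $w$ meaningful, which rests on the $W^{2,p}$-regularity of $u_m(T,\cdot)$ from the first part together with the smoothness of $j_2$. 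Underlying the whole scheme is the structural role of the time-independence of $m$: had $m$ depended on $t$, differentiating in time would produce a term $(\partial_t m)u_m$ that is uncontrollable for a merely $L^\infty$ control, which is precisely the mechanism by which this bootstrap — and, ultimately, the bang-bang conclusion itself — breaks down for genuinely time-dependent controls.
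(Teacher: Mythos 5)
Your proof is correct and follows essentially the same route as the paper: differentiate the equation in time (which is exactly where $m=m(x)$ is used), obtain a uniform-in-time bound on $\partial_t u_m$ and $\partial_t p_m$, then freeze $t$ and invoke one-dimensional elliptic regularity on $\T$ to get the $W^{2,p}$ bounds. The only (cosmetic) difference is that you bound $\partial_t u_m$ in $L^\infty$ by comparison with an ODE supersolution, whereas the paper's Lemma \ref{Le:Lp} derives uniform-in-time $L^{2k}$ bounds by testing with $2k\theta^{2k-1}$ and a Gronwall argument.
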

The proof of this proposition is standard in the regularity theory of parabolic equations; it can be derived from classical $L^p$  estimates (see for instance \cite[Theorem 7.32, pp. 182-183]{Lieberman}) but the setting we are working in allows for a quicker proof, that we give in appendix \ref{Ap:Regularity}.
 
 With these regularity estimates we can prove proposition \ref{Pr:Cruise}
 \begin{proof}[Proof of proposition \ref{Pr:Cruise}]

First off, from lemmas \ref{Le:ubdd} and \ref{Le:RegAdjoint} we have that \[\forall \e>0\,, \inf_{(0,T-\e]\times \T}\Psi_m>0.\] Hence there exists a constant $\alpha=\alpha(\e)>0$ such that

$$\iint_\TT \Psi_m|{\partial_x} \dot u_m|^2\geq \alpha \iint_{(0,T-\e)\times \T}|{\partial_x} \dot u_m|^2.$$
Second, since $j_2$ is  $\mathscr C^2$ in $u$ and $u_m$ is bounded, there exists a constant $\gamma>0$ such that 
$$\int_\T  \dot u_m^2(T,\cdot) \left.{\partial^2_{uu}}j_2\right|_{u=u_m(T,\cdot)}\geq-\gamma\int_\T  \dot u_m^2(T,\cdot) .$$

\paragraph{Estimates on $\mathscr Z_m$} 
As for $\mathscr Z_m$, we rewrite it
$$\mathscr Z_m=-\frac12{\partial_t} \Psi_m-\frac12 {\partial^2_{xx}} \Psi_m +\mathscr Y_m\quad \mathrm{ with }\quad \mathscr Y_m:= -\Psi_mV_m +\left.{\partial^2_{uu}}j_1\right|_{u=u_m}+\left.{\partial^2_{uu}}f\right|_{u=u_m}p_m.$$ 
Of course, 
\[\mathscr Y_m\in L^\infty.\]
Let us focus on the term
\[-{\partial_t}\Psi_m-{\partial^2_{xx}} \Psi_m.\]
We compute
\begin{equation}
{\partial_t}\Psi_m=\frac1{u_m}{\partial_t}p_m-\frac{p_m}{u_m^2}{\partial_t}u_m,
\end{equation}
and
\begin{equation}
{\partial^2_{xx}}\Psi_m=-2\frac1{u_m^2}{\partial_x}p_m\cdot{\partial_x}u_m+\frac1{u_m}{\partial^2_{xx}}p_m-\frac{p_m}{u_m^2}{\partial^2_{xx}}u_m+2\frac{p_m}{u_m^3}\left({\partial_x} u_m\right)^2.
\end{equation}
Setting $$\mathscr X_m:=-2\frac1{u_m^2}({\partial_x} p_m)\cdot({\partial_x} u_m)+2\frac{p_m}{u_m^3}\left({\partial_x} u_m\right)^2$$ we have 
$${\partial^2_{xx}}\Psi_m=\frac1{u_m}\partial^2_{xx}p_m-\frac{p_m}{u_m^2}{\partial^2_{xx}}u_m+\mathscr X_m.$$
From Proposition \ref{Pr:Regularity} we know that
$$\mathscr X_m\in L^\infty(\TT).$$
Hence,

\begin{align*}
{\partial_t}\Psi_m+{\partial^2_{xx}}\Psi_m&=\frac1{u_m}{\partial_t}p_m-\frac{p_m}{u_m^2}{\partial_t} u_m
\\&+\frac1{u_m}{\partial^2_{xx}}p_m-\frac{p_m}{u_m^2}{\partial^2_{xx}}u_m+\mathscr X_m
\\&=\mathscr X_m
\\&+\frac1{u_m}\left({\partial_t}p_m+{\partial^2_{xx}}p_m\right)
\\&-\frac{p_m}{u_m^2}\left( {\partial_t}u_m+{\partial^2_{xx}}u_m\right)
\\&=\mathscr X_m+\frac{p_m}{u_m}\left(-\left.{\partial_u}j_1\right|_{u=u_m}\right)
\\&-\frac{p_m}{u_m^2}\left(2{\partial^2_{xx}}u_m+mu_m+f(t,x,u_m)\right),
\end{align*}
whence, using the fact that $j_1\in \mathscr C^1$ and proposition \ref{Pr:Regularity},
the function $\tilde{\mathscr Z_m}$ satisfies
\begin{equation}\label{Eq:EstZm}
\forall p\in [1;+\infty)\,, \sup_{t\in [0,T]}\left\Vert {\mathscr Z}_m(t,\cdot)\right\Vert_{L^p(\T)}=:\mathfrak M(p)<\infty.\end{equation}
This allows to estimate the last term in the second-order Gateaux derivative: from the Sobolev embedding $W^{1,2}(\T)\hookrightarrow \mathscr C^0(\T)$ with constant $C_{\mathrm{sob}}$ and the Cauchy-Schwarz inequality we obtain 
\begin{align*}
\iint_\TT \mathscr Z_m \dot u_m^2&\geq -\int_0^T\Vert \dot u_m(t,\cdot)\Vert_{W^{1,2}(\T)}\cdot \Vert { \mathscr Z}_m(t,\cdot)\Vert_{L^2(\T)}\cdot \Vert \dot u_m(t,\cdot)\Vert_{L^2(\T)}dt
\\&=-\mathfrak M(2)C_{\mathrm{sob}}\int_0^T\Vert \dot u_m(t,\cdot)\Vert_{W^{1,2}(\T)}\cdot\Vert \dot u_m(t,\cdot)\Vert_{L^2(\T)}dt
\\&=-\mathfrak M(2)C_{\mathrm{sob}}\int_0^T\Vert{\partial_x}\dot u_m(t,\cdot)\Vert_{L^{2}(\T)}\cdot\Vert \dot u_m(t,\cdot)\Vert_{L^2(\T)}dt
\\&-\mathfrak M(2)C_{\mathrm{sob}}\iint_\TT \dot u_m^2.
\end{align*}
\begin{remark}\label{Argued}It may be argued that here we already use the fact that we are working in the one-dimensional setting, when using the Sobolev embedding $W^{1,2}\hookrightarrow \mathscr C^0$. However, this can be very well extended to the higher dimensional setting, in which case, we would simply have an estimate of the form $\iint \mathscr Z_m \dot u_m^2\geq -C_{\mathrm{sob}}\int_0^T\Vert \dot u_m(t,\cdot)\Vert_{W^{1,2}}\cdot \Vert \tilde{ \mathscr Z}_m(t,\cdot)\Vert_{L^p}\cdot \Vert \dot u_m(t,\cdot)\Vert_{L^2}dt$, where we would have used the three exponents' H\"{o}lder inequality with $1/p+1/q+1/2=1$, with $C_{\mathrm{sob}}$ the constant of the embedding $W^{1,2}\hookrightarrow L^q.$ 
\end{remark}
We obtain 
\begin{multline}
\ddot{\mathcal J}(m)[h,h]\geq  \alpha\iint_\TeT |{\partial_x} \dot u_m|^2-\mathfrak M(2)C_{\mathrm{sob}}\iint_\TT \dot u_m^2\\-\mathfrak M(2)C_{\mathrm{sob}}\int_0^T \Vert {\partial_x} \dot u_m(t,\cdot)\Vert_{L^2(\T)}\Vert \dot u_m(t,\cdot)\Vert_{L^2(\T)}dt-\gamma\int_\T \dot u^2(T,\cdot).\end{multline}
We perform one last step: from the arithmetic-geometric inequality, for any $\tilde\delta>0$, 
\begin{equation}\Vert {\partial_x}\dot u_m(t,\cdot)\Vert_{L^2(\T)}\cdot\Vert u_m(t,\cdot)\Vert_{L^2(\T)}
\leq \tilde\delta \Vert {\partial_x} \dot u_m(t,\cdot)\Vert_{L^2(\T)}^2+\frac1{\tilde\delta} \Vert  \dot u_m(t,\cdot)\Vert_{L^2(\T)}^2.\end{equation}
Fix $\delta>0$ and let $\tilde\delta:=\frac{\alpha}{\mathfrak M(2)C_{\mathrm{sob}}}\delta$. \corr{Thus, for some constant $\beta>0$} we have the estimate
\begin{multline}
\ddot{\mathcal J}(m)[h,h]\geq  \corr{\alpha(1-\delta)}\iint_\TeT |{\partial_x} \dot u_m|^2-\corr{\alpha\delta\iint_{(T-\e;T)\times \T}|\partial_x\dot u_m|^2}
\\-\corr{\beta\left(1+\frac1\delta\right)}\iint_\TT \dot u^2-\gamma\int_\T \dot u^2(T,\cdot).\end{multline}
The proof is now complete.

 \end{proof}
This proposition indicates that a possibility to derive a proof of theorem \ref{Th:Bgbg} is as follows: first, picking a maximiser $m^*$ of \eqref{Eq:Pv}, we argue by contradiction and assume $m^*$ is not bang-bang, so that the set $\omega=\{0<m^*<1\}$ has positive measure. Thus, for any admissible perturbation $h$ supported in $\omega$, $\dot{\mathcal J}(m^*)[h]=0$. If we can pick an admissible perturbation such that $\iint_\TeT|{\partial_x} \dot u_m|^2\gg \iint_\TT \dot u_m^2+\int_\T \dot u_m^2(T,\cdot)$, the second order Gateau derivative is positive, in contradiction with the optimality of $m^*$. To build such an $h$, we need to choose it highly oscillating; in other words, its Fourier series only has high order modes. Thus, the next sections are, respectively, devoted to the construction of an admissible $h$ that only has high Fourier modes, and to the study of the ensuing $\dot u_m$ via two-scale asymptotic expansions.

\emph{Throughout, we thus {argue by contradiction by considering} a non bang-bang maximiser $m^*$ and defining $\omega:=\{0<m^*<1\}$.}
\subsection{Construction of an admissible perturbation}\label{Se:Construction}
The relevant function to study is $\dot u_m$, which solves the parabolic equation 
$${\partial_t}\dot u_m -{\partial^2_{xx}} \dot u_m-V_m\dot u_m=hu_m.$${Recall that we argue by contradiction, and consider a maximiser $m^*$ that is not bang-bang, so that the set \[\omega=\{0<m^*<1\}\] has positive measure:
\[ \mathrm{Vol}(\omega)>0.\]}
We want to build $h$ such that, a large integer $K$ being fixed, $h$ is supported in $\omega$ and has the Fourier decomposition
$$h(x)=\sum_{k= K}^\infty a_k \cos(kx)+b_k\sin(kx).$$ 
Let us prove that such an admissible perturbation exists: let $\omega:=\{0<m^*<1\}$. As $m^*$ is not bang-bang, $\mathrm{Vol}(\omega)>0$. Consequently the space $L^2(\omega)$ is infinite dimensional. We identify each $H\in L^2(\omega)$ with $h:=H\mathds 1_\omega \in L^2(\T)$. We fix an integer $K\in \N\backslash\{0\}$ and we define, for any $0\leq k\leq K-1$ the linear functionals
$$T_k^1:L^2(\omega)\ni H \mapsto \int_\omega H{(x)}\cos(kx)dx\,, T_{{k}}^2:L^2(\omega)\ni H\mapsto \int_\omega H{(x)}\sin(kx)dx.$$
Finally, we define 
$$E_K:=\bigcap\limits_{k=0}^{K-1}\left( \ker\left(T_k^1\right)\cap \ker\left(T_k^2\right)\right)$$
{The linear subspace} $E_K$, as a finite intersection of closed hyperplanes, has finite co-dimension. It is, in particular, infinite dimensional. Hence, we can pick $H_K\in E_K$ such that $ \Vert H_K\Vert_{L^2(\omega)}>0$. By definition, $h_K:=H_k\mathds 1_\omega$ has the Fourier decomposition
$$h_K(x)=\sum_{k=K}^\infty a_k \cos(kx)+b_k\sin(kx)\text{ with, up to renormalisation, }\sum_{k=K}^\infty a_k^2+b_k^2=1.$$ 

We now want to study how $\dot u_m[h_K]$ behaves, for $K$ large. This prompts us to considering, first, the case of single cosines and sines.

\subsection{Computations for single-mode perturbations}
We emphasise once again that the computations of this paragraph are formal; we refer to proposition \ref{Pr:Rosecroix} for the rigorous proof of the expansions.
\paragraph{Case of single cosines}
 Let, for any $k\in \N\backslash\{0\}$, $\eta_k$ be the solution of 
 \begin{equation}\label{Eq:Etak}
 \begin{cases}
 \partial_t\eta_k-{\partial^2_{xx}} \eta_k-V_m\eta_k=u_m(t,x)\cos(kx)&\text{ in }\TT\,, 
 \\ \eta_k(0,\cdot)=0& \text{ in }\T.\end{cases}
 \end{equation}
 A natural expansion to look for is of the form
 $$ \eta_k(t,x)\approx \frac1{k^2}R_1(x,kx,t,k^2t)+\frac1{k^3}R_2(x,kx,t,k^2t)+\dots$$ By convention, we call $y$ and $s$ the second and fourth variables of $R_1$ and $R_2$. Plugging this formal expansion in \eqref{Eq:Etak} we obtain the following equations:
 \begin{equation}\label{Eq:R1}
 \begin{cases}
 \partial_sR_1-\partial^2_{yy}R_1=\cos(y)u_m(t,x)\,, 
 \\ R_1(x,y,0,0)=0,\end{cases}\end{equation}and
 \begin{equation}\label{Eq:R2}
 \begin{cases}
 \partial_sR_2-\partial^2_{yy}R_2=2\partial^2_{xy}R_1(x,y,t,s)\,, 
 \\ R_2(x,y,0,0)=0.\end{cases}\end{equation}

\eqref{Eq:R1} can be solved explicitly and we obtain 
\begin{equation}\label{Sol:R1}
R_1(x,y,t,s)=u_m(t,x) (1-e^{-s})\cos(y).
\end{equation}
This allows to derive the explicit form of \eqref{Eq:R2}. Namely, $R_2$ satisfies
\begin{equation}
\partial_s R_2-\partial^2_{yy}R_2=-2{\partial_x}u_m(t,x)\cdot(1-e^{-s})\sin(y).
\end{equation}
As a consequence, we look for $R_2$ under the form 
$$R_2(x,y,t,s)=-2{\partial_x}u_m(t,x)\cdot\sin(y)\varphi(s).$$ The function $\varphi$ satisfies
$$\varphi'+\varphi=1-e^{-s}$$ which can be integrated explicitly as 
$$\p(s)=1-se^{-s}-e^{-s}.$$
Finally, 
\begin{equation}\label{Sol:R2}
R_2(x,y,t,s)=-2{\partial_x}u_m(t,x)\cdot\sin(y)(1-se^{-s}-e^{-s}).
\end{equation}

\paragraph{Case of single sines} We then consider the case of single sines. Let, for any $k\in \N\backslash\{0\}$, $\zeta_k$ be the solution of
 \begin{equation}\label{Eq:Zetak}
 \begin{cases}
 \partial_t\zeta_k-{\partial^2_{xx}} \zeta_k-V\zeta_k=u_m(t,x)\sin(kx)&\text{ in }\TT\,, 
 \\ \zeta_k(0,\cdot)=0&\text{ in }\T.\end{cases}
 \end{equation}
Similarly, we look for an expansion in the form $$ \zeta_k(t,x)\approx \frac1{k^2}S_1(x,kx,t,k^2t)+\frac1{k^3}S_2(x,kx,t,k^2t)+\dots$$ By convention, we call $y$ and $s$ the second and fourth variables of $S_1$ and $S_2$. Plugging this formal expansion in \eqref{Eq:Zetak} we obtain the following equations:
 \begin{equation}\label{Eq:S1}
 \begin{cases}
 \partial_sS_1-\partial^2_{yy}S_1=\sin(y)u_m(t,x)\,, 
 \\ S_1(x,y,0,0)=0,\end{cases}\end{equation}and
 \begin{equation}\label{Eq:S2}
 \begin{cases}
 \partial_sS_2-\partial^2_{yy}S_2=2\partial^2_{xy}S_1(x,y,t,s)\,, 
 \\ S_2(x,y,0,0)=0.\end{cases}\end{equation}

\eqref{Eq:S1} can be solved explicitly and we have 
\begin{equation}\label{Sol:S1}
S_1(x,y,t,s)=u_m(t,x) (1-e^{-s})\sin(y).
\end{equation}
This allows to derive the explicit form of \eqref{Eq:S2}. Namely, $S_2$ satisfies
\begin{equation}
\partial_s S_2-\partial^2_{yy}S_2=2{\partial_x}u_m(t,x)\cdot(1-e^{-s})\cos(y).
\end{equation} Proceeding as in the computations of $R_2$ we derive
\begin{equation}\label{Sol:S2}
S_2(x,y,t,s)=2{\partial_x}u_m(t,x)\cdot\cos(y)(1-se^{-s}-e^{-s}).
\end{equation}

Of course we wish to write an approximation of the type
\begin{multline}\label{Ilias} \dot u_m\approx \sum_{k=K}^\infty a_k\left(\frac1{k^2}R_1(x,kx,t,k^2t)+\frac1{k^3}R_2(x,kx,t,k^2t)\right)\\+\sum_{k=K}^\infty b_k\left(\frac1{k^2}S_1(x,kx,t,k^2t)+\frac1{k^3}S_2(x,kx,t,k^2t)\right).\end{multline}
In order to determine how strong this approximation should be to yield an exploitable result on $\ddot{\mathcal J}$, we next study the leading term of \eqref{Eq:EstDDJ}, should the expansion \eqref{Ilias} hold.

\subsection{Formal estimate of the leading order term}
We work under the assumption that
\begin{align*}
\dot u_m&\approx\sum_{k=K}^\infty a_k \eta_k+\sum_{k=K}^\infty b_k \zeta_k
\\&\approx  \sum_{k=K}^\infty a_k\left( u_m(t,x)\frac1{k^2}\cos(kx)(1-e^{-k^2t})-{\partial_x}u_m\cdot\frac2{k^3}\sin(kx)(1-k^2te^{-k^2t}-e^{-k^2t})\right)
\\&+\sum_{k=K}^\infty b_k\left( u_m(t,x)\frac1{k^2}\sin(kx)(1-e^{-k^2t})+{\partial_x}u_m\cdot\frac2{k^3}\cos(kx)(1-k^2te^{-k^2t}-e^{-k^2t})\right)
\end{align*}
In particular, we have (this is still formal, at this point)
\begin{align*}
\partial_x\dot u_m&\approx\sum_{k=K}^\infty a_k\left({\partial_x}u_m\cdot \frac1{k^2}\cos(kx)(1-e^{-k^2t})-{\partial^2_{xx}}u_m\cdot\frac2{k^3}\sin(kx)(1-k^2te^{-k^2t}-e^{-k^2t})\right)
\\&+\sum_{k=K}^\infty a_k\left(- u_m\frac1{k}\sin(kx)(1-e^{-k^2t})-{\partial_x}u_m\cdot\frac2{k^2}\cos(kx)(1-k^2te^{-k^2t}-e^{-k^2t})\right)
\\&+\sum_{k=K}^\infty b_k\left({\partial_x}u_m\cdot \frac1{k^2}\sin(kx)(1-e^{-k^2t})+{\partial^2_{xx}}u_m\cdot \frac2{k^3}\cos(kx)(1-k^2te^{-k^2t}-e^{-k^2t})\right)
\\&+\sum_{k=K}^\infty b_k\left( u_m\frac1{k}\cos(kx)(1-e^{-k^2t})-{\partial_x}u_m\cdot\frac2{k^2}\sin(kx)(1-k^2te^{-k^2t}-e^{-k^2t})\right)
\\&=u_m\left\{ \sum_{k=K}^\infty\frac{- a_k\sin(kx)+b_k\cos(kx)}{k}\left(1-e^{-k^2t}\right)\right\}&\left(=:L_K\right)
\\&+{\partial_x}u_m\cdot\left\{\sum_{k=K}^\infty \frac{a_k\cos(kx)-b_k\sin(kx)}{k^2}\left(-1+e^{-k^2t}+2k^2te^{-k^2t}\right)\right\}&\left(=:I_K\right)
\\&-2{\partial^2_{xx}}u_m\cdot\left\{
\sum_{k=K}^\infty\frac{ a_k\sin(kx)-b_k\cos(kx)}{k^3}\left(1-k^2te^{-k^2t}-e^{-k^2t}\right)\right\}&\left(=:J_K\right).
\end{align*}
Thus, we should have 
\begin{equation}\label{Eq:Dec}\iint_\TeT |{\partial_x} \dot u_m|^2\approx\iint_\TeT\left(I_K+J_K+L_K\right)^2.\end{equation} Given the expressions for $I_K\,, J_K\,, L_K$, we expect $\iint_\TeT L_K^2$ to be leading in \eqref{Eq:Dec}. For this reason we first bound the right-hand side of \eqref{Eq:Dec} from below; we shall use the inequality
\begin{equation}\label{Ineq1}|xy|\leq \e x^2+\frac1\e y^2\text{ for any $x\,, y \in \R\,, \e>0$}\end{equation}as well as
\begin{equation}\label{Ineq2}(x+y)^2\leq 2\left(x^2+y^2\right)\text{ for any $x,y\in \R.$}\end{equation} We obtain
\begin{align}
\iint_\TeT \left(I_K+J_K+L_K\right)^2&=\iint_\TeT L_K^2+2\iint_\TeT L_K(I_K+J_K)\\&+\iint_\TeT (I_K+J_K)^2
\\&\geq \iint_\TeT L_K^2-\frac12\iint_\TeT L_K^2
\\&-4\iint_\TeT \left(I_K+J_K\right)^2 +\iint_\TeT \left(I_K+J_K\right)^2\\&\text{ from \eqref{Ineq1} with $\e=\frac12$}
\\\label{Sol}&=\frac12\iint_\TeT L_K^2-6\iint_\TeT I_K^2-6\iint_\TeT J_K^2.
\end{align}
We recall that from lemma \ref{Le:ubdd} we have 
$$\underline{d}:=\inf_\TT u_m>0.$$This will prove crucial.

We shall estimate the three terms (\emph{i.e. } $\iint L_K^2\,, \iint I_K^2\,, \iint J_K^2$) separately.

\paragraph{Estimate of $L_K$}
We have 
\begin{align*}
\iint_\TeT L_K^2&=\iint_\TeT u_m^2(t,x)\left\{ \sum_{k=K}^\infty\frac{- a_k\sin(kx)+b_k\cos(kx)}{k}\left(1-e^{-k^2t}\right)\right\}^2dxdt
\\&\geq \underline{d}^2\iint_\TeT \left\{ \sum_{k=K}^\infty\frac{- a_k\sin(kx)+b_k\cos(kx)}{k}\left(1-e^{-k^2t}\right)\right\}^2dxdt
\\&=\underline{d}^2\pi\sum_{k=K}^\infty\left(a_k^2+b_k^2\right)\int_0^{T-\e}\frac1{k^2} \left( 1-e^{-k^2t}\right)^2dt
\\&\geq 2C_0 \sum_{k=K}^\infty\frac{a_k^2+b_k^2}{k^2}
\end{align*}
for a positive constant $C_0>0$. We have hence obtained
\begin{equation}\label{Est:LK}\iint_\TeT L_K^2\geq 2 C_0 \sum_{k=K}^\infty\frac{a_k^2+b_k^2}{k^2}\text{ for a constant $C_0>0$.}\end{equation}

\begin{remark}We choose $2C_0$ to obtain a cleaner estimate on the second order derivative.\end{remark}

\paragraph{Estimate of $I_K$} We first recall that from proposition \ref{Pr:Regularity}
\[\overline{d}:=\sup_{t\in[0,T]}\Vert u_m\Vert_{\mathscr C^1(\T)}<\infty.\]
We then bound and compute 
\begin{align*}
\iint_\TeT I_K^2&=4\iint_\TeT \left({\partial_x}u_m\right)^2\left\{\sum_{k=K}^\infty \frac{a_k\cos(kx)-b_k\sin(kx)}{k^2}\left(-1+e^{-k^2t}+2k^2te^{-k^2t}\right)\right\}^2dtdx
\\&\leq 4{\overline{d}}^2\iint_\TeT\left\{\sum_{k=K}^\infty \frac{a_k\cos(kx)-b_k\sin(kx)}{k^2}\left(-1+e^{-k^2t}+2k^2te^{-k^2t}\right)\right\}^2dxdt
\\&=4\pi\overline{d}^2\sum_{k=K}^\infty\frac{a_k^2+b_k^2}{k^4}\int_0^{T-\e}\left(-1+e^{-k^2t}(2k^2t+1)\right)^2dt
\\&=4\pi{\overline{d}}^2\sum_{k=K}^\infty\frac{a_k^2+b_k^2}{k^4}\int_0^{T-\e}(1+4e^{-2k^2t}k^4t^2-2e^{-k^2t}-4k^2te^{-k^2t}+4k^2t e^{-2k^2t}+e^{-2k^2t})dt.
\end{align*}
However, each of the integrals can be computed explicitly:
{Indeed, first observe that:}
\begin{equation*}
\int_0^{T-\e} t^2e^{-2k^2t}dt=-\frac{(T-\e)^2e^{-2k^2(T-\e)}}{2k^2}-\frac{(T-\e)e^{-2k^2(T-\e)}}{2k^4}+\frac1{4k^6}\left(1-e^{-2k^2(T-\e)}\right)
\leq \frac{N_{0,I}}{k^6}\end{equation*}
{ for some constant $N_{0,I}$.}

{Second, we  obtain in a similar fashion the estimate:}
\begin{equation*} 
 \int_0^{T-\e} e^{-2k^2t}dt\leq \frac{N_{1,I}}{k^2}\end{equation*}{ for some constant $N_{1,I}$}.
 
 {Finally, the same type of computations leads to}\begin{equation*}
\int_0^{T-\e} te^{-k^2t}dt\leq \frac{N_{2,I}}{k^4}\end{equation*}{ for some constant $N_{2,I}$}.
Hence, there exists $K\in \N$ and a  constant $N_{3,I}$ such that for any $k\geq K$,
\begin{equation}\label{Chapoutot}
\int_0^{T-\e} (1+4e^{-2k^2t}k^4t^2-2e^{-2k^2t}-4k^2te^{-k^2t}+4k^2t e^{-2k^2t}+e^{-2k^2t})dt\leq {N_{3,I}}\left(1+\frac1{k^2}\right).\end{equation}
Consequently, there exists a positive constant $C_1$ such that
\begin{equation}\label{Est:IK}
\iint_\TeT I_K^2\leq \frac{C_1}6\sum_{k=K}^\infty\frac{a_k^2+b_k^2}{k^4}.\end{equation}

\paragraph{Estimate on $J_K$}
This last term is the trickiest one. Indeed, we do not have ${\partial^2_{xx}}u_m\in L^\infty$, but simply, from proposition \ref{Pr:Regularity}, 
$$\forall p\in [1;+\infty)\,, \sup_{t\in[0,T]}\left\Vert {\partial^2_{xx}}u_m(t,\cdot)\right\Vert_{L^p(\T)}=:\mathfrak M(p)<\infty.$$
However, we can use the same trick as in bounding the second order derivative (see the proof of proposition \ref{Pr:Cruise}). We indeed obtain
\begin{align*}
\iint_\TeT J_K^2&=\iint_\TeT 4\left({\partial^2_{xx}}u_m\right)^2\left\{\underbrace{
\sum_{k=K}^\infty\frac{ a_k\sin(kx)-b_k\cos(kx)}{k^3}\left(1-k^2te^{-k^2t}-e^{-k^2t}\right)}_{=:W_K}\right\}^2
\\&\leq \underbrace{16 \mathfrak M(4)^2C_{sob}}_{=:D_{0,J}}\int_0^{T-\e} \Vert W_K(t,\cdot)\Vert_{L^2(\T)}\Vert  W_K(t,\cdot)\Vert_{W^{1,2}(\T)}dt
\\&\leq D_{0,J}\iint_\TeT W_K^2+D_{0,J}\int_0^{T-\e}\Vert W_K(t,\cdot)\Vert_{L^2(\T)}\Vert {\partial_x} W_K(t,\cdot)\Vert_{L^2(\T)}dt,
\end{align*}
where $C_{sob}$ is the constant of the (one-dimensional) embedding $W^{1,2}(\T)\hookrightarrow \mathscr C^0(\T)$.

We compute, for every $t\in [0,T]$, both $\Vert W_K(t,\cdot)\Vert_{L^2(\T)}$ and $\Vert \n W_K(t,\cdot)\Vert_{L^2(\T)}$.
First,
\begin{align*}
\int_\T W_K^2(t,\cdot)&=\int_\T \left\{
\sum_{k=K}^\infty\frac{ a_k\sin(kx)-b_k\cos(kx)}{k^3}\left(1-k^2te^{-k^2t}-e^{-k^2t}\right)\right\}^2dx
\\&=\pi\sum_{k=K}^\infty\frac{a_k^2+b_k^2}{k^6}\left(1-k^2te^{-k^2t}-e^{-k^2t}\right)^2.
\end{align*}
Second,
\begin{align*}
\int_\T \left|{\partial_x} W_K\right|^2(t,\cdot)&=\int_\T \left\{
\sum_{k=K}^\infty\frac{- a_k\sin(kx)-b_k\cos(kx)}{k^2}\left(1-k^2te^{-k^2t}-e^{-k^2t}\right)\right\}^2dx
\\&=\pi\sum_{k=K}^\infty\frac{a_k^2+b_k^2}{k^4}\left(1-k^2te^{-k^2t}-e^{-k^2t}\right)^2.
\end{align*}
We notice that
\begin{equation}
\int_\T \left|{\partial_x} W_K\right|^2(t,\cdot)\geq K^2\int_\T W_K^2(t,\cdot)\end{equation} or, in other terms, that, for any $t\in [0,T]$,
\begin{equation}
\Vert W_K(t,\cdot)\Vert_{L^2(\T)}\leq\frac1K\Vert {\partial_x}W_K(t,\cdot)\Vert_{L^2(\T)}.\end{equation}
Consequently,
\begin{multline*}
\iint_\TeT W_K^2+\int_0^{T-\e}\Vert W_K(t,\cdot)\Vert_{L^2(\T)}\Vert {\partial_x} W_K(t,\cdot)\Vert_{L^2(\T)}dt\\\leq \iint_\TeT W_K^2+\int_0^{T-\e}\Vert W_K(t,\cdot)\Vert_{L^2(\T)}\Vert {\partial_x} W_K(t,\cdot)\Vert_{L^2(\T)}dt
\\\leq \left(\frac1{K^2}+\frac1K\right)\int_0^{T-\e} ||{\partial_x} W_K||_{L^2(\T)}^2(t,\cdot)
\\\leq \frac{2\pi}K \sum_{k=K}^\infty \frac{a_k^2+b_k^2}{k^4}\int_0^T(1-e^{-k^2t}(k^2t+1))^2dt.
 \end{multline*}
 From the same computations that established \eqref{Chapoutot}, there exists a constant $D_{1,J}$ such that, whenever $K$ is large enough, for any $k\geq K$, there holds
 \begin{equation}\label{Ingrao}
\int_0^T(1-e^{-k^2t}(k^2t+1))^2dt\leq {D_{1,J}},
 \end{equation}
and so, finally, for a constant $C_2$,\begin{equation}\label{Est:JK}
\iint_\TeT J_K^2\leq \frac{C_2}{6}\sum_{k=K}^\infty \frac{a_k^2+b_k^2}{k^4}.
\end{equation}

Combining \eqref{Sol}-\eqref{Est:LK}-\eqref{Est:IK}-\eqref{Est:JK} we finally derive the following lower-bound on the leading term: there exists $C_{\mathrm{lead}}>0$, independent of $K$ such that
\begin{equation}\label{Est:Defin}
\iint_\TeT\left(I_K+J_K+L_K\right)^2\geq C_{\mathrm{lead}}\sum_{k=K}^\infty \frac{a_k^2+b_k^2}{k^2}.\end{equation}

\corr{Proceeding in exactly the same fashion, we obtain the existence of $C_{\mathrm{lead}}'$, independent of $K$ and of $\e>0$, such that 
\begin{equation}\label{Eq:EstDefin2} \iint_{(T-\e;T)\times \T}(I_K+J_K+L_K)^2\leq C_{\mathrm{lead}}'\sum_{k=K}^\infty \frac{a_k^2+b_k^2}{k^2}.\end{equation}
}

\subsection{Formal estimate of the lower order term}
If we assume that \begin{align*}
\dot u_m&\approx\sum_{k=K}^\infty a_k \eta_k+\sum_{k=K}^\infty b_k \zeta_k
\\&=  \sum_{k=K}^\infty a_k\left( u(t,x)\frac1{k^2}\cos(kx)(1-e^{-k^2t})-{\partial_x}u_m\cdot\frac2{k^3}\sin(kx)(1-k^2te^{-k^2t}-e^{-k^2t})\right)
\\&+\sum_{k=K}^\infty b_k\left( u(t,x)\frac1{k^2}\sin(kx)(1-e^{-k^2t})+{\partial_x}u_m\cdot\frac2{k^3}\cos(kx)(1-k^2te^{-k^2t}-e^{-k^2t})\right),
\end{align*}
then, in the very same way, we obtain the existence of a constant $C_{\mathrm{low}}$ such that
\begin{equation}\label{Est:Defin2}
\iint_\TT \left\{\sum_{k=K}^\infty a_k\eta_k+\beta_k\zeta_k\right\}^2\leq C_{\mathrm{low}}\sum_{k=K}^\infty \frac{a_k^2+b_k^2}{k^4}.\end{equation}

\subsection{Strategy and comment for the proof of the asymptotic expansion}
We shall now establish rigorously a strong enough approximation result.
Let us define 
\begin{multline*}Z_K:=\sum_{k=K}^\infty a_k \left( \frac{u_m}{k^2}\cos(kx)(1-e^{-k^2t})-\frac{2{\partial_x u_m}}{k^3}\sin(kx)(1-k^2te^{-k^2t}-e^{-k^2t})\right)
\\+\sum_{k=K}^\infty b_k \left( \frac{u_m}{k^2}\sin(kx)(1-e^{-k^2t})+\frac{2\partial_x u_m}{k^3}\cos(kx)(1-k^2te^{-k^2t}-e^{-k^2t})\right).
\end{multline*}

From \eqref{Est:Defin2}-\eqref{Est:Defin}, we need the following proposition to prove the theorem (see also lemma \ref{Le:Rosecroixsuffices} below, which proves that this proposition is enough):

\begin{proposition}\label{Pr:Rosecroix}There exists a constant $C_{\mathrm{cont}}$ such that, for any $\Upsilon>0$, 
\begin{multline}
\int_0^T \left\Vert \dot u_m-Z_K(t,\cdot)\right\Vert_{W^{1,2}(\T)}^2dt+\Vert \dot u_m(T,\cdot)-Z_K(T,\cdot)\Vert_{L^2(\T)}^2\leq \frac{C_{\mathrm{cont}}}\Upsilon\sum_{k=K}^\infty\frac{a_k^2+b_k^2}{k^4}\\+\Upsilon C_{\mathrm{cont}} \sum_{k=K}^\infty\frac{a_k^2+b_k^2}{k^2}.
\end{multline}
\end{proposition}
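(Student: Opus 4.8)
The plan is to estimate the global error $E:=\dot u_m-Z_K$ in one stroke, rather than mode by mode, so as to avoid the (only approximate) orthogonality of the individual single-mode errors $\eta_k-\tilde\eta_k$. Writing $A_k(t):=1-e^{-k^2t}$ and $B_k(t):=1-k^2te^{-k^2t}-e^{-k^2t}$, both of which vanish at $t=0$ and stay bounded on $[0,T]$, one has $Z_K(0,\cdot)=0=\dot u_m(0,\cdot)$, so by linearity $E$ solves
\[\partial_t E-\partial^2_{xx}E-V_mE=-R_K,\qquad E(0,\cdot)=0,\]
where $R_K$ is the residual produced by inserting $Z_K$ into $\partial_t-\partial^2_{xx}-V_m$ and subtracting $h_Ku_m$. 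First I would compute $R_K$ explicitly. Differentiating the single cosine profile $\tilde\eta_k=\frac{u_m}{k^2}\cos(kx)A_k-\frac{2\partial_xu_m}{k^3}\sin(kx)B_k$ (and its sine analogue), the construction \eqref{Eq:R1}--\eqref{Eq:R2} guarantees that the order-$1$ part reproduces exactly $u_m\cos(kx)$, while the order-$1/k$ part cancels thanks to the identity $A_k-B_k=k^2te^{-k^2t}$. After summation against the $a_k,b_k$, the residual $R_K$ is a short list of terms of the form $c(t,x)\,G_K(t,x)$, where $G_K$ is a high-frequency series $\sum_{k\ge K}(\cdots)k^{-2}\text{ or }k^{-3}(\cos/\sin)(kx)(A_k\text{ or }B_k)(t)$ and $c$ is one of $u_m,\partial_xu_m,V_m$, $mu_m+f$ (all in $L^\infty$) or $\partial^2_{xx}u_m$ (only in $L^p$, by Proposition \ref{Pr:Regularity}).

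Two features of $R_K$ require care. The most dangerous looking terms, appearing at order $1/k^3$, involve $\partial^3_{xxx}u_m$ and $\partial_t\partial_xu_m$, which exceed the $W^{2,p}$-regularity of $u_m$; the key observation is that they occur only in the combination $\partial^3_{xxx}u_m-\partial_t\partial_xu_m=-\partial_x\!\big(mu_m+f(t,x,u_m)\big)$, i.e. as the spatial derivative of the bounded function $mu_m+f$, as follows from differentiating \eqref{Eq:MainBil}. These terms will be treated weakly, integrating the derivative by parts onto the test function. The second feature is a term of order $1/k^2$ carrying the coefficient $\partial^2_{xx}u_m$, which genuinely sits in $L^p\setminus L^\infty$; this is the term responsible for the two-parameter form of the estimate.

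I would then run a weak energy estimate: test the equation for $E$ against $E$, integrate over $\T$, and integrate by parts so that the surviving derivative on the $1/k^3$-terms falls on $E$ and on the oscillating factor, never on $u_m$. Every high-frequency factor $G_K$ (and its $x$-derivative) is measured in $L^2(\T)$ by Parseval, which handles all cross terms exactly and yields $\|G_K(t,\cdot)\|_{L^2(\T)}^2\lesssim\sum_{k\ge K}\frac{a_k^2+b_k^2}{k^4}$ uniformly in $t$; the $L^\infty$ coefficients are pulled out directly, and the $1/k^3$ pairings, after the integration by parts, come with an extra power of $1/k$ and are controlled likewise. The coefficient $\partial^2_{xx}u_m$ is handled exactly as in the proof of Proposition \ref{Pr:Cruise}: by Hölder together with the one-dimensional embedding $W^{1,2}(\T)\hookrightarrow\mathscr C^0(\T)$ (and $L^2\hookrightarrow L^{p'}$ on the bounded torus), the corresponding term is bounded by $\mathfrak M(p)\big(\int_0^T\|G_K\|_{L^2}^2\big)^{1/2}\big(\int_0^T\|E\|_{W^{1,2}}^2\big)^{1/2}\lesssim\big(\sum_{k\ge K}\tfrac{a_k^2+b_k^2}{k^4}\big)^{1/2}\big(\sum_{k\ge K}\tfrac{a_k^2+b_k^2}{k^2}\big)^{1/2}$, where for the last factor I use the crude a priori bound $\int_0^T\|E\|_{W^{1,2}}^2\lesssim\sum_{k\ge K}\frac{a_k^2+b_k^2}{k^2}$ (itself following from a plain energy estimate for $\dot u_m$, using that $h_K$ is supported on frequencies $\ge K$, and from the explicit size of $Z_K$ already computed in the leading-order analysis, cf. \eqref{Est:Defin}--\eqref{Eq:EstDefin2}). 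Young's inequality with weight $\Upsilon$ now splits this product into $\frac{C}{\Upsilon}\sum\frac{a_k^2+b_k^2}{k^4}+\Upsilon C\sum\frac{a_k^2+b_k^2}{k^2}$, which is exactly the right-hand side of Proposition \ref{Pr:Rosecroix}. The remaining steps are routine: the gradient contributions are absorbed into $\|\partial_xE\|_{L^2}^2$, a Grönwall argument (legitimate since $V_m\in L^\infty$) bounds $\sup_t\|E\|_{L^2}^2$ and hence $\int_0^T\|E\|_{L^2}^2$ by the same right-hand side, and the final-time term $\|E(T,\cdot)\|_{L^2}^2$ is read off the left-hand side of the energy identity.

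The main obstacle is exactly the low spatial regularity of $u_m$: the strong form of $R_K$ is not even well defined (it contains $\partial^3_{xxx}u_m$), so the entire estimate must be performed in weak form, and the cancellation $\partial^3_{xxx}u_m-\partial_t\partial_xu_m=-\partial_x(mu_m+f)$ — together with the decision to integrate by parts before estimating — is what makes the argument close. A secondary, more bookkeeping, difficulty is to check that the temporal profiles $A_k,B_k$ and the associated moments $\int_0^Te^{-k^2t}t^\ell\,dt$ contribute no growth in $k$, which is the analogue here of the explicit computations \eqref{Chapoutot}--\eqref{Ingrao}.
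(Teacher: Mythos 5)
Your strategy is essentially the paper's: form the remainder $E=\dot u_m-Z_K$, derive the residual equation it solves, observe that the third-order derivatives of $u_m$ enter only as $\partial_x$ of a bounded quantity (in the paper this is $\partial_x\left(\mathcal L_{V_m}u_m\right)$ with $\mathcal L_{V_m}u_m=f-u_m\,\partial_uf\in L^\infty$, rewritten as a total $x$-derivative plus an admissible correction --- the same cancellation you phrase as $\partial^3_{xxx}u_m-\partial_t\partial_xu_m=-\partial_x(mu_m+f)$), and close with a parabolic energy estimate in which the only delicate source term is the one carrying $\partial^2_{xx}u_m\in L^p\setminus L^\infty$, treated by the one-dimensional Sobolev embedding and Young's inequality with weight $\Upsilon$. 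The paper packages this by splitting $R_K=R_{K,1}+R_{K,2}+R_{K,3}$ and invoking Lemma \ref{Le:EstPot} for each piece; your single weak energy estimate is an equivalent organisation, and your bookkeeping of the temporal profiles matches \eqref{Chapoutot}--\eqref{Ingrao}.

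The one step that does not close as written is the a priori bound $\int_0^T\Vert E\Vert_{W^{1,2}}^2\lesssim\sum_{k\geq K}(a_k^2+b_k^2)/k^2$ that you invoke to handle the $\partial^2_{xx}u_m$-term. A genuinely ``plain'' energy estimate for $\dot u_m$ (testing \eqref{Eq:Dotu} against $\dot u_m$ and using Cauchy--Schwarz on $\iint h_Ku_m\dot u_m$) only yields $\iint|\partial_x\dot u_m|^2\lesssim\sum_{k\geq K}(a_k^2+b_k^2)$, which is larger by an uncontrolled factor: after Young's inequality you would be left with a term $\Upsilon C\sum_{k\geq K}(a_k^2+b_k^2)$ that cannot be dominated by $\Upsilon C'\sum_{k\geq K}(a_k^2+b_k^2)/k^2$ uniformly in the coefficients. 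To obtain the $1/k^2$ gain you must actually exploit the oscillation of $h_K$, e.g.\ by writing $h_K=\partial^2_{xx}H_K$ with $\Vert\partial_xH_K\Vert_{L^2}^2=\pi\sum_{k\geq K}(a_k^2+b_k^2)/k^2$ and integrating by parts in the source term before estimating; this is true but is a separate argument, not a routine step. The paper avoids the issue altogether: it applies the H\"older/Sobolev interpolation to the \emph{explicit} oscillating factor $\Psi_K=\sum_{k\geq K}\left(a_k\cos(kx)+b_k\sin(kx)\right)/k^2$, whose $L^2$ and $W^{1,2}$ norms are computed exactly by Parseval (namely $\tfrac12\sum(a_k^2+b_k^2)/k^4$ and $\tfrac12\sum(a_k^2+b_k^2)/k^2$), so no a priori information on the unknown is needed. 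Alternatively, pairing $(\partial^2_{xx}u_m)G_K$ with $E$ via H\"older with exponents $(2,2,\infty)$ and absorbing $\Vert E\Vert_{W^{1,2}}^2$ into the left-hand side with a fixed small constant would also close the estimate. Either repair keeps the rest of your proof intact.
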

The object of the next lemma is to prove that proposition \ref{Pr:Rosecroix} suffices to obtain theorem \ref{Th:Bgbg}.

\begin{lemma}\label{Le:Rosecroixsuffices}
Proposition \ref{Pr:Rosecroix} implies theorem \ref{Th:Bgbg}.\end{lemma}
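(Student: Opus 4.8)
The plan is to derive a contradiction with the second-order optimality of the non bang-bang maximiser $m^*$ by evaluating the lower bound of Proposition \ref{Pr:Cruise} on the highly oscillating perturbation $h=h_K$ built in Section \ref{Se:Construction}, showing that $\ddot{\mathcal J}(m^*)[h_K,h_K]>0$ for a suitable choice of parameters. First I would record the optimality information: since $h_K$ is supported in $\omega=\{0<m^*<1\}$ and has zero mean (it lies in $E_K\subset\ker T_0^1$, and $T_0^1(H)=\int_\omega H$), both $m^*\pm t h_K$ are admissible for $t$ small, so $t\mapsto \mathcal J(m^*+th_K)$ has an interior maximum at $t=0$; hence $\dot{\mathcal J}(m^*)[h_K]=0$ and, crucially, the second-order necessary condition $\ddot{\mathcal J}(m^*)[h_K,h_K]\le 0$ holds. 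The whole point is to contradict this last inequality.

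Next I would set up the bookkeeping. Write $S_2:=\sum_{k=K}^\infty \frac{a_k^2+b_k^2}{k^2}$ and $S_4:=\sum_{k=K}^\infty \frac{a_k^2+b_k^2}{k^4}$, and note $S_4\le K^{-2}S_2$ and, by the normalisation $\sum(a_k^2+b_k^2)=1$, that $S_2>0$. The rigorous estimates already available for the explicit profile $Z_K$ read $\iint_\TeT|\partial_x Z_K|^2\ge C_{\mathrm{lead}}S_2$ (from \eqref{Est:Defin}, since $\partial_x Z_K=I_K+J_K+L_K$), $\iint_{(T-\e;T)\times\T}|\partial_x Z_K|^2\le C_{\mathrm{lead}}'S_2$ (from \eqref{Eq:EstDefin2}), $\iint_\TT Z_K^2\le C_{\mathrm{low}}S_4$ (from \eqref{Est:Defin2}), and, by the same single-time-slice computation, $\int_\T Z_K^2(T,\cdot)\le C S_4$. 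To transfer these to the genuine $\dot u_m$ I would invoke Proposition \ref{Pr:Rosecroix}, which controls the remainder $R_K:=\dot u_m-Z_K$ through $E_K:=\int_0^T\|R_K(t,\cdot)\|_{W^{1,2}(\T)}^2dt+\|R_K(T,\cdot)\|_{L^2(\T)}^2\le \frac{C_{\mathrm{cont}}}{\Upsilon}S_4+\Upsilon C_{\mathrm{cont}}S_2\le \big(\frac{C_{\mathrm{cont}}}{\Upsilon K^2}+\Upsilon C_{\mathrm{cont}}\big)S_2$ for every $\Upsilon>0$. Using $\|f\|^2\ge \frac12\|g\|^2-\|r\|^2$ for the good gradient term and $(a+b)^2\le 2a^2+2b^2$ for the three bad terms, each ingredient of \eqref{Eq:EstDDJ} turns into an estimate in $S_2$, $S_4$, $E_K$: namely $\iint_\TeT|\partial_x\dot u_m|^2\ge \frac12 C_{\mathrm{lead}}S_2-E_K$, while $\iint_{(T-\e;T)\times\T}|\partial_x\dot u_m|^2\le 2C_{\mathrm{lead}}'S_2+2E_K$, $\iint_\TT\dot u_m^2\le 2C_{\mathrm{low}}S_4+2E_K$ and $\int_\T\dot u_m^2(T,\cdot)\le 2CS_4+2E_K$.

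Feeding these into \eqref{Eq:EstDDJ} yields a bound of the shape $\ddot{\mathcal J}(m^*)[h_K,h_K]\ge A_\delta S_2-B_\delta E_K-D_\delta S_4$, where $A_\delta=\frac12\alpha(1-\delta)C_{\mathrm{lead}}-2\alpha\delta C_{\mathrm{lead}}'$ and $B_\delta,D_\delta$ are constants built from $\alpha$, $\beta(1+\frac1\delta)$, $\gamma$ and the constants above. The decisive feature is that the leading good contribution scales like $S_2$, whereas the lower-order and final-time terms scale like $S_4=O(K^{-2}S_2)$; the only term competing at order $S_2$ is the $\Upsilon C_{\mathrm{cont}}S_2$ piece of $E_K$. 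This dictates the order of the parameter choices: I would first fix $\e\in(0,T)$ (so that the constant $\alpha=\alpha(\e)>0$ of Proposition \ref{Pr:Cruise} is positive, which relies on Lemma \ref{Le:RegAdjoint}), then choose $\delta$ small so that $A_\delta\ge \frac14\alpha C_{\mathrm{lead}}=:A_0>0$ (this freezes $B_\delta,D_\delta$), then choose $\Upsilon$ small so that $B_\delta\Upsilon C_{\mathrm{cont}}\le A_0/4$, and finally choose $K$ large so that $B_\delta\frac{C_{\mathrm{cont}}}{\Upsilon K^2}+D_\delta K^{-2}\le A_0/4$. With these choices $\ddot{\mathcal J}(m^*)[h_K,h_K]\ge \frac12 A_0 S_2>0$, contradicting the second-order condition and forcing $m^*$ to be bang-bang.

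The main obstacle I anticipate is precisely this interplay of scales: the error $E_K$ furnished by Proposition \ref{Pr:Rosecroix} carries a term $\Upsilon C_{\mathrm{cont}}S_2$ of the \emph{same} order as the good term, so one cannot merely let $K\to\infty$; the split of $E_K$ into an $S_4$-part (tamed by taking $K$ large) and an $S_2$-part (tamed by taking $\Upsilon$ small) is what makes the two-parameter tuning succeed, and getting the ordering right — after the preliminary choice of $\delta$ that simultaneously controls the $\beta(1+\frac1\delta)$ blow-up and the near-final-time gradient leak $-\alpha\delta\iint_{(T-\e;T)\times\T}|\partial_x\dot u_m|^2$ — is the delicate point. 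Everything else is a routine assembly of the already-established estimates.
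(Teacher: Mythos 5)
Your proposal is correct and follows essentially the same route as the paper: decompose $\dot u_m=Z_K+R_K$, transfer the explicit bounds \eqref{Est:Defin}, \eqref{Eq:EstDefin2}, \eqref{Est:Defin2} to $\dot u_m$ via Proposition \ref{Pr:Rosecroix}, feed them into \eqref{Eq:EstDDJ}, and tune $\delta$, then $\Upsilon$, then $K$ in that order to make the $S_2$-coefficient positive and absorb the $S_4=O(K^{-2}S_2)$ terms. Your explicit isolation of the $\int_\T Z_K^2(T,\cdot)\leq CS_4$ contribution and of the zero-mean admissibility of $h_K$ are minor points the paper leaves implicit, but the argument is the same.
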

\begin{proof}[Proof of Lemma \ref{Le:Rosecroixsuffices}]
We use proposition \ref{Pr:Cruise} with the perturbation $h_K$ constructed in Section \ref{Se:Construction}. We study the right-hand side of \eqref{Eq:EstDDJ}. On the one-hand, we have \begin{align*}
\iint_\TeT|{\partial_x} \dot u_m|^2&\geq\frac12 \iint_\TeT |{\partial_x} Z_K|^2-\iint_\TT\left|{\partial_x} \dot u_m-{\partial_x} Z_K\right|^2dtdx
\\&\geq \frac{C_{\mathrm{lead}}}{2}\sum_{k=K}^\infty \frac{a_k^2+b_k^2}{k^2}-\frac{C_{\mathrm{cont}}}{\Upsilon} \sum_{k=K}^\infty\frac{a_k^2+b_k^2}{k^4}-{\Upsilon C_{\mathrm{cont}}}\sum_{k=K}^\infty\frac{a_k^2+b_k^2}{k^2}
\end{align*}
and 
\corr{
\begin{align*}
\iint_{(T-\e;T)\times \T}|{\partial_x} \dot u_m|^2&\leq2 \iint_{(T-\e;T)} |{\partial_x} Z_K|^2+2\iint_\TT\left|{\partial_x} \dot u_m-{\partial_x} Z_K\right|^2dtdx
\\&\leq 2{C_{\mathrm{lead}}'}{}\sum_{k=K}^\infty \frac{a_k^2+b_k^2}{k^2}+2\frac{C_{\mathrm{cont}}}{\Upsilon} \sum_{k=K}^\infty\frac{a_k^2+b_k^2}{k^4}+2{\Upsilon C_{\mathrm{cont}}}\sum_{k=K}^\infty\frac{a_k^2+b_k^2}{k^2}
\end{align*}
}

On the other hand, 
\begin{align*}
\iint_\TT \dot u_m^2&\leq  2 \iint_\TT | Z_K|^2+2\iint_\TT\left| \dot u_m- Z_K\right|^2dtdx
\\&\leq 2{C_{\mathrm{low}}}{}\sum_{k=K}^\infty \frac{a_k^2+b_k^2}{k^4}+\frac{2C_{\mathrm{cont}}}{\Upsilon} \sum_{k=K}^\infty\frac{a_k^2+b_k^2}{k^4}+2\Upsilon{C_{\mathrm{cont}}}\sum_{k=K}^\infty\frac{a_k^2+b_k^2}{k^2}
\end{align*}
for $K$ large enough, from \eqref{Est:Defin2},  and Proposition \ref{Pr:Rosecroix},
and, in the same way,
\begin{align*}
\int_\T \dot u_m^2(T,\cdot)&\leq \frac{2C_{\mathrm{cont}}}{\Upsilon} \sum_{k=K}^\infty\frac{a_k^2+b_k^2}{k^4}+2{\Upsilon C_{\mathrm{cont}}}\sum_{k=K}^\infty\frac{a_k^2+b_k^2}{k^2}\end{align*}
Consequently, for any $\delta>0$ \corr{
\begin{align*}
\ddot{\mathcal J}(m^*)[h_K,h_K]&\geq\frac{\alpha(1-\delta) C_{\mathrm{lead}}-4\alpha\delta C_{\mathrm{lead}}'}{2}\sum_{k=K}^\infty\frac{a_k^2+b_k^2}{k^2}
+\frac{(-\alpha(1-\delta)-2\alpha\delta) C_{\mathrm{cont}}}{\Upsilon} \sum_{k=K}^\infty\frac{a_k^2+b_k^2}{k^4}
\\&{+(-\alpha(1-\delta)-2\alpha\delta) \Upsilon C_{\mathrm{cont}}}\sum_{k=K}^\infty\frac{a_k^2+b_k^2}{k^2}
-2\beta\left(1+\frac1\delta\right){C_{\mathrm{low}}}{}\sum_{k=K}^\infty \frac{a_k^2+b_k^2}{k^4}
\\&-2\frac{\beta\left(1+\frac1\delta\right) C_{\mathrm{cont}}}{\Upsilon} \sum_{k=K}^\infty\frac{a_k^2+b_k^2}{k^4}-2\beta\left(1+\frac1\delta\right)\Upsilon{C_{\mathrm{cont}}}\sum_{k=K}^\infty\frac{a_k^2+b_k^2}{k^2}
\\&-2\gamma{C_{\mathrm{low}}}{}\sum_{k=K}^\infty \frac{a_k^2+b_k^2}{k^4}-2\gamma\frac{C_{\mathrm{cont}}}{\Upsilon} \sum_{k=K}^\infty\frac{a_k^2+b_k^2}{k^4}-2\gamma\Upsilon{C_{\mathrm{cont}}}\sum_{k=K}^\infty\frac{a_k^2+b_k^2}{k^2}
\\&=\left(\sum_{k=K}^\infty\frac{a_k^2+b_k^2}{k^2}\right)
\left\{\frac{\alpha(1-\delta) C_{\mathrm{lead}}-4\alpha\delta C_{\mathrm{lead}}'}{2}-\alpha(1+\delta) \Upsilon C_{\mathrm{cont}}-2\beta\left(1+\frac1\delta\right) \Upsilon C_{\mathrm{cont}}-2\gamma \Upsilon C_{\mathrm{cont}}\right\}
\\&-\left(\sum_{k=K}^\infty\frac{a_k^2+b_k^2}{k^4}\right)\left\{\frac{\alpha(1+\delta) C_{\mathrm{cont}}}\Upsilon-2\beta\left(1+\frac1\delta\right) C_{\mathrm{low}}-2\frac{\beta\left(1+\frac1\delta\right) C_{\mathrm{cont}}}\Upsilon-2\gamma  C_{\mathrm{low}}-2\gamma \Upsilon  C_{\mathrm{cont}}\right\}
\\&\geq \left(\sum_{k=K}^\infty\frac{a_k^2+b_k^2}{k^2}\right)
\left\{\frac{\alpha(1-\delta) C_{\mathrm{lead}}-4\alpha\delta C_{\mathrm{lead}}'}2-\alpha(1+\delta) \Upsilon C_{\mathrm{cont}}-2\beta\left(1+\frac1\delta\right) \Upsilon C_{\mathrm{cont}}-2\gamma \Upsilon C_{\mathrm{cont}}\right\}
\\&-\left(\sum_{k=K}^\infty\frac{a_k^2+b_k^2}{k^2}\right)\frac{\left|\frac{\alpha(1+\delta) C_{\mathrm{cont}}}\Upsilon-2\beta\left(1+\frac1\delta\right) C_{\mathrm{low}}-2\frac{\beta\left(1+\frac1\delta\right) C_{\mathrm{cont}}}\Upsilon-2\gamma  C_{\mathrm{low}}-2\gamma \Upsilon  C_{\mathrm{cont}}\right|}{K^2}
\end{align*}
We first pick $\delta>0$ small enough to ensure that 
\[\alpha(1-\delta)C_{\mathrm{lead}}-4\alpha\delta C_{\mathrm{lead}}'>0.\]
}{
We then choose $\Upsilon>0$ small enough so that
$$\alpha'':=\frac{\alpha(1-\delta) C_{\mathrm{lead}}-4\delta C_{\mathrm{lead}}'}2-\alpha(1+\delta) \Upsilon C_{\mathrm{cont}}-2\beta\left(1+\frac1\delta\right) \Upsilon C_{\mathrm{cont}}-2\gamma \Upsilon C_{\mathrm{cont}}>0.$$ We define 
\[\beta'':=\left|\frac{\alpha(1+\delta) C_{\mathrm{cont}}}\Upsilon-2\beta\left(1+\frac1\delta\right) C_{\mathrm{low}}-2\frac{\beta\left(1+\frac1\delta\right) C_{\mathrm{cont}}}\Upsilon-2\gamma  C_{\mathrm{low}}-2\gamma \Upsilon  C_{\mathrm{cont}}\right|.\]}
Thus we have the lower bound
\begin{equation}
\ddot{\mathcal J}(m^*)[h_K,h_K]\geq \left\{\alpha''-\frac{\beta''}{K^2}\right\} \left(\sum_{k=K}^\infty\frac{a_k^2+b_k^2}{k^2}\right).\end{equation} We pick $K$ large enough to ensure that 
\[\alpha''-\frac{\beta''}{K^2}\geq \frac{\alpha''}{2}\] and  it follows that 
\[\ddot{\mathcal J}(m^*)[h_K,h_K]>0,\] in contradiction with the optimality of $m^*$. The conclusion of the Theorem follows: every maximiser must be a bang-bang function.

\end{proof}
The rest of this section is devoted to the proof of proposition \ref{Pr:Rosecroix}.
\subsection{Proof of proposition \ref{Pr:Rosecroix}}
\begin{proof}[Proof of proposition \ref{Pr:Rosecroix}]
We recall that, in its expanded form, $Z_K$ writes
\begin{multline*}Z_K:=\sum_{k=K}^\infty a_k \left( \frac{u_m}{k^2}\cos(kx)(1-e^{-k^2t})-\frac{2{\partial_x u_m}}{k^3}\sin(kx)(1-k^2te^{-k^2t}-e^{-k^2t})\right)
\\+\sum_{k=K}^\infty b_k \left( \frac{u_m}{k^2}\sin(kx)(1-e^{-k^2t})+\frac{2\partial_x u_m}{k^3}\cos(kx)(1-k^2te^{-k^2t}-e^{-k^2t})\right)
\end{multline*}
We define the remainder term\[R_K:=\dot u_m-Z_K.\]

The computations needed in order to determine an explicit equation for $R_K$ are rather lengthy. We split them up.

Define $$T_K^1:=u_m\sum_{k=K}^\infty\frac{a_k\cos(kx)+b_k\sin(kx)}{k^2}(1-e^{-k^2t}).$$ We first have 
\begin{multline*}
{\partial_t}T_K^1={\partial_t}u_m\cdot \sum_{k=K}^\infty \frac{a_k\cos(kx)+b_k\sin(kx)}{k^2}(1-e^{-k^2t})+u_m\sum_{k=K}^\infty \left(a_k\cos(kx)+b_k\sin(kx)\right)e^{-k^2t}.\end{multline*}
Second, we have 
\begin{equation*}
{\partial_x}T_K^1={\partial_x}u_m\cdot \sum_{k=K}^\infty \frac{a_k\cos(kx)+b_k\sin(kx)}{k^2}(1-e^{-k^2t})-u_m\sum_{k=K}^\infty \frac{a_k\sin(kx)-b_k\cos(kx)}k (1-e^{-k^2t})
\end{equation*}
so that 
\begin{multline*}
{\partial^2_{xx}}T_K^1={\partial^2_{xx}}u_m\cdot\sum_{k=K}^\infty \frac{a_k\cos(kx)+b_k\sin(kx)}{k^2}(1-e^{-k^2t})-u_m\sum_{k=K}^\infty \left(a_k \cos(kx)+b_k\sin(kx)\right)(1-e^{-k^2t})\\-2{\partial_x}u_m\cdot\sum_{k=K}^\infty\frac{a_k\sin(kx)-b_k\cos(kx)}{k} (1-e^{-k^2t}).
\end{multline*}
Hence, introducing the  differential operator 
\[\mathcal L_{V_m}:\Phi\mapsto \partial_t \Phi-{\partial^2_{xx}} \Phi-V_m\Phi\] we obtain 
\begin{multline}\label{Eq:Premi}
\mathcal L_{V_m} T_K^1=\left(\mathcal L_{V_m} u_m\right)\sum_{k=K}^\infty \frac{a_k\cos(kx)+b_k\sin(kx)}{k^2}(1-e^{-k^2t})+u_m\sum_{k=K}^\infty (a_k \cos(kx)+b_k\sin(kx))\\+2{\partial_x}u_m\cdot\sum_{k=K}^\infty \frac{a_k\sin(kx)-b_k\cos(kx)}{k}\left(1-e^{-k^2t}\right)
\end{multline}

Second, we set
$$T_K^2:=-{2{\partial_x}u_m}\cdot\sum_{k=K}^\infty \frac{a_k\sin(kx)-b_k\cos(kx)}{k^3}(1-k^2te^{-k^2t}-e^{-k^2t}).$$ We obtain 
\begin{multline*}{\partial_t}T_K^2=-2{\partial_x} \left({\partial_t}u_m\right)\cdot\sum_{k=K}^\infty \frac{a_k\sin(kx)-b_k\cos(kx)}{k^3}(1-k^2te^{-k^2t}-e^{-k^2t})
\\-2{\partial_x}u_m\cdot\sum_{k=K}^\infty \left(a_k \sin(kx)-b_k\cos(kx)\right)kte^{-k^2t} .\end{multline*}  Let us define, in order to alleviate the upcoming computations, 
$$\p(s):=1-se^{-s}-e^{-s}.$$
Similarly we obtain 
\begin{equation*}
{\partial_x}T_K^2=-2{\partial^2_{xx}}u_m\cdot\sum_{k=K}^\infty\frac{a_k\sin(kx)-b_k\cos(kx)}{k^3}\p(k^2t)-2{\partial_x}u_m\cdot\sum_{k=K}^\infty \frac{a_k\cos(kx)+b_k\sin(kx)}{k^2} \p(k^2t)
\end{equation*}
as well as (the next equation should be understood in the $W^{-1,2}(\O)$ sense)
\begin{multline*}
{\partial^2_{xx}}T_K^2=-2{\partial_x}\left({\partial^2_{xx}}u_m\right)\cdot \sum_{k=K}^\infty\frac{a_k\sin(kx)-b_k\cos(kx)}{k^3}\p(k^2t)
\\-4{\partial^2_{xx}}u_m\cdot\sum_{k=K}^\infty \frac{a_k\cos(kx)+b_k\sin(kx)}{k^2} \p(k^2t)
\\+2{\partial_x}u_m\cdot\sum_{k=K}^\infty \frac{a_k\sin(kx)+b_k\cos(kx)}{k}\p(k^2t).
\end{multline*}

Combining these bricks we are left with 
\begin{align*}
\mathcal L_{V_m}T_K^2&=-2{\partial_x}\left(\mathcal L_{V_m} u_m\right)\cdot\sum_{k=K}^\infty \frac{a_k\sin(kx)-b_k\cos(kx)}{k^3}\p(k^2t)
\\&-2{\partial_x}u_m\cdot\sum_{k=K}^\infty \left(a_k \sin(kx)-b_k\cos(kx)\right)\left(kte^{-k^2t} +\frac{\p(k^2t)}k\right)
\\&-4{\partial^2_{xx}}u_m\cdot \sum_{k=K}^\infty \frac{a_k\cos(kx)+b_k\sin(kx)}{k^2} \p(k^2t)
\\&=-2{\partial_x}\left(\mathcal L_{V_m} u_m\right)\cdot\sum_{k=K}^\infty \frac{a_k\sin(kx)-b_k\cos(kx)}{k^3}\p(k^2t)
\\&-2{\partial_x}u_m\cdot\sum_{k=K}^\infty \frac{a_k \sin(kx)-b_k\cos(kx)}k\left(1-e^{-k^2t}\right)
\\&-4{\partial^2_{xx}}u_m\cdot\sum_{k=K}^\infty \frac{a_k\cos(kx)+b_k\sin(kx)}{k^2} \p(k^2t),
\end{align*}
where, to obtain the last equality, we simply wrote
\[kte^{-k^2t}+\frac{\p(k^2t)}k=kte^{-k^2t}+\frac1k-kte^{-k^2t}-\frac{e^{-k^2t}}k=\frac{1-e^{-k^2t}}k.
\]

Now, it follows that $R_K$ satisfies
\begin{align*}
\mathcal L_{V_m} R_K&=\mathcal L_{V_m} \dot u_m-\mathcal L_{V_m} T_K^1-\mathcal L_{V_m} T_K^2
\\&=u_m\sum_{k=K}^\infty a_k \cos(kx)+b_k\sin(kx)
-\left(\mathcal L_{V_m} u\right)\sum_{k=K}^\infty \frac{a_k\cos(kx)+b_k\sin(kx)}{k^2}(1-e^{-k^2t})
\\&-u_m\sum_{k=K}^\infty (a_k \cos(kx)+b_k\sin(kx))
\\&-2{\partial_x}u_m\cdot\sum_{k=K}^\infty \frac{a_k\sin(kx)-b_k\cos(kx)}{k}\left(1-e^{-k^2t}\right)
\\&+2{\partial_x}\left(\mathcal L_{V_m} u_m\right)\cdot\sum_{k=K}^\infty \frac{a_k\sin(kx)-b_k\cos(kx)}{k^3}\p(k^2t)
\\&+2{\partial_x}u_m\cdot\sum_{k=K}^\infty \frac{a_k \sin(kx)-b_k\cos(kx)}k\left(1-e^{-k^2t}\right)
\\&+4{\partial^2_{xx}}u_m\cdot\sum_{k=K}^\infty \frac{a_k\cos(kx)+b_k\sin(kx)}{k^2} \p(k^2t)
\\&=-\left(\mathcal L_{V_m} u_m\right)\sum_{k=K}^\infty \frac{a_k\cos(kx)+b_k\sin(kx)}{k^2}(1-e^{-k^2t})
\\&+2{\partial_x}\left(\mathcal L_{V_m} u_m\right)\cdot\sum_{k=K}^\infty \frac{a_k\sin(kx)-b_k\cos(kx)}{k^3}\p(k^2t)
\\&+4{\partial^2_{xx}}u_m\cdot \sum_{k=K}^\infty \frac{a_k\cos(kx)+b_k\sin(kx)}{k^2} \p(k^2t)
\end{align*}
We need one more transformation before this is in a workable form: we observe that (still in the $W^{-1,2}(\O)$ sense) we have
\begin{align*}
{\partial_x}\left(\mathcal L_{V_m} u_m\right)\sum_{k=K}^\infty \frac{a_k\sin(kx)-b_k\cos(kx)}{k^3}\p(k^2t)
=&{\partial_x} \left\{ \mathcal L_{V_m} u_m \sum_{k=K}^\infty \frac{a_k\sin(kx)-b_k\cos(kx)}{k^3} \p(k^2t)\right\}
\\&-\left(\mathcal L_{V_m} u_m\right)\sum_{k=K}^\infty \frac{a_k\cos(kx)+b_k\sin(kx)}{k^2}\p(k^2t),
\end{align*}
and the equation we shall be working on is then given by 

\begin{multline}\displaystyle \mathcal L_{V_m} R_K=-\left(\mathcal L_{V_m} u_m\right)\sum_{k=K}^\infty \frac{a_k\cos(kx)+b_k\sin(kx)}{k^2}(1-e^{-k^2t}+2\p(k^2t))
\\+2{\partial_x} \left\{ \mathcal L_{V_m} u_m \sum_{k=K}^\infty \frac{a_k\sin(kx)-b_k\cos(kx)}{k^3} \p(k^2t)\right\}
\\+4{\partial^2_{xx}}u_m\cdot\sum_{k=K}^\infty \frac{a_k\cos(kx)+b_k\sin(kx)}{k^2} \p(k^2t)
 \end{multline}

\paragraph{Estimating $R_K$}
We introduce the notation 
$$\phi(t):=1-e^{-s}+2\p(s).$$ As $\p$ is bounded, so is $\phi$. Let $R_{K,1}\,, R_{K,2}\,, R_{K,3}$ be the solutions of
\begin{equation}
\begin{cases}
\mathcal L_{V_m} R_{K,1}=\displaystyle-\left(\mathcal L_{V_m} u_m\right)\sum_{k=K}^\infty \frac{a_k\cos(kx)+b_k\sin(kx)}{k^2}\phi(k^2t)\,, 
\\
\\ \mathcal L_{V_m} R_{K,2}=\displaystyle2{\partial_x} \left\{ \mathcal L_{V_m} u_m \sum_{k=K}^\infty \frac{a_k\sin(kx)-b_k\cos(kx)}{k^3} \p(k^2t)\right\}\,, 
\\
\\ \mathcal L_{V_m} R_{K,3}=\displaystyle-4{\partial^2_{xx}}u_m\cdot\sum_{k=K}^\infty \frac{a_k\cos(kx)+b_k\sin(kx)}{k^2} \p(k^2t).\end{cases}\end{equation}

Obviously, 
$$R_K=R_{K,1}+R_{K,2}+R_{K,3},$$ and so, up to a multiplicative constant $E_1$ we have 
\begin{multline}\label{Eq:BP}\iint_\TT|{\partial_x} R_K|^2+\iint_\TT R_K^2+\int_\T R_K^2(T,\cdot)\\\leq E_1\sum_{j=1}^3\left(\iint_\TT |{\partial_x} R_{K,j}|^2+\iint_\TT R_{K,j}^2+\int_\T R_{K,j}^2(T,\cdot)\right).\end{multline}

We shall now estimate each of these three functions. All the upcoming estimates rely on the following, standard, parabolic regularity result (proved in appendix \ref{Ap:Pa}):
\begin{lemma}\label{Le:EstPot}
Let $f\in L^2(\O)$ and $g\in L^2(\O)$. Let $q\in L^\infty(\TT)$. Let $\theta$ be the solution of 
\begin{equation}\begin{cases}
\partial_t\theta-\partial^2_{xx}\theta-V\theta=\partial_x f+qg&\text{ in } \TT\,, 
\\ \theta(0,\cdot)=0.\end{cases}\end{equation} Then
\begin{equation}\iint_\TT |\partial_x\theta|^2+\iint_\TT\theta^2+\int_\T \theta^2(T,\cdot)\leq C(V,q) \iint_\TT (f^2+g^2).\end{equation}
\end{lemma}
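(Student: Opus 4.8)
The plan is to establish the estimate by a standard parabolic energy argument, the only genuine subtlety being that the source $\partial_x f$ lives in $L^2(0,T;W^{-1,2}(\T))$ rather than in $L^2(\TT)$, so the computation must be read in the variational (Lions) framework. First I would recall that, since $V,q\in L^\infty(\TT)$ and $f,g\in L^2(\TT)$, the equation admits a unique weak solution $\theta\in L^2(0,T;W^{1,2}(\T))$ with $\partial_t\theta\in L^2(0,T;W^{-1,2}(\T))$; in particular $\theta\in\mathscr C([0,T];L^2(\T))$ and the duality pairing satisfies
\[
\langle \partial_t\theta,\theta\rangle_{W^{-1,2},W^{1,2}}=\frac12\frac{d}{dt}\Vert\theta(t,\cdot)\Vert_{L^2(\T)}^2
\]
in the sense of distributions on $(0,T)$. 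This is what legitimises the formal manipulations below.

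The core computation is to test the equation against $\theta$ itself. Pairing with $\theta(t,\cdot)$ and integrating by parts in space on the torus (no boundary terms) yields, for a.e.\ $t$,
\[
\frac12\frac{d}{dt}\int_\T \theta^2+\int_\T |{\partial_x}\theta|^2=\int_\T V\theta^2-\int_\T f\,{\partial_x}\theta+\int_\T qg\theta,
\]
where I have used $\langle {\partial_x} f,\theta\rangle=-\int_\T f\,{\partial_x}\theta$. I would then bound the right-hand side by Young's inequality: the estimate $\left|\int_\T f\,{\partial_x}\theta\right|\leq\frac14\int_\T|{\partial_x}\theta|^2+\int_\T f^2$ lets me absorb a quarter of the Dirichlet energy into the left-hand side, while $\int_\T V\theta^2\leq\Vert V\Vert_{L^\infty}\int_\T\theta^2$ and $\int_\T qg\theta\leq\frac{\Vert q\Vert_{L^\infty}}{2}\int_\T(g^2+\theta^2)$ contribute only zeroth-order terms. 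Setting $y(t):=\int_\T\theta^2(t,\cdot)$, this produces a differential inequality of the form
\[
\frac12 y'(t)+\frac34\int_\T|{\partial_x}\theta|^2\leq C_1\,y(t)+\int_\T f^2(t,\cdot)+\frac{\Vert q\Vert_{L^\infty}}{2}\int_\T g^2(t,\cdot),
\]
with $C_1=\Vert V\Vert_{L^\infty}+\tfrac12\Vert q\Vert_{L^\infty}$ depending only on $V$ and $q$.

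From here the three pieces of the conclusion follow in turn. Discarding the nonnegative gradient term and applying Gronwall's lemma with $y(0)=0$ bounds $\sup_{t\in[0,T]}y(t)\leq e^{2C_1T}\,C\iint_\TT(f^2+g^2)$; this controls both $\int_\T\theta^2(T,\cdot)=y(T)$ and, after integrating in $t$, $\iint_\TT\theta^2\leq T\sup_{[0,T]}y$. Then, integrating the displayed differential inequality over $(0,T)$ and using $y(T)\geq0=y(0)$ gives $\frac34\iint_\TT|{\partial_x}\theta|^2\leq C_1\iint_\TT\theta^2+\iint_\TT f^2+\tfrac{\Vert q\Vert_{L^\infty}}{2}\iint_\TT g^2$, and substituting the bound on $\iint_\TT\theta^2$ already obtained controls the gradient term. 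Collecting the three contributions yields a single constant $C(V,q)$ (monotone in $\Vert V\Vert_{L^\infty},\Vert q\Vert_{L^\infty}$ and growing like $e^{2C_1T}$) for which the claimed inequality holds. The only step requiring real care is the rigorous justification of the energy identity for data in $W^{-1,2}$, which is exactly why I would open the proof by placing the problem in the variational framework; everything after that is a routine combination of Young's and Gronwall's inequalities.
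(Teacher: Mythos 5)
Your argument is correct and follows essentially the same route as the paper: test against $\theta$, integrate by parts on the torus, absorb the $f\,\partial_x\theta$ term via Young's inequality, and close with a Gronwall-type argument (the paper runs Gronwall via the integrating factor $e^{-Mt}$, which controls the $L^\infty_tL^2_x$ norm and the space-time gradient in one pass, whereas you do it in two steps; this is immaterial). Your opening remarks on the variational framework and the justification of the energy identity for the $W^{-1,2}$ source are a welcome precision that the paper leaves implicit.
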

We can move back to estimating $R_{K,j}$, for $j=1,2,3$. To estimate $R_{K,1}$ we apply Lemma \ref{Le:EstPot} with 
$$f=0\,, q=-\mathcal L_{V_m} u_m\,, g=\sum_{k=K}^\infty \frac{a_k\cos(kx)+b_k\sin(kx)}{k^2}\phi(k^2t).$$ Thus we obtain 

\begin{multline}\iint_\TT |{\partial_x} R_{K,1}|^2+\iint_\TT R_{K,1}^2+\int_\TT R_{K,1}(T,\cdot)^2\\\leq C_1 \iint_\TT \left\{\sum_{k=K}^\infty \frac{a_k\cos(kx)+b_k\sin(kx)}{k^2}\phi(k^2t)\right\}^2dtdx.\end{multline} Since $\phi$ is bounded by a constant, which we take equal to 1 up to changing the value of $C_1$, we obtain

\begin{equation}\label{Est:RK1}\iint_\TT |{\partial_x} R_{K,1}|^2+\iint_\TT R_{K,1}^2+\int_\TT R_{K,1}(T,\cdot)^2\leq C_1  \sum_{k=K}^\infty \frac{a_k^2+b_k^2}{k^4}.\end{equation}
For $R_{K,2}$, it suffices to apply lemma \ref{Le:EstPot} with 
$$f=2\mathcal L_{V_m} u_m\sum_{k=K}^\infty \frac{a_k\sin(kx)-b_k\cos(kx)}{k^3}\p(k^2t)\,, q=g=0$$ and we obtain, since $\mathcal L_{V_m} u_m\in L^\infty(\TT)$, the existence of a constant $C_2$ such that
\begin{equation}\label{Est:RK2}\iint_\TT |{\partial_x} R_{K,2}|^2+\iint_\TT R_{K,2}^2+\int_\T R_{K,2}(T,\cdot)^2\leq C_2  \sum_{k=K}^\infty \frac{a_k^2}{k^6}.\end{equation}

The case of $R_{K,3}$ is, on the other hand, trickier, but can be handled similarly. We first recall that, from proposition \ref{Pr:Regularity}, for any $p\in [1;+\infty)$, 
\[\mathfrak M(p):=\sup_{t\in[0,T]}\Vert u_m(t,\cdot)\Vert_{W^{2,p}(\T)}<\infty.\]
From standard $W^{1,2}$ parabolic estimates, we obtain, for a constant $C_3$,
\begin{multline}
\iint_\TT |{\partial_x} R_{K,3}|^2+\iint_\TT R_{K,3}^2+\int_\T R_{K,3}^2(T,\cdot)\\\leq C_3\iint_\OT \left({\partial^2_{xx}}u_m\right)^2\left\{\sum_{k=K}^\infty \frac{a_k\cos(kx)+b_k\sin(kx)}{k^2} \p(k^2t)\right\}^2.\end{multline}
Thus, up to replacing $C_3$ with $C_3\Vert \varphi\Vert_{L^\infty}$, we obtain
\begin{multline}
\int_\T R_{K,3}^2(T,\cdot)+\iint_\TT R_{K,3}^2+\iint_\TT |{\partial_x} R_{K,3}|^2\\\leq C_3\iint_\OT \left({\partial^2_{xx}}u_m\right)^2\left\{\sum_{k=K}^\infty \frac{a_k\cos(kx)+b_k\sin(kx)}{k^2} \right\}^2.\end{multline}
Define \[\Psi_K:=\sum_{k=K}^\infty \frac{a_k\cos(kx)+b_k\sin(kx)}{k^2}.\]
From the H\"{o}lder inequality, we obtain
\begin{align*}
\int_\T R_{K,3}^2(T,\cdot)+\iint_\TT R_{K,3}^2+\iint_\TT |{\partial_x} R_{K,3}|^2&\leq C_3\iint_\TT \left({\partial^2_{xx}}u_m\right)^2\Psi_K^2
\\&\leq \underbrace{C_3 \mathfrak M(4)}_{=:C_3'}\int_0^T \Vert {\partial_x} \Psi_K(t,\cdot)\Vert_{L^2(\T)}\Vert \Psi_K(t,\cdot)\Vert_{L^2(\T)}dt
\\&\leq 2{C_3'}\left\{\Upsilon\iint_\TT |{\partial_x} \Psi_K|^2+\frac1\Upsilon\iint_\TT \Psi_K^2\right\}.
\end{align*}
However, since 
\[{\partial_x}\Psi_K=\sum_{k=K}^\infty \frac{-a_k\sin(kx)+b_k\cos(kx)}{k}\] we obtain, on the one-hand, 
\[\iint_\TT \Psi_K^2=\frac12\sum_{k=K}^\infty \frac{a_k^2+b_k^2}{k^4},\] and, on the other hand, 
\[\iint_\TT |{\partial_x} \Psi_K|^2=\frac12\sum_{k=K}^\infty \frac{a_k^2+b_k^2}{k^2}.\]
Finally, we obtain the estimate
\begin{equation}\label{Est:RK3}
\int_\T R_{K,3}^2(T,\cdot)+\iint_\TT R_{K,3}^2+\iint_\TT |{\partial_x} R_{K,3}|^2\leq \Upsilon C_3'\sum_{k=K}^\infty \frac{a_k^2+b_k^2}{k^2}+\frac{C_3'}\Upsilon\sum_{k=K}^\infty \frac{a_k^2+b_k^2}{k^4}.\end{equation}

Hence, summing \eqref{Est:RK1}-\eqref{Est:RK2}-\eqref{Est:RK3} and plugging these estimates in \eqref{Eq:BP}, there exists $C_{\mathrm{cont}}$ such that

\[\iint_\TT |{\partial_x} R_{K}|^2+\iint_\TT R_{K}^2+\int_\T R_{K}^2(T,\cdot)\leq  \frac{C_{\mathrm{cont}}}\Upsilon\sum_{k=K}^\infty\frac{a_k^2+b_k^2}{k^4}+\Upsilon C_{\mathrm{cont}}\sum_{k=K}^\infty\frac{a_k^2+b_k^2}{k^2}
,\]
thus concluding the proof.
\end{proof}
Thus proposition \ref{Pr:Rosecroix} is proved. As, from lemma \ref{Le:Rosecroixsuffices}, proposition \ref{Pr:Rosecroix} implies theorem \ref{Th:Bgbg}, theorem \ref{Th:Bgbg} is established.

\section{Conclusion}
\subsection{Possible generalisations of Theorem \ref{Th:Bgbg}}
\subsubsection{General comment about generalisations}Throughout these generalisations, we still assume that we are working with an initial condition $u^0\in \mathscr C^2(\T)$ with $\inf_\T u^0>0$.
Let us draw attention to the fact that the core idea of the proof of theorem \ref{Th:Bgbg} consists in combining two ingredients: the first one is proposition \ref{Pr:Cruise}, which gives a lower estimate of $\ddot{\mathcal J}$, and the second one is a two scale asymptotic expansion. This second part is independent of the functionals $j_1\,, j_2$, the monotonicity of which are only used in the first step. To generalise our model to other types of interactions, some assumptions will ensure that proposition \ref{Pr:Cruise} remain valid. A crucial part in deriving the conclusion however is estimate \eqref{Est:LK}. To obtain it, we used the fact that in our bilinear model we have $\inf_{\TT}|u_m|>0$. This plays a role when using the fact that $\dot u_m$ solves 
\[{\partial_t}\dot u_m-{\partial^2_{xx}} \dot u_m-V_m\dot u_m=u_m h.\] In other types of model, $\dot u_m$ solves (generically) an equation of the form
\[{\partial_t} \dot u_m-{\partial^2_{xx}} \dot u_m-V_m\dot u_m=F(u_m,m) h,\] and other assumptions will thus ensure that $\inf_\TT F(u_m,m)>0$. 

\subsubsection{Approximations of time dependent controls}\label{Se:TDM}
Although we can not handle general time-dependencies, see section \ref{Se:TimeDependent}, we would nonetheless like to draw attention to the fact that our method covers some approximations of time-varying controls.  Consider an integer $N$ and a family of functions $\{\phi_i\}_{i=1,\dots,N}$ satisfying the following conditions:
\begin{equation}\label{A1}\tag{$\bold{A}_1$}\text{ For any $i\in \{1,\dots,N\}$, }\phi_i\in\mathscr C^1([0,T],\R)\end{equation}
\begin{equation}\label{A2}\tag{$\bold{A}_2$}\text{For any $i\in \{0,\dots,N\}$, }\inf_{[0,T]}\left|\phi_i\right|>0.\end{equation}

 Let 
\begin{multline}\mathcal M_N(\T):=\Big\{m\in L^\infty(\TT) \text{ that write }m=\sum_{i=1}^{N}\phi_{i}(t)m_{i} \\\text{ where for any $i\in \{1,\dots,N\}\,, m_i\in \mathcal M(\T)$}\Big\}.\end{multline} 
A generic  $\overline m\in \mathcal M_N(\T)$ is identified  with the associated $N$-tuple $(m_1,\dots,m_N)\in \mathcal M(\T)^N$.  A function $\overline m\in\mathcal M_N(T)$ is called \emph{bang-bang} if for any $i\in \{1,\dots,N\}$ $m_i$ is a bang-bang function.

We can define $u_{\overline m}$ as the solution of \eqref{Eq:MainBil} with $m$ replaced with $\overline m$, and the optimisation problem is

\begin{equation}\label{Eq:PvN}\tag{$\bold P_{\mathrm{parab}}^N$}\fbox{$\displaystyle
\max_{\overline m\in \mathcal M_N(\T)}\mathcal J(\overline m)$,}\end{equation} where $\mathcal J$ still satisfies  assumption \eqref{Eq:HJ} of Theorem \ref{Th:Bgbg}. We claim that, up to minor adaptations of our proof, the following result holds:
\begin{theorem}\label{Th:BgbgN}
Assume $\mathcal J$ satisfies \eqref{Eq:HJ} and $\phi=\{\phi_i\}_{i=1\,, \dots,N}$ satisfies \eqref{A1}-\eqref{A2}. Any solution $\overline m^*$ of \eqref{Eq:PvN} is bang-bang: there exist $E_1\,, \dots,E_N\subset \T$ such that 
\[\overline m^*=\sum_{i=0}^N \phi_i(t)\mathds 1_{E_i}.\]
\end{theorem}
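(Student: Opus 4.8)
The plan is to follow, \emph{mutatis mutandis}, the proof of Theorem \ref{Th:Bgbg}; the only structural novelty is that the control is now the $N$-tuple $(m_1,\dots,m_N)$ and that the coefficient multiplying $u_{\overline{m}}$ in the state equation, namely $\sum_{i=1}^N\phi_i(t)m_i(x)$, is genuinely time-dependent. I would first observe that the two standing hypotheses \eqref{A1}-\eqref{A2} are exactly what is needed to recover the two pillars of the argument: the parabolic regularity of Proposition \ref{Pr:Regularity} and the non-degeneracy of the leading two-scale term. For the regularity, differentiating the state equation in time shows that $v:=\partial_t u_{\overline{m}}$ solves a parabolic equation whose source carries the extra contribution $\sum_{i=1}^N\phi_i'(t)m_i(x)u_{\overline{m}}$; by \eqref{A1} this term lies in $L^\infty(\TT)$, so the bootstrap yielding $\sup_t\Vert u_{\overline{m}}(t,\cdot)\Vert_{W^{2,p}}$, $\sup_t\Vert\partial_t u_{\overline{m}}(t,\cdot)\Vert_{L^p}$ (and the analogous bounds on the adjoint $p_{\overline{m}}$) goes through unchanged. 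Lemma \ref{Le:ubdd} and Lemma \ref{Le:RegAdjoint} are likewise unaffected, so $\inf_\TT u_{\overline{m}}>0$ and $p_{\overline{m}}>0$ away from the final time.

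The decisive simplification is to perturb a single component. Arguing by contradiction, suppose a maximiser $\overline{m}^*=(m_1^*,\dots,m_N^*)$ is not bang-bang, so that for some $i_0$ the set $\omega_{i_0}:=\{0<m_{i_0}^*<1\}$ has positive measure. I would perturb only the $i_0$-th slot, taking $h_i=0$ for $i\neq i_0$ and $h_{i_0}=h$ supported in $\omega_{i_0}$. Since the constraints defining $\mathcal M_N(\T)$ decouple across components and $0<m_{i_0}^*<1$ on $\omega_{i_0}$, both $\pm h$ are admissible and the first-order condition forces the corresponding Gateaux derivative of $\mathcal J$ to vanish. The associated state derivative $\dot u$ then solves
\[
\partial_t\dot u-\partial^2_{xx}\dot u-V_{\overline{m}}\dot u=\phi_{i_0}(t)\,h(x)\,u_{\overline{m}},
\qquad
V_{\overline{m}}:=\sum_{i=1}^N\phi_i(t)m_i^*+\partial_u f(t,x,u_{\overline{m}}).
\]
The essential point is that $\phi_{i_0}$ disappears in the second-order estimate: substituting $\phi_{i_0}h=u_{\overline{m}}^{-1}\bigl(\partial_t\dot u-\partial^2_{xx}\dot u-V_{\overline{m}}\dot u\bigr)$ into the term $2\iint_\TT\phi_{i_0}h\,\dot u\,p_{\overline{m}}$ reproduces \emph{verbatim} the integration by parts leading to \eqref{Eq:MB}, with $\Psi_{\overline{m}}=p_{\overline{m}}/u_{\overline{m}}$. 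Consequently the lower bound of Proposition \ref{Pr:Cruise} holds unchanged for the single-component second derivative.

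It then remains to feed a highly oscillating $h=h_K$, constructed exactly as in Section \ref{Se:Construction} (its zero-average, high-frequency Fourier profile makes it admissible and supported in $\omega_{i_0}$), into the two-scale analysis. Here the source function $u_m$ of the single-mode problems \eqref{Eq:Etak}-\eqref{Eq:Zetak} is replaced by $\phi_{i_0}(t)u_{\overline{m}}(t,x)$; since $\phi_{i_0}$ depends only on $t$ it acts as a slow modulation, and the explicit expansions for $R_1,R_2,S_1,S_2$ carry over with $u_m$ replaced by $\phi_{i_0}u_{\overline{m}}$. Exactly two checks then invoke \eqref{A1}-\eqref{A2}: the leading-order estimate \eqref{Est:LK} now relies on $\inf_\TT|\phi_{i_0}u_{\overline{m}}|\geq\inf_{[0,T]}|\phi_{i_0}|\cdot\inf_\TT u_{\overline{m}}>0$, which is where \eqref{A2} enters; and in the remainder estimate (the analogue of Proposition \ref{Pr:Rosecroix}) the residual source involves $\partial_t(\phi_{i_0}u_{\overline{m}})-\partial^2_{xx}(\phi_{i_0}u_{\overline{m}})-V_{\overline{m}}\phi_{i_0}u_{\overline{m}}=\phi_{i_0}'u_{\overline{m}}+\phi_{i_0}\bigl(f(t,x,u_{\overline{m}})-u_{\overline{m}}\partial_u f\bigr)$, which stays in $L^\infty(\TT)$ by \eqref{A1}, so that Lemma \ref{Le:EstPot} applies as before. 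With these estimates Lemma \ref{Le:Rosecroixsuffices} yields a strictly positive second-order derivative for $K$ large, contradicting optimality; hence $m_{i_0}^*$, and since $i_0$ was arbitrary every $m_i^*$, is bang-bang.

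The main obstacle is thus not conceptual but resides in the two places where the time modulation intervenes: confirming that the new source term $\sum_{i=1}^N\phi_i'm_i u_{\overline{m}}$ does not spoil the regularity underlying Proposition \ref{Pr:Regularity} (this is precisely the role of \eqref{A1}), and that the modulated leading coefficient $\phi_{i_0}u_{\overline{m}}$ remains uniformly bounded away from zero on $\TT$ (the role of \eqref{A2}). Once these two points are secured, every remaining estimate is a direct transcription of the single-control argument.
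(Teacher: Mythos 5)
Your proposal is correct and follows essentially the same route as the paper's own sketch: perturb a single non-bang-bang component $m_{i_0}^*$, use the substitution $\phi_{i_0}h=u_{\overline m}^{-1}\mathcal L_{V_{\overline m}}\dot u$ to recover the lower bound of Proposition \ref{Pr:Cruise}, and rerun the two-scale expansion with $u_m$ replaced by $\phi_{i_0}(t)u_{\overline m}$, invoking \eqref{A2} for the non-degeneracy of the leading term and \eqref{A1} for the regularity and remainder estimates. Your accounting of exactly where the extra terms $\sum_i\phi_i'm_iu_{\overline m}$ and $\phi_{i_0}'u_{\overline m}$ appear is, if anything, slightly more explicit than the paper's.
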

\paragraph{Sketch of proof of theorem \ref{Th:BgbgN}}
First of all, we once again have 
\[\inf_\TT u_\om>0\text{ and for any $p\in[1;+\infty)$ } \sup_{t\in[0,T]}\Vert u_\om(t,\cdot)\Vert_{W^{2,p}(\T)}<\infty.\]

We can compute, for an admissible $\overline m\in \mathcal M_N(\T)$ and an admissible perturbation $\overline h=(h_1,\dots,h_N)$ at $\overline m$, the first and  second order derivatives of $\overline m \mapsto u_{\overline m}$ in the direction $\overline h$ solve, respectively,
\begin{equation}\label{Eq:DotuTD}
\begin{cases}
{\partial_t} \dot u_{\overline m}-{\partial^2_{xx}} \dot u_{\overline m}-V_{\overline m}\dot u_{\overline m}= u_{\overline m}\sum_{i=1}^N \phi_i(t)h_i&\text{ in }\TT,
\\ \dot u_{\overline m}(0,\cdot)\equiv 0 &\text{ in }\T.
\end{cases}
\end{equation}
and 
\begin{equation}\label{Eq:DdotuTD}
 \begin{cases}
 {\partial_t} \ddot u_{\overline m}-{\partial^2_{xx}} \ddot u_{\overline m}-V_{\overline m}\ddot u_{\overline m}=2 \dot u_{\overline m}\sum_{i=1}^N \phi_i(t)h_i+\left.{\partial^2_{uu}}f\right|_{u=u_{\overline m}} \left(\dot u_{\overline m}\right)^2&\text{ in }\TT,
\\ \ddot u_{\overline m}(0\cdot)\equiv 0&\text{ in }\T
 \end{cases}
 \end{equation}
with 
\[V_{\overline{m}}:=\left({\overline m}+\left.{\partial_u}f\right|_{u=u_{\overline m}}\right).\]
We introduce the adjoint state $p_{\overline{m}}$, solution of
\begin{equation}\label{Eq:AdjointTD}\begin{cases}
{\partial_ t} p_\om+{\partial^2_{xx}} p_\om+V_\om p_\om=-\left.{\partial_u}j_1\right|_{u=u_\om}\text{ in }\TT\,, 
\\ p_\om(T,\cdot)=\left.{\partial_u}j_2\right|_{u=u_\om} &\text{ in }\T.
\end{cases}\end{equation}
From the same arguments as in lemma \ref{Le:RegAdjoint} we have 
\[\forall \e>0\,, \inf_{\TeT}p_\om>0\text{ and for any $p\in[1;+\infty)$ } \sup_{t\in[0,T]}\Vert p_\om(t,\cdot)\Vert_{W^{2,p}(\T)}<\infty.\] For any admissible perturbation $\overline h$ we then have 

 \begin{multline}\ddot{\mathcal J}(\om)[\overline h,\overline h]=2\iint_\TT \dot u_\om p_\om\left\{\sum_{i=1}^N \phi_i(t)h_i(t)\right\}+\int_\T \dot u_\om^2(T,\cdot) \left.{\partial^2_{uu}}j_2\right|_{u=u_\om(T,\cdot)}
  \\+\iint_\TT \dot u_\om^2\left( \left.{\partial^2_{uu}}j_1\right|_{u=u_\om} +\left.{\partial^2_{uu}}f\right|_{u=u_\om}p_\om\right).
 \end{multline} 
 We then use the fact that 
 \[\sum_{i=1}^N \phi_i(t)h_i
=\frac{{\partial_t} \dot u_{\overline m}-{\partial^2_{xx}} \dot u_{\overline m}-V_{\overline m}\dot u_{\overline m}}{ u_{\overline m} }.\]
From this point on, we can follow all the steps of the proof of proposition \ref{Pr:Cruise} to obtain the existence of three positive constants $\alpha\,, \beta\,, \gamma$ and of a positive $\e>0$ such that, for any admissible perturbation $\overline h$ there holds
\[\ddot{\mathcal J}(\om)[\overline h,\overline h]\geq  \alpha\iint_\TeT |{\partial_x} \dot u_\om|^2-\beta\iint_\TT \dot u_\om^2-\gamma\int_\T \dot u_\om^2(T,\cdot).\]

We  then argue by contradiction, assuming that there exists a maximiser $\om$ and an index $j\in \{1,\dots,N\}$ such that $m_j^*$ is not bang-bang, so that $\omega:=\{0<m_j^*<1\}$ has positive measure. We fix this index $j$ and henceforth only consider perturbations $\overline{h}$ of the form $(0,\dots,h_j,\dots,0)$ with $h_j$ an admissible perturbation at $m_j$ supported in $\omega_j$, and admitting the Fourier decomposition 
\[h_j=\sum_{k=K}^\infty a_k\cos(kx)+b_k\sin(kx).\] For such a perturbation, the derivative $\dot u_\om$ solves 
\begin{equation}\label{Eq:DotuTDD}
\begin{cases}
{\partial_t} \dot u_{\overline m}-{\partial^2_{xx}} \dot u_{\overline m}-V_{\overline m}\dot u_{\overline m}= u_{\overline m}\phi_j(t)h_j\text{ in }\TT,
\\ \dot u_{\overline m}(0,\cdot)\equiv 0 &\text{ in }\T,
\end{cases}
\end{equation}
and given that $\inf_{\TT}|u_\om\phi_j|>0$ we can conclude in exactly the same way.

\subsubsection{Other types of interactions: generalisation and obstruction}\label{Se:Interaction}
One may argue that other types of interactions can be relevant. To motivate this point, let us consider another type of model from spatial ecology, were one rather aims at optimising a certain criterion for a state equation of the form 
\begin{equation}\begin{cases}
{\partial_t} y_m-{\partial^2_{xx}} y_m=f(t,x,y_m)+m\p(y_m)&\text{ in }\TT\,, 
\\ y_m(0,\cdot)=y^0&\text{ in }\T,\end{cases}\end{equation} where $y^0\in \mathscr C^2(\T)$ and $\inf_\T y^0>0$ is a fixed initial condition, $f$ and $\p$ are non-linearities that must satisfy that for any $m\in \mathcal M(\T)$, $y_m$ satisfies
\begin{equation}\inf_{\TT} y_m>0\end{equation} and 
\begin{equation}\forall T>0\,, \sup_{t\in [0,T]}\Vert y_m(t,\cdot)\Vert_{L^\infty}<\infty.\end{equation}
We aim at optimising 
\begin{equation}\mathcal J:\mathcal M(\T)\ni m\mapsto \iint_\TT j_1(t,x,y_m)+\int_\T j_2(x,y_m(T,\cdot))\end{equation} and assume that $\mathcal J$ satisfies \eqref{Eq:HJ}.

We claim that, up to minor adaptation of the proof of theorem \ref{Th:Bgbg}, the following result holds:
\begin{theorem}\label{Th:BgbgP}
Assume that $\p$ is $\mathscr C^2$ on $\R_+^*$. If, for any $K\in \R_+^*$, for any $\e>0$,
\begin{equation}\inf_{y\in(0,K)}\frac{\p'(y)}{\p(y)}=a_1(K)>0\,, \inf_{y\in (\e;K)}|\p(y)|=a_2(\e,K)>0\end{equation} then any solution $m_\p^*$ of the optimisation problem
\[ \sup_{m\in \mathcal M(\T)}\mathcal J(m)\] is bang-bang.
\end{theorem}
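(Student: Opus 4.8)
The plan is to transcribe the proof of Theorem~\ref{Th:Bgbg} essentially verbatim, the single structural change being that the bilinear source $hu_m$ in the equation for the state derivative is replaced by $h\p(y_m)$, and to track precisely where the two hypotheses on $\p$ are consumed. First I would record the preliminary regularity. By assumption $\inf_\TT y_m>0$ and $\sup_{t\in[0,T]}\Vert y_m(t,\cdot)\Vert_{W^{2,p}(\T)}<\infty$ for every $p$, so $y_m$ takes values in a compact interval $[\delta,K]\subset\R_+^*$. Writing $\Phi_m:=\p(y_m)$ and using $\p\in\mathscr C^2$ together with $\inf_{y\in(\delta,K)}|\p(y)|\geq a_2(\delta,K)>0$, one gets
\[\inf_\TT\Phi_m\geq a_2(\delta,K)>0\,,\qquad \sup_{t\in[0,T]}\Vert\Phi_m(t,\cdot)\Vert_{W^{2,p}(\T)}<\infty\]
(the second bound because $\partial^2_{xx}\Phi_m=\p''(y_m)(\partial_x y_m)^2+\p'(y_m)\partial^2_{xx}y_m\in L^p$). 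Thus $\Phi_m$ enjoys exactly the positivity and regularity that $u_m$ had in the bilinear case.

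Second, I would compute the derivatives. Differentiating the state equation, $\dot y_m$ solves $\partial_t\dot y_m-\partial^2_{xx}\dot y_m-V_m\dot y_m=h\p(y_m)$ with $V_m:=\partial_u f|_{u=y_m}+m\p'(y_m)\in L^\infty(\TT)$, and $\ddot y_m$ solves the analogous equation with right-hand side $2h\p'(y_m)\dot y_m+(\partial^2_{uu}f|_{u=y_m}+m\p''(y_m))(\dot y_m)^2$. Introducing the adjoint $p_m$ solving the backward equation of \eqref{Eq:Adjoint} with this $V_m$, the maximum-principle argument of Lemma~\ref{Le:RegAdjoint} gives $\inf_{[0,T-\e]\times\T}p_m>0$, and integration by parts yields $\dot{\mathcal J}(m)[h]=\iint_\TT h\p(y_m)p_m$. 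Using the adjoint on $\ddot y_m$, the decisive cross term becomes $2\iint_\TT h\p'(y_m)\dot y_m p_m$; substituting $h\p(y_m)=\partial_t\dot y_m-\partial^2_{xx}\dot y_m-V_m\dot y_m$ this equals $2\iint_\TT\Psi_m\dot y_m(\partial_t\dot y_m-\partial^2_{xx}\dot y_m-V_m\dot y_m)$ with the forced choice
\[\Psi_m:=\frac{\p'(y_m)}{\p(y_m)}\,p_m\,,\]
which reduces to $p_m/u_m$ precisely when $\p=\mathrm{Id}$. Here the hypothesis $\inf_{y\in(0,K)}\p'/\p\geq a_1(K)>0$ enters: combined with $p_m>0$ it gives $\inf_{(0,T-\e]\times\T}\Psi_m>0$, which is exactly what is needed to bound $\iint_\TT\Psi_m|\partial_x\dot y_m|^2$ below by $\alpha\iint_\TeT|\partial_x\dot y_m|^2$ and thus to reproduce Proposition~\ref{Pr:Cruise}.

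Third, with the lower estimate $\ddot{\mathcal J}(m)[h,h]\geq\alpha\iint_\TeT|\partial_x\dot y_m|^2-\beta\iint_\TT\dot y_m^2-\gamma\int_\T\dot y_m^2(T,\cdot)$ in hand, I would run the two-scale analysis of Section~\ref{Se:Construction} and the subsequent single-mode computations verbatim, with $u_m$ replaced by $\Phi_m=\p(y_m)$: the only properties of the source used in the construction of $\eta_k,\zeta_k$, in the leading-order bound \eqref{Est:LK}, and in Proposition~\ref{Pr:Rosecroix} are $\inf_\TT\Phi_m>0$ and $\sup_t\Vert\Phi_m\Vert_{W^{2,p}}<\infty$, both established above; in particular $\inf_\TT\Phi_m\geq a_2>0$ is what keeps \eqref{Est:LK} alive. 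Then, arguing by contradiction with a non bang-bang maximiser $m^*$ and an admissible, purely high-frequency perturbation $h$ supported in $\omega=\{0<m^*<1\}$ (so that $\dot{\mathcal J}(m^*)[h]=0$), the argument of Lemma~\ref{Le:Rosecroixsuffices} produces $\ddot{\mathcal J}(m^*)[h,h]>0$, contradicting optimality.

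Fourth, the one genuinely new point—and the main obstacle—is the regularity of $\Psi_m$ in the estimate of the zeroth-order coefficient $\mathscr Z_m$. In the bilinear case one integrates by parts twice in space, producing a $\partial^2_{xx}\Psi_m$ term whose analogue here contains $(\p'/\p)''(y_m)$, i.e. a factor involving $\p'''$, unavailable under $\p\in\mathscr C^2$. The remedy is to integrate by parts only once in the spatial variable, retaining $\iint_\TT\partial_x\Psi_m\,\dot y_m\,\partial_x\dot y_m$: since $\partial_x\Psi_m=(\p'/\p)'(y_m)\partial_x y_m\,p_m+(\p'/\p)(y_m)\partial_x p_m\in L^\infty(\TT)$ requires only $\p''$, this term is controlled by $\int_0^T\Vert\dot y_m\Vert_{L^2(\T)}\Vert\partial_x\dot y_m\Vert_{L^2(\T)}$ and absorbed by the arithmetic--geometric inequality exactly as the $\mathscr Z_m$ term is in the proof of Proposition~\ref{Pr:Cruise}. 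With this single adaptation $\p\in\mathscr C^2$ suffices and the proof closes.
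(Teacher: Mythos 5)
Your proposal follows the paper's own route essentially verbatim: the paper's sketch likewise reduces everything to rerunning the proof of Theorem \ref{Th:Bgbg} with the weight $\Psi_m=\p'(y_m)q_m/\p(y_m)$ in place of $p_m/u_m$, using the hypothesis on $\p'/\p$ exactly where you do (positivity of the coefficient of $|\partial_x\dot y_m|^2$) and the lower bound on $|\p|$ exactly where $\inf_\TT u_m>0$ was used in the bilinear case. The one step you do differently --- integrating by parts only once in $x$ so as to avoid $\partial^2_{xx}\Psi_m$, which would involve $(\p'/\p)''$ and hence $\p'''$ --- is a genuine refinement of the paper's sketch, which merely asserts that one can follow all the steps of the proof of Proposition \ref{Pr:Cruise} and therefore implicitly needs more regularity of $\p$ at that point; your single integration by parts, with the cross term $\iint_\TT \dot y_m\,\partial_x\Psi_m\,\partial_x\dot y_m$ absorbed by the arithmetic--geometric inequality, is what makes the stated $\mathscr C^2$ hypothesis actually suffice.
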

Let us explain why this type of setting is relevant in application: consider the case of the logistic-diffusive equation 
\begin{equation}\begin{cases}
{\partial_t} y_m-{\partial^2_{xx}} y_m=y_m(1-my_m)&\text{ in }\TT\,, 
\\ y_m(0,\cdot)=y^0&\text{ in }\T,\end{cases}\end{equation} as well as the functional 
\[ \mathcal J(m):=\int_\T y_m(T,\cdot).\] Maximising $\mathcal J$ with respect to $m\in \mathcal M(\T)$ amounts to optimising the total population size with respect to $m$, the inverse of the carrying capacity (at this stage, one may argue that it would make more sense to consider the case of $m$ satisfying $\e\leq m\leq 1$ in the definition of $\mathcal M(\T)$; given remark \ref{Re:Constraints}, we claim that this would not change anything to the conclusion of theorem \ref{Th:BgbgP}). Such a problem is inspired by the considerations of \cite{DeAngelis2015}.
\paragraph{Sketch of proof of theorem \ref{Th:BgbgP}}
We start by noticing that
\[\inf_\TT y_m>0\text{ and for any $p\in[1;+\infty)$ } \sup_{t\in[0,T]}\Vert y_m(t,\cdot)\Vert_{W^{2,p}(\T)}<\infty.\]

Let  $m\in \mathcal M(\T)$, and consider an admissible perturbation $h$ at $ m$; the first and  second order derivatives of $m \mapsto u_{m}$ in the direction $ h$ solve,
\begin{equation}\label{Eq:DotuTD}
\begin{cases}
{\partial_t} \dot y_m-{\partial^2_{xx}} \dot y_{m}-V_{m}\dot y_{m}= \p(y_{m})h&\text{ in }\TT,
\\ \dot y_{m}(0,\cdot)\equiv 0 &\text{ in }\T.
\end{cases}
\end{equation}
and 
\begin{equation}\label{Eq:DdotuTD}
 \begin{cases}
 {\partial_t} \ddot y_m-{\partial^2_{xx}} \ddot y_{m}-V_{m}\ddot y_{m}=2 \dot y_{m}h\p'(y_m)+\left(m\p''(y_m)+\left.{\partial^2_{uu}}f\right|_{u=y_{m}} \right)\dot y_{m}^2&\text{ in }\TT,
\\ \ddot y_{m}(0\cdot)\equiv 0&\text{ in }\T
 \end{cases}
 \end{equation}
with 
\[V_{m}:=\left({m}+\left.{\partial_{u}}f\right|_{u=y_{m}}+\p'(y_m)\right).\]
We introduce the adjoint state $q_{m}$, solution of
\begin{equation}\label{Eq:AdjointTD}\begin{cases}
{\partial_t}q_m+{\partial^2_{xx}} q_m+V_m q_m=-\left.{\partial_u}j_1\right|_{u=y_m}\text{ in }\TT\,, 
\\ q_m(T,\cdot)=\left.{\partial_u}j_2\right|_{u=y_m} &\text{ in }\T.
\end{cases}\end{equation}
From the same arguments as in lemma \ref{Le:RegAdjoint} we have 
\[\forall \e>0\,, \inf_{\TeT}q_m>0\text{ and for any $p\in[1;+\infty)$ } \sup_{t\in[0,T]}\Vert q_m(t,\cdot)\Vert_{W^{2,p}(\T)}<\infty.\] For any admissible perturbation $\overline h$ we then have 

 \begin{multline}\ddot{\mathcal J}(m)[\overline h,\overline h]=2\iint_\TT \dot y_m \p'(y_m)q_mh+\int_\T \dot y_m^2(T,\cdot) \left.{\partial^2_{uu}}j_2\right|_{u=y_m(T,\cdot)}
  \\+\iint_\TT \dot y_m^2\left( \left.{\partial^2_{uu}}j_1\right|_{u=y_m} +\left.{\partial^2_{uu}}f\right|_{u=y_m}q_m+m\p''(y_m)\right).
 \end{multline} 
 We then use the fact that 
 \[h
=\frac{{\partial_t} \dot y_m-{\partial^2_{xx}} \dot y_{m}-V_{m}\dot y_{m}}{ \p(y_{m}) }.\]
Following all the steps of the proof of proposition \ref{Pr:Cruise}, we obtain the existence of two constants $\beta\,, \gamma$ such that, for any admissible perturbation $h$ at $m$, there holds
\[\ddot{\mathcal J}(m)[ h, h]\geq  \iint_\TT\frac{q_m\p'(y_m)}{\p(y_m)} |{\partial_x} \dot y_m|^2-\beta\iint_\TT \dot y_m^2-\gamma\int_\T \dot y_m^2(T,\cdot).\]
We then use the assumption to obtain the existence of an $\e>0$ such that we have, for any admissible perturbation $h$ at $m$, the estimate
\begin{multline*}\ddot{\mathcal J}(m)[ h, h]\\\geq \left(\inf_{\TeT}q_m\right)\frac{a_1(\sup_\TT y_m)}{a_2(\inf_\TT y_m,\sup_\TT y_m)} \iint_\TeT\ |{\partial_x} \dot y_m|^2-\beta\iint_\TT \dot y_m^2-\gamma\int_\T \dot y_m^2(T,\cdot)
\\\geq \alpha \iint_\TeT\ |{\partial_x} \dot y_m|^2-\beta\iint_\TT \dot y_m^2-\gamma\int_\T \dot y_m^2(T,\cdot) \end{multline*} for a positive $\alpha>0$.
We  then follow exactly the same steps.

\subsubsection{Some interactions not covered by our method}
However, despite their interest, our generalisations, theorems \ref{Th:BgbgP} and \ref{Th:BgbgN}, do not cover several cases. A typical example of such an interaction between the state and the control is, typically, of the form $m\p(u_m)$ with $\p$ an increasing, negative function. Indeed, the following is easily checked \emph{via} the same computations: let $\p$ be a smooth function such that
\[\forall K\,, \e>0\,, \sup_{(0;K)}\frac{\p'}{\p}=-a_1<0\,, \inf_{[\e;K]} |\p|=a_2(\e,K)>0,\] and consider the solution $z_m$ of
\begin{equation}\begin{cases}
{\partial_t}z_m-{\partial^2_{xx}} z_m=f(t,x,z_m)+m\p(z_m)&\text{ in }\TT\,, 
\\ z_m(0,\cdot)=z^0&\text{ in }\T,\end{cases}\end{equation} 
where $\inf_\T z^0>0$ and $\p$ and $f$ are further chosen to satisfy 
\[ \forall m\in \mathcal M(\T)\,, \inf_\TT z_m>0\,, \sup_{t\in[0,T]}\Vert z_m(t,\cdot)\Vert_{W^{2,p}(\T)}<\infty.\] Then, considering a functional $\mathcal J$ that satisfies \eqref{Eq:HJ}, there exists a positive constant $\alpha>0$ and two constants $\beta\,, \gamma>0$ such that
\[\ddot{\mathcal J}(m)[h,h]\leq- \alpha\iint_\TeT |{\partial_x}\dot z_m|^2+\beta\iint_\TT \dot z_m^2+\gamma\int_\T \dot z_m^2(T,\cdot),
\]where $\dot z_m$ is of course the derivative of $m\mapsto z_m$ at $m$ in the direction $h$. In this case, our method fails to provide a bang-bang property for maximisers, but yields a bang-bang property for minimisers.

\subsection{Generalisations and obstructions for Theorem \ref{Th:Bgbg}}
In this section, we present several possible obstructions and generalisations of theorem \ref{Th:Bgbg} and of our methods to other contexts (\emph{e.g} to the multi-dimensional case or to other geometries), pinpointing what the main difficulties seem to be.
\subsubsection{Higher dimensional tori}\label{ff}
We believe that our method extends, in a straightforward manner, to the case of $d$-dimensional tori, for any $d\in \N\backslash\{0\}$; once again, two steps are crucial in deriving theorem \ref{Th:Bgbg}. The first one, proposition \ref{Pr:Cruise}, is an estimate on the second order Gateaux derivative of the functional which does not depend on the dimension, see in particular remark \ref{Argued}. The second one is to establish a two-scale asymptotic expansions for solutions of a linear heat equation with a highly oscillating source term. We claim that this step can be extended in a straightforward way to the $d$ dimensional torus, provided the functions $\cos(k\cdot)$ and $\sin(k\cdot)$ are replaced with products of the form $\prod_{i=1}^d \cos(k_ix)\sin(k_i'x)$ or $\prod_{i=1}^d\cos(k_ix)$ where $(k_1,\dots\,,k_d\,,k_1',\dots\,,k_d')\in \N^{2d}$. 

\subsubsection{Possible obstructions in other domains}
It would be extremely interesting and relevant, in many applications, to consider not only the case of bounded domains $\O\subset \R^d$ with, for instance, Neumann or Robin boundary conditions (Dirichlet boundary conditions may not be suitable for our needs, as we need, in a crucial manner, a uniform lower bound on solutions of the equation). In this context, we claim that the lower estimate given by proposition \ref{Pr:Cruise} still holds, as is clear in the proof and in remark \ref{Argued}. The main difficulty lies elsewhere, namely, in the possibility to attain two-scale asymptotic expansions in order to derive the bang-bang property. A possibility to do so would be to replace the $\cos(k\cdot)\,, \sin(k\cdot)$ with $\psi_k$, where the $\{\psi_k\}_{k\in \N}$ or the Neumann (if working with Neumann boundary conditions) or Robin (if working with Robin boundary conditions) eigenfunctions of the laplacian in $\O$, associated with the (increasing) sequence of eigenvalues $\{\lambda_k\}_{k\in \N}$. Let us assume that we are working with Neumann boundary conditions. Then the task at hand would be, if we mimicked our approach, to find an asymptotic expansion for a solution $\dot u_m$ of 
\[
\begin{cases}
{\partial_t}\dot u_m-\Delta \dot u_m-V_m \dot u_m=u_m(t,x)\sum_{k=K}^\infty a_k \psi_k(x)&\text{ in }\OT\,, 
\\{\partial_\nu}\dot u_m=0&\text{ on }\partial \O\,, 
\\ \dot u_m(0,\cdot)=0&\text{ in }\O.\end{cases}\] It is unclear, in this situation, which asymptotic expansion would yield a result analogous to that of proposition \ref{Pr:Rosecroix}.

\subsubsection{Possible obstructions for other diffusion operators}
A very relevant query, if we keep application to mathematical biology in mind, is the analysis of heterogeneous diffusion operators. In other words, following, for instance, \cite{BelgacemCosner}, one may rather be interested in state equations assuming the form 
\[ {\partial_t}u_m-\nabla \cdot\left(A\nabla u_m\right)=mu_m+f(t,x,u)\] where $A=A(t,x)$ accounts for some heterogeneity.  It is likely that our methods extend to this case, provided $A$ is smooth enough to guarantee uniform (in time) $W^{2,p}(\T)$ (in space) estimates on the solution $u_m$. Although this can be of interest, since our main goal is the analysis of shape optimisation problems in optimal control problems, this is not the most relevant analytical setup for us. Indeed, in the context of spatial ecology, the diffusion matrix $A$ and the resources distribution $m$ are often linked, which will lead to very intricate situations from the regularity point of view. For an example of such an optimisation problem and of the wealth of qualitative and technical issues it can lead to we refer to the elliptic optimisation problem studied in \cite{MNPARMA}.

\subsubsection{Some possible open questions}
Of course, the bang-bang property is one of the many qualitative aspects of bilinear optimal control problems. Even when $m$ does not depend on time, and we know that maximisers are bang-bang, what do {these} optimisers look like from a geometric point of view? In other words, considering a maximiser $m^*=\mathds 1_{E^*}$, what are the geometric and topological features of $E^*$? Is it connected, disconnected, and how may we quantify such information? Let us underline here that the functionals under consideration in this paper are non-energetic, which prohibits the use of rearrangement techniques. Such techniques, developed in the context of mathematical biology in \cite{BHR} for instance, although very powerful for energetic or spectral optimisation problems in the elliptic case \cite{LamboleyLaurainNadinPrivat} or for the study of concentration phenomena in parabolic models \cite{Alvino1986,alvino1991,Alvino1990,Bandle,MNLA,Mossino,RakotosonMossino,Vazquez} can not yield satisfying results for non-energetic problems. As an example, we refer to the elliptic problem of optimising the total population size described in section \ref{Se:Bib} and, more specifically, to the results of \cite{Heo2021,MNPCPDE,MRBSIAP}: in the elliptic context, depending on the dispersal rate of the population, optimal resources distributions are either concentrated (and when that dispersal rate is high enough we can apply symmetrisation properties) or display hectic oscillations (this corresponds to the low dispersal rate limit, and is clearly a case where it is hopeless to apply rearrangements). The study of these properties in parabolic models seems challenging, and we plan on studying it in further works.

\subsection{The difficulty with general time dependent controls}\label{Se:TimeDependent}
Finally, let us underline the core difficulties in reaching the bang-bang property for general time-dependent controls. In other words, assume we are working with controls $m$ satisfying
\begin{multline*}
m\in \mathcal M(\TT)\\=\left\{m\in L^\infty(\TT):\, 0\leq m\leq 1\text{ a.e.  and for a.e. $t\in [0,T]$ }\int_\T m(t,\cdot)=m_0\right\},\end{multline*}
and we are studying the maximisation of $\mathcal J$ over $\mathcal M(\TT)$. Two problems rapidly arise. The first one is that, as noted several times, the regularity of $u_m$ is crucial in deriving proposition \ref{Pr:Cruise}; this necessarily requires some \emph{ a priori } assumptions on the regularity of the control $m$ in time. The second difficulty is in defining, for a given maximiser $m^*$ that would, arguing by contradiction, not be bang-bang, a highly oscillating perturbation $h$. One may be tempted to choose a perturbation $h$ supported in the right set (\emph{i.e.} in the set $\{0<m^*<1\}$) by reasoning as follows: define, for a.e. $t\in [0,T]$, $\omega(t):=\{t\}\times \{0<m^*<1\}$ and consider a function $h_t$ supported in $\omega(t)$ that only has high (enough) Fourier modes. One would then define the perturbation as $h(t,x)=h_t(x)$. The problem here is that there is no guarantee that such a function $h$ is measurable in $(t,x)$.

On the other hand, enforcing strong time regularity constraints on the control $m$ (such as, for instance, $m\in \mathscr C^1([0,T];\mathcal M(\T))$) may allow such constructions to work. This is however, beyond the scope of our article, and we plan on investigating the influence of time-regularity constraints on the bang-bang property {future works}.

\bibliographystyle{abbrv}

\bibliography{BiblioMNPParab}

\appendix
\section{Study of the parabolic model}\label{Ap:Parab}
\subsection{Proof of lemma \ref{Le:ubdd}}\label{Ap:ubdd}
\begin{proof}[Proof of lemma \ref{Le:ubdd}]
Given that $f$ satisfies \eqref{H3}, $u_m$ satisfies the inequality
\begin{equation}\begin{cases}
{\partial_t}u_m-{\partial^2_{xx}} u_m\geq mu_m+f(t,x,0)-A|u_m|\geq mu_m-A|u_m|&\text{ in }\TT
\\u_m(0,\cdot)=u^0&\text{ in }\T.\end{cases}\end{equation}
Multiplying this equation by the negative part $u_m^-$ of $u_m^-$ and integrating by parts gives
\[\frac{d}{d t}\int_\T (u_m^-)^2+\int_\T (u_m^-)^2\leq A\int_\O (u_m^-)^2\] and since $u_m^-(0,\cdot)\equiv 0$ we obtain 
\[u_m\geq 0\text{ in }\TT.\] To derive that 
\[\inf_\TT u_m>0\] it suffices to apply the strong maximum principle.

For the upper bound, let $\kappa$ be given by \eqref{H2}. Up to replacing $\kappa$ with $\max\{\Vert u^0\Vert_{L^\infty},\kappa\}$ we may assume that 
\[\kappa \geq \Vert u^0\Vert_{L^\infty}.\] Let $z_m:=u_m-\kappa$. $z_m$ solves the partial differential equation
\begin{align*}{\partial_t}z_m-{\partial^2_{xx}} z_m=mu_m+f(t,x,u_m)&=mz_m+f(t,x,u_m)+\kappa m
\\&=mz_m+f(t,x,u_m)-f(t,x,\kappa)+\kappa m+f(t,x,\kappa)
\\&\leq mz_m+f(t,x,u_m)-f(t,x,\kappa)
\\&\leq (m+A)|z_m|.
\end{align*}
As $z_m(0,\cdot)\leq 0$, the conclusion follows: $z_m\leq 0$ in $\TT$ so that $u_m\leq \kappa$.
\end{proof}

\subsection{Regularity results: proof of proposition \ref{Pr:Regularity}}\label{Ap:Regularity}
To prove proposition \ref{Pr:Regularity} we need an auxilliary result:

\begin{lemma}\label{Le:Lp}For any $p\in [1;+\infty)$ there exists a constant $C_p$ such that the following holds: there exists $q>1$ such that, for any $\theta^0\in \mathscr C^\infty(\T)$, for any ${F}\,, W\in W^{1,q}(0,T;L^q(\T))\cap L^\infty(\TT)$, if  $\theta$ be the unique $L^p(0,T;W^{1,p}(\T))$ solution of
\begin{equation}\label{Eq:theta}\begin{cases}
\partial_t \theta-\partial^2_{xx} \theta-W \theta={F}&\text{ in }\TT\,, 
\\ \theta(0,\cdot)=\theta^0&\text{ in }\T.\end{cases}\end{equation} Then 
\begin{multline}\label{Eq:EstLp}
\sup_{t\in [0,T]}\left\Vert \partial_t \theta(t,\cdot)\right\Vert_{L^p(\T)}+\sup_{t\in[0,T]}\left\Vert \theta(t,\cdot)\right\Vert_{W^{2,p}(\T)}\\\leq C_p\left(\left\Vert{F}\right\Vert_{W^{1,q}(0,T;L^q(\T))}+\Vert W\Vert_{W^{1,q}(0,T;L^q(\O)}+\left\Vert \theta^0\right\Vert_{\mathscr C^2(\T)}\right). \end{multline}It suffices to take $q=4\lfloor p\rfloor.$

\end{lemma}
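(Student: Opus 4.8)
The plan is to treat the zeroth-order term $W\theta$ together with the forcing $F$ as a single source for the pure heat operator $\partial_t-\partial^2_{xx}$, and then to run a short bootstrap resting on two ingredients: parabolic $L^q$ maximal regularity for the heat equation on $\T$, and the one-dimensional (anisotropic, parabolic) Sobolev embeddings, which are especially favourable. Since the target estimate \eqref{Eq:EstLp} controls $\sup_t\|\partial_t\theta(t,\cdot)\|_{L^p(\T)}$ and $\sup_t\|\theta(t,\cdot)\|_{W^{2,p}(\T)}$, i.e. pointwise-in-time norms, the bootstrap must ultimately upgrade integrability in time to continuity (or at least $L^\infty$) in time; this is the only delicate point, and it is where the largeness of $q$ relative to $p$ is used.

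First I would record a basic energy estimate: multiplying \eqref{Eq:theta} by $\theta$, integrating by parts over $\T$, and using $W\in L^\infty(\TT)$ together with Gr\"onwall's lemma gives a bound on $\theta$ in $L^\infty(0,T;L^2(\T))\cap L^2(0,T;W^{1,2}(\T))$ by the data. Reading \eqref{Eq:theta} as $\partial_t\theta-\partial^2_{xx}\theta=F+W\theta$ with the whole right-hand side regarded as an $L^q(\TT)$ source, parabolic maximal regularity then yields $\theta\in L^q(0,T;W^{2,q}(\T))$ with $\partial_t\theta\in L^q(\TT)$. Since $q>3/2$, the anisotropic parabolic embedding places $\theta$ in $\mathscr C([0,T];\mathscr C^0(\T))$, so in particular $\theta\in L^\infty(\TT)$, which is the bound we need to absorb the potential term in the next step.

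Next I would differentiate the equation in time. Setting $v:=\partial_t\theta$, one sees that $v$ solves
\begin{equation*}
\partial_t v-\partial^2_{xx} v-Wv=\partial_t F+(\partial_t W)\,\theta \qquad\text{in }\TT,
\end{equation*}
with initial datum $v(0,\cdot)=\partial^2_{xx}\theta^0+W(0,\cdot)\theta^0+F(0,\cdot)$, which is controlled in $\mathscr C^0(\T)$ by $\|\theta^0\|_{\mathscr C^2(\T)}$ and the data. Because $W\in L^\infty(\TT)$ and $\partial_t W\in L^q(\TT)$ while $\theta\in L^\infty(\TT)$, the source $\partial_t F+(\partial_t W)\theta$ lies in $L^q(\TT)$ with norm controlled by $\|F\|_{W^{1,q}(0,T;L^q(\T))}$ and $\|W\|_{W^{1,q}(0,T;L^q(\T))}$. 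Re-applying parabolic maximal regularity to $v$ and once more invoking the one-dimensional parabolic embedding gives $v=\partial_t\theta\in \mathscr C([0,T];\mathscr C^0(\T))\subset L^\infty(0,T;L^p(\T))$. Here the choice $q=4\lfloor p\rfloor$ is a convenient, non-sharp over-estimate: it is large enough for every embedding in the bootstrap to hold with room to spare, and it guarantees in particular $q\geq p$, so that on the finite-measure torus $L^q(\T)\hookrightarrow L^p(\T)$ in the final step below.

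Finally, reading \eqref{Eq:theta} at each fixed time as the one-dimensional elliptic identity $-\partial^2_{xx}\theta(t,\cdot)=F(t,\cdot)+W(t,\cdot)\theta(t,\cdot)-\partial_t\theta(t,\cdot)$, whose right-hand side is now bounded in $L^p(\T)$ uniformly in $t$, elliptic $W^{2,p}(\T)$ regularity brings $\sup_t\|\theta(t,\cdot)\|_{W^{2,p}(\T)}$ under control and closes \eqref{Eq:EstLp}. I expect the main obstacle to be the bookkeeping of exponents through the two-step bootstrap, and in particular the upgrade from $L^q$-in-time to $L^\infty$-in-time control, which relies on the favourable one-dimensional trace/embedding theory valid once $q>3/2$. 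A secondary technical point is justifying the time-differentiation rigorously, e.g. through difference quotients, so that both the equation for $v=\partial_t\theta$ and the expression for its initial trace are legitimate; this is precisely where the assumed $W^{1,q}$-in-time regularity of $F$ and $W$ is indispensable.
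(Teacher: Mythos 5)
Your overall architecture coincides with the paper's: differentiate the equation in time so that $v=\partial_t\theta$ solves a parabolic problem with source $\partial_t F+(\partial_t W)\theta$, control that source through the $W^{1,q}$-in-time regularity of $F$ and $W$ together with an a priori bound on $\theta$, and finally recover $\sup_t\Vert\theta(t,\cdot)\Vert_{W^{2,p}}$ by reading the equation at frozen time as a one-dimensional elliptic problem. Where you diverge is the engine used to produce the pointwise-in-time $L^p$ bounds: the paper tests the equation against $2k\theta^{2k-1}$ for $p=2k$, obtains a differential inequality $y'-c_0y\leq c_1(t)y^{1-1/(2k)}$ for $y(t)=\int_\T\theta^{2k}$, and integrates it by a Bernoulli substitution; this gives $\sup_t\Vert\theta(t,\cdot)\Vert_{L^{2k}}$ directly from $L^{2k}$ data, and the same estimate applied to $v$ (with the arithmetic--geometric inequality $\Vert\theta\partial_tW\Vert_{L^{2k}}\leq\frac12(\Vert\theta\Vert_{L^{4k}}^2+\Vert\partial_tW\Vert_{L^{4k}}^2)$, which is precisely where $q=4\lfloor p\rfloor$ comes from) closes the argument with no reference to maximal regularity. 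Your route via maximal $L^q$ regularity plus anisotropic embeddings is heavier machinery but buys stronger interior conclusions (continuity in $(t,x)$); the paper's route is more elementary and, crucially, insensitive to the regularity of the initial datum beyond its $L^{2k}$ norm.

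That last point is where your proposal has a genuine soft spot. Maximal $L^q$ regularity on $(0,T)$ with a nonzero initial datum requires that datum to lie in the trace space $B^{2-2/q}_{q,q}(\T)$, and the conclusion $v\in\mathscr C([0,T];\mathscr C^0(\T))$ additionally forces continuity of $v(0,\cdot)$. But $v(0,\cdot)=\partial^2_{xx}\theta^0+W(0,\cdot)\theta^0+F(0,\cdot)$ is in general only $L^\infty(\T)$ (the time traces $W(0,\cdot)$, $F(0,\cdot)$ carry no spatial smoothness), so your second maximal-regularity step does not apply up to $t=0$ as stated, and the claimed continuity of $\partial_t\theta$ at $t=0$ is false in general. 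The gap is fixable because the lemma only needs $\sup_{t}\Vert v(t,\cdot)\Vert_{L^p}$: split $v=v_1+v_2$ with $v_1$ solving the homogeneous problem with datum $v(0,\cdot)\in L^\infty(\T)$, controlled uniformly by the maximum principle (up to a factor $e^{\Vert W\Vert_{L^\infty}t}$), and $v_2$ solving with zero datum and the $L^q$ source, to which maximal regularity and the embedding legitimately apply. With that repair, and with the two-step bootstrap you allude to made explicit (one application of $L^2$ maximal regularity already yields $\theta\in\mathscr C([0,T];W^{1,2}(\T))\hookrightarrow L^\infty(\TT)$ in one dimension, after which $W\theta\in L^q$ for every $q$), your argument goes through.
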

\begin{proof}[Proof of Lemma \label{Le:Lp}]
Let us first prove that for any $p\in [1;+\infty)$ there exists a constant $C_p^0$ such that
\begin{equation}\label{Eq:Sayre}
\sup_{t\in [0,T]}\Vert \theta(t,\cdot)\Vert_{L^p(\T)}\leq C_p^0\left(\Vert {F}\Vert_{L^\infty(\TT)}+\Vert \theta^0\Vert_{L^q}\right),\end{equation} for $q$ large enough. We first recall the inclusion of Lebesgue spaces: if $p_1>p_0$ then $L^{p_1}(\T)\hookrightarrow L^{p_0}(\T)$. By this inclusion of Lebesgue spaces it suffices to prove \eqref{Eq:Sayre} for $p=2k$, $k=1,\dots,n,\dots$. Let $k\in \N\backslash\{0\}.$
To obtain \eqref{Eq:Sayre} for $p=2k$ we use 
$$v:=2k\theta^{2k-1}$$ as a test function in the weak formulation of \eqref{Eq:theta}. We obtain, for a.e. $t\in(0,T)$, 
\[
\int_\T 2k \left(\partial_t\theta\right)\theta^{2k-1}+2k(2k-1)\int_\T \theta^{2k-2} \left| {\partial_x} \theta\right|^2-2k\int_\T W\theta^{2k}=2k\int_\T {F}\theta^{2k-1}.\]
In particular we obtain
\[ \partial_t\int_\T \theta^{2k}-\Vert W\Vert_{L^\infty(\TT)}\int_\T \theta^{2k}\leq 2k\int_\T \left|{F}\right| \cdot |\theta|^{2k-1}.
\]
We bound the right-hand side using H\"{o}lder's inequality:
\begin{align*}
 2k\int_\T \left|{F}\right| \cdot |\theta|^{2k-1}&\leq 2k\Vert {F}(t,\cdot)\Vert_{L^{2k}(\T)}\left(\int_\T \theta^{2k}\right)^{\frac{2k-1}{2k}}
\end{align*}
Defining $c_0:=\Vert W\Vert_{L^\infty(\TT)}\,, c_1(t):=2k\Vert {F}(t,\cdot)\Vert_{L^{2k}(\T)}$ and setting 
$$y(t):=\int_\T \theta^{2k}(t,\cdot)$$ we are left with the differential inequality
$$y'(t)-c_0 y(t)\leq c_1(t)y(t)^{1-\frac1{2k}},$$ which in turn yields
$$y' y^{\frac1{2k}-1}-c_0 y^{\frac1{2k}}\leq c_1.$$
We set $\overline z=y^{\frac1{2k}}e^{-c_0t}$, which leads to
$$\overline z'(t)\leq c_1(t)e^{-c_0t},$$ and it suffices to integrate this inequality to obtain

$$y_k(t)^{\frac1{2k}}\leq e^{c_0t}y_k(0)^\frac{1}{2k}+2ke^{c_0t}\int_0^t e^{-c_0\tau} \Vert  {F}(\tau,\cdot)\Vert_{L^{2k}(\T)}d\tau.
$$
We bound brutally $t\leq T$ in the exponentials, and we obtain, for a constant $c_3$,
$$
y_k(t)^{\frac1{2k}}\leq  2kc_3\left(y_k(0)^\frac{1}{2k}+\int_0^t  \Vert  {F}(\tau,\cdot)\Vert_{L^{2k}(\T)}d\tau\right).$$
Finally, we use Jensen's inequality:

$$\int_0^t  \Vert  {F}(\tau,\cdot)\Vert_{L^{2k}(\T)}d\tau\leq t^{1-\frac2k}\left(\int_0^t \int_\T {F}^{2k}\right)^{\frac1{2k}}\leq T^{1-\frac2k}\Vert {F}\Vert_{L^{2k}(\TT)}. $$Finally:

$$\Vert \theta(t,\cdot)\Vert_{L^p(T)}\leq 2kc_3\left(\Vert \theta^0\Vert_{L^{2k}(\T)}+T^{1-\frac2k}\Vert {F}\Vert_{L^{2k}(\TT)}\right).$$

To derive \eqref{Eq:EstLp} we differentiate \eqref{Eq:theta} with respect to time. It appears that $q:=\partial_t \theta$ solves
\begin{equation}\label{Eq:q}
\begin{cases}
\partial_t q-{\partial^2_{xx}} q-Wq=\partial_t {F}+\theta \partial_t W&\text{ in }\TT\,, 
\\ q(0,\cdot)=W \theta^0+{\partial^2_{xx}} \theta^0+{F}(0,\cdot)&\text{ in }\T.\end{cases}\end{equation}
We reason once again using only $p=2k\,, k\in \N\backslash\{0\}$.
 It suffices to apply \eqref{Eq:Sayre} to obtain 
\begin{multline}\sup_{t\in (0,T)} \Vert q(t,\cdot)\Vert_{L^{2k}(\T)}\\\leq c_3\left( \Vert W \theta^0\Vert_{L^{2k}(\T)}+\Vert {\partial^2_{xx}} \theta^0\Vert_{L^{2k}(\T)}+\Vert {F}(0,\cdot)\Vert_{L^{2k}(\T)}+T^{1-\frac2k}\Vert \partial_t{F}+\theta\partial_t W\Vert_{L^{2k}(\TT)}\right)
\end{multline}
We bound the first terms as follows:
\[\Vert W \theta^0\Vert_{L^{2k}(\T)}+\Vert {\partial^2_{xx}} \theta^0\Vert_{L^{2k}(\T)}+\Vert {F}(0,\cdot)\Vert_{L^{2k}(\T)}\leq \Vert W \theta^0\Vert_{L^{\infty}(\TT)}+\Vert  \theta^0\Vert_{\mathscr C^2(\T)}+\Vert {F}(0,\cdot)\Vert_{L^{\infty}(\TT)}.
\]
It remains to bound
\[\Vert \partial_t{F}+\theta\partial_t W\Vert_{{L^{2k}(\TT)}}.\]
To control this term we use the arithmetic-geometric inequality to obtain
\begin{align*}\Vert \theta\partial_t W\Vert_{L^{2k}(\TT)}&\leq \frac12\Vert  \theta^2+\left(\partial_t W\right)^2\Vert_{L^{2k}(\TT)}\\&\leq \frac12\left(\Vert \theta\Vert_{L^{4k}(\TT)}^2+\Vert \partial_t W\Vert_{L^{4k}(\TT)}^2\right) \end{align*}
Thus, 
\[\sup_{t\in (0,T)} \Vert \partial_t\theta(t,\cdot)\Vert_{L^p(\T)}<C\] and the constant $C$ only depends on the $W^{1,p}(0,T;L^r(\T))$ norms of all the functions involved.
To obtain the uniform $W^{2,p}$ estimate, we simply observe that for a.e. $t\in (0,T)$ $\theta(t,\cdot)$ solves 
\[-{\partial^2_{xx}} \theta(t,\cdot)=G:={F}-\partial_t\theta+W\theta\] and to apply standard $W^{2,p}$ elliptic regularity estimates.

\end{proof}

We can now give the proof of Proposition \ref{Pr:Regularity}
\begin{proof}[Proof of Proposition \ref{Pr:Regularity}]
We observe that $u_m$ solves
$$\partial_t u_m-{\partial^2_{xx}}u_m-m u_m=f(t,x,u_m).$$ Now define
\[q:=\partial_t u_m.\] As $m$ does not depend on time, we obtain the following equation on $q$:
\[
\begin{cases}
{\partial_t} q-\partial^2_{xx} q-\left(m+{\partial_uf}(t,x,u_m)\right)q={\partial_t}f(t,x,u_m)&\text{ in }\TT
\\ q(0,\cdot)={\partial^2_{xx}} u^0+mu^0+f(0,x,u^0).
\end{cases}\]
As $u_m\leq K$, $\partial_uf(t,x,u_m)\,, \partial_tf(t,x,u_m)\in L^\infty(\TT)$. From Lemma \ref{Le:Lp} we obtain, for any $p\in [1;+\infty)$, 
\[
\sup_{t\in [0,T]}\Vert \partial_t u_m\Vert_{L^p(\T)}+\sup_{t\in[0,T]}\Vert u_m\Vert_{W^{2,p}(\T)}<\infty.
\]

In the same way, we derive the desired estimate on $p_m$.
\end{proof}

\subsection{Proof of lemma \ref{Le:EstPot}}\label{Ap:Pa}
\begin{proof}[Proof of lemma \ref{Le:EstPot}]
Multiplying the equation by $\theta$, integrating by parts in space and using the fact that $V\in L^\infty(\TT)$, there exists a constant $M>0$ such that
\begin{align*}
\frac12\frac{d}{dt} \int_\T \theta^2+\int_\T|{\partial_x} \theta|^2-\frac{M}{2}\int_\T \theta^2&\leq \frac12\frac{d}{dt} \int_\T \theta^2+\int_\T|{\partial_x} \theta|^2-\int_\T V\theta^2
\\&=\int_\T \theta \partial_x f+\int_\T  \theta q g
\\&=-\int_\T f\partial_x \theta+\int_\T \theta q g
\\&\leq \int_\T \frac{f^2+(\partial_x\theta)^2}2+\frac{\Vert q\Vert_{L^\infty}}2\int_\T g^2+\frac{\Vert q\Vert_{L^\infty}}2\int_\T \theta^2.
\end{align*}
Thus, there exists two constants $M'\,, M''$ such that
$$\frac{d}{dt} \int_\T \theta^2+\int_\T|{\partial_x} \theta|^2-{M'}\int_\T \theta^2\leq M''\left(\int_\T f^2(t,\cdot)+\int_\T g^2(t,\cdot)\right),$$ so that, for a.e. $t$, we have
$$e^{-Mt}\int_\T \theta^2(t,\cdot)+\int_0^t e^{-M\tau}\int_\T |{\partial_x} \theta|^2\leq M''\int_0^t e^{-M\tau}\left(\int_\T f^2(\tau,\cdot)+\int_\T g^2(\tau,\cdot)\right)d\tau.$$ The conclusion follows by the using the inequality  $e^{-MT}\leq e^{-Mt}\leq 1.$
\end{proof}

\end{document}